\DeclareMathOperator{\im}{im}
\DeclareMathOperator{\Ind}{Ind}
\DeclareMathOperator{\Nul}{Nul}
\DeclareMathOperator{\supp}{supp}
\DeclareMathOperator{\tr}{tr}
\renewcommand{\C}{\mathbb C}
\renewcommand{\Re}{\mathop{\mathrm{Re}}\nolimits}
\newcommand{\B}{\mathbb B}
\newcommand{\E}{\mathbb E}
\newcommand{\R}{\mathbb R}
\newcommand{\Z}{\mathbb Z}
\newcommand{\TC}{\mathrm{TC}}
\title{On the Index of Fraser-Sargent-type minimal surfaces}
\author{Vladimir Medvedev \and Egor Morozov}
\date{}
\theoremstyle{definition}
\theoremstyle{plain}
\newtheorem{theorem}{Theorem}[section]
\newtheorem{corollary}[theorem]{Corollary}
\newtheorem{proposition}[theorem]{Proposition}
\newtheorem{claim}[theorem]{Claim}
\newtheorem{conjecture}[theorem]{Conjecture}
\theoremstyle{remark}
\newtheorem{remark}{Remark}
\begin{document}

\maketitle

\begin{abstract}
Fraser-Sargent surfaces are free boundary minimal surfaces in the four-dimensional unit Euclidean ball. Extended infinitely they define immersed minimal surfaces in the Euclidean space. In the present paper we compute the Morse index and the nullity of  these extended minimal surfaces. The parts of these surfaces outside the ball are exterior free boundary minimal surfaces. We provide a numerical evidence that they are stable. As a corollary of these results we obtain a lower bound on the index of Fraser-Sargent surfaces inside the ball. The obtained lower bound is not sharp. We provide computational experiments and state a conjecture about an improved index lower bound. Independently of it we also find an upper bound on the index of Fraser-Sargent surfaces inside the ball.
\end{abstract}

\section{Introduction}

\subsection{Overview and main results}\label{sec:intro-ov}

Let $\Sigma$ be a surface and $(M,g)$ a Riemannian manifold. An immersion $u\colon \Sigma \to (M,g)$ is called \textit{minimal} if it is a critical point of the area functional 
$$
Area[u]=\int_\Sigma dA.
$$
In this paper we always identify $\Sigma$ with its image $u(\Sigma)$ in $M$ and say that $\Sigma$ is a \textit{minimal surface} in $(M,g)$. Equivalently, one can say that the surface $\Sigma$ is minimal in $(M,g)$ if its mean curvature vanishes. The equivalence of these two definitions immediately follows from the Euler-Lagrange equation for the area functional. In the case when $\Sigma$ and $M$ have non-empty boundaries we say that $\Sigma$ is a \textit{free boundary minimal surface} (\textit{FBMS} for short) in $(M,g)$ if it is a critical point of the area functional among all variations leaving the boundary of $\Sigma$ on the boundary of $M$. We always assume that an FBMS is \emph{properly embedded}, i.e. $\partial\Sigma=\partial M\cap \Sigma$. If $\Sigma$ is an FBMS, then the minimality condition implies that it has zero mean curvature and it meets the boundary $\partial M$ orthogonally.

One of the central problems of the theory of minimal surfaces is their classification for a given ambient Riemannian manifold. In this paper we consider the classification by the \textit{Morse index}. Informally speaking, the Morse index (or simply the index) of a minimal surface is the maximal number of linearly independent infinitesimal variations decreasing the area of the surface up to the second order. If there are no such variations, then we say that our minimal surface is \textit{stable} in $(M,g)$. 

In this paper we mostly consider two ambient Riemannian manifolds: the $n-$dimensional Euclidean space $\E^n$ and $\E^n\setminus\mathring\B^n$, which is the complement to the open unit ball $\mathring\B^n$ in $\E^n$. The minimality condition in these spaces implies that $\Sigma$ cannot be compact. Following the terminology of the paper~\cite{mazet2022free} we call the FBMS in $\E^n\setminus\mathring\B^n$ the \textit{exterior free boundary minimal surfaces} (\textit{EFBMS} for short). We also consider some applications to the theory of FBMS in $\B^n$.

The study of minimal surfaces and their indices in $\mathbb E^n$ has a long history. Without claiming to be complete, we mention only some results related to our current work.

\medskip

1) \textit{Finite total Gauss curvature.} In this paper we consider only minimal surfaces in $\mathbb E^n$ of finite total Gauss curvature. The classical theorem of Fisher-Colbrie and Nayatani (\cite{fischer1985complete,nayatani1990morse}) states that the finiteness of the total Gauss curvature implies the finiteness of the index (in fact, in $\E^3$ both conditions are equivalent). By the classical theorem of Huber and Osserman (\cite{huber1958subharmonic,osserman2013survey}) a minimal surface of finite total Gauss curvature is conformally equivalent to a compact Riemannian surface with finite number of punctures called ends. The genus of a minimal surface of finite total Gauss curvature is defined as the genus of this Riemann surface.  

\medskip

2) \textit{General index estimates.} To the best of our knowledge the most general index estimate which works for any Riemannian manifold was obtained by Ejiri and Micallef in the paper~\cite{ejiri2008comparison} (see Theorem~1.1). Using this estimate the authors obtained in the same paper (see Theorem~1.2) that for an orientable minimal surface $\Sigma$ in $\E^n$ one has
$$
\Ind(\Sigma)\leqslant\frac{1}{\pi}\int_\Sigma(-K)\,dA+2\gamma-2,
$$
where $\Ind(\Sigma)$ denotes the index of $\Sigma$, $K$ is the Gauss curvature of $\Sigma$, and $\gamma$ is the genus of $\Sigma$. In this paper we use the notation $\TC(\Sigma):=\int_\Sigma K\,dA$ for the total Gauss curvature. Sometimes we call $\TC(\Sigma)$ simply the \textit{total curvature}. 

In $\E^3$ the previous estimate can be improved as
$$
 \Ind(\Sigma) \leqslant -\frac{1}{\pi}\TC(\Sigma)+2\gamma-3.
$$
Moreover, Chodosh and Maximo proved in~\cite{chodosh2018topology} that in this case there exists the following lower bound
$$
 \Ind(\Sigma) \geqslant \frac{1}{3}\left(2\gamma+2\sum_{i=1}^r(d_i+1)-5\right),
$$
where $r$ is the number of ends and $d_i$ is the multiplicity of the $i-$th end. In the same paper the authors obtained the lower bound on the index of a non-orientable minimal surface in $\E^3$:
$$
 \Ind(\Sigma) \geqslant \frac{1}{3}\left(\gamma+2\sum_{i=1}^r(d_i+1)-4\right).
$$

\medskip

3) \textit{Some known small indices.} It's quite obvious that the plane in $\E^n$ is stable. It is the only orientable stable minimal surface in $\E^3$ (\cite{fischer1980structure,do1979stable,pogorelov1981stability}). In $\E^n$ with $n>3$ there exist other orientable stable minimal surfaces. For example, any holomorphic curve in $\E^4$ is automatically stable. Without requiring orientability and regularity, the Henneberg surface is stable (\cite{henneberg1875ueber}). Ros proved in~\cite{ros2006one} that there are no immersed non-orientable stable minimal surfaces in $\E^3$. The catenoid and Enneper's surface have index one (see for example~\cite{fischer1985complete,cheng1988index,tuzhilin1992morse}). They are unique immersed minimal surfaces of index one in $\E^3$ (\cite{lopez1989complete}). There are no immersed non-orientable minimal surfaces of index one in $\E^3$ (\cite{chodosh2018topology}). Also, there are no immersed orientable minimal surfaces in $\E^3$ of index 2 (\cite{chodosh2018topology}). The Chen-Gackstatter surfaces and the Richmond surface have index 3 (\cite{montiel2006schrodinger,tuzhilin1992morse}). These surfaces are immersed. There are no embedded orientable minimal surfaces in $\E^3$ of index 3 (\cite{chodosh2018topology}). The Costa-Hoffman-Meeks surface of genus $\gamma$ has index $2\gamma+3$ (\cite{nayatani1992morse,morabito2009index}) and the Jorge-Meeks $(k+1)-$ catenoid has  has index $2k+1$ (see \cite[Corollary 15 and the discussion below]{montiel2006schrodinger}). Finally, the Bryant minimal surface (\cite{bryant1984duality,rosenberg1986some}) has index 4 (see for example \cite[Corollary 26]{montiel2006schrodinger}).

\medskip

In the present paper we are interested in the following family of immersed minimal surfaces in $\E^4$. Fix a pair of relatively prime integers $(k,l)$ such that $k>l>0$ and let $T_{k,l}$ be the unique positive solution of the equation $k\tanh(kt)=l\coth(lt)$.
Then the image of the map
\begin{equation}\label{eq:intro-param1}
u_{k,l}(t,\theta)=
\frac{1}{r_{k,l}}(
k\sinh lt\cos l\theta,
k\sinh lt\sin l\theta,
l\cosh kt\cos k\theta,
l\cosh kt\sin k\theta
),\quad
t\in\R,\;\theta\in\R/2\pi\Z,
\end{equation}
where $r_{k,l}=\sqrt{k^2\sinh^2(lT_{k,l})+l^2\cosh^2 (kT_{k,l})}$, is a minimal surface in $\E^4$.
We call this surface a \emph{Fraser-Sargent surface} and denote it by $FS_{k,l}$. Some particular cases of them were studied in the papers~\cite{de1986some,mira2006complete,alarcon2020new,alarcon2021minimal}.
If $k$ is even (and hence $l$ is odd), then the map $u$ is invariant under the involution $(t,\theta)\mapsto (-t,\theta+\pi)$.
Thus, topologically $FS_{k,l}$ are M\"obius bands for even $k$ and cylinders for odd $k$.
If $k$ is even, then let $\widetilde{FS}_{k,l}$ be the orientable two-sheeted cover of $FS_{k,l}$. If $k$ is odd, then we let $\widetilde{FS}_{k,l}=FS_{k,l}$.

Note that $FS_{k,l}$ meets $\partial\B^4$ orthogonally due to the choice of constants.
Consider the image of the restriction of the map $u$ in~\eqref{eq:intro-param1} on $[-T_{k,l},T_{k,l}]\times(\R/2\pi\Z)$. We call it an \emph{interior Fraser-Sargent surface} and denote it by $IFS_{k,l}$.
These surfaces were introduced in~\cite{fan2015extremal} and studied in more details by Fraser and Sargent in the paper~\cite{fraser2021existence}.
For example, they proved that Fraser-Sargent FBMS are the only $\mathbb S^1-$invariant FBMS in $\B^4$.
Topologically, $IFS_{k,l}$ are bounded M\"obius bands for even $k$ or annuli for odd $k$.
Finally, define $\widetilde{IFS}_{k,l}$ similarly to $\widetilde{FS}_{k,l}$.



There are other parametrizations of $FS_{k,l}$.
First one can replace a pair of integer parameters $k,l$ by a single rational parameter $p=k/l$ by the substitution $t\mapsto \frac{t}{l},\theta\mapsto\frac{\theta}{l}$.
Then $FS_{k,l}$ is the image of map
\begin{equation}\label{eq:intro-param2}
u_p(t,\theta)=
\frac{1}{r_p}(
p\sinh t\cos\theta,
p\sinh t\sin\theta,
\cosh pt\cos p\theta,
\cosh pt\sin p\theta
),\quad
t\in\R,\;\theta\in\R/2\pi l\Z,
\end{equation}
where $r_p=\sqrt{p^2\sinh^2 T_p+\cosh^2 (p T_p)}$ and $T_p=l T_{k,l}$ (so that $T_p$ is the unique positive solution of the equation $p\tanh(pt)=\coth t$).

Finally, introducing the new variable $\zeta=e^t(\cos\theta+i\sin\theta)$, one obtains another useful parametrization for $FS_{k,l}$
\begin{equation}\label{eq:intro-param3}
v_{k,l}(\zeta)=
\frac{r}{2}\Re\left(
k\left(\zeta^l-\frac{1}{\zeta^l}\right),
-ik\left(\zeta^l+\frac{1}{\zeta^l}\right),
l\left(\zeta^k+\frac{1}{\zeta^k}\right),
-il\left(\zeta^k-\frac{1}{\zeta^k}\right)
\right),\quad
\zeta\in\C^*,
\end{equation}
cf.~\cite[\S2.8.11]{alarcon2021minimal} for the case $k=2,l=1$.
This parametrization can be used to compute the total curvature of $FS_{k,l}$.
Indeed, the same argument as in~\cite[\S2.8.11]{alarcon2021minimal} shows that
\begin{gather*}
\TC(\widetilde{FS}_{k,l})=-4\pi k,\quad\text{and hence}\quad
\TC(FS_{k,l})=\begin{cases}
-4\pi k,&\text{$k$ is odd,}\\
-2\pi k,&\text{$k$ is even.}
\end{cases}
\end{gather*}

Thus the index of $FS_{k,l}$ is finite. Now we are ready to state our main result.

\begin{theorem}\label{th:fs-ind}
Let $(k,l)$ be a pair of relatively prime integers such that $k>l>0$. Then 
$$
\Ind(FS_{k,l})=\begin{cases}
2k-1,&\text{$k$ is odd,}\\
k-1,&\text{$k$ is even,}
\end{cases}\quad\text{and}\quad
\Nul(FS_{k,l})=4.
$$
\end{theorem}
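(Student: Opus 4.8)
The plan is to reduce everything to the orientable double cover $\widetilde{FS}_{k,l}$ and to a single explicit spectral problem that the $\Sph^1$-symmetry turns into ordinary differential equations. The Jacobi operator of a minimal surface in $\E^4$ acts on sections of the rank-two normal bundle $N\Sigma$ as $L=-\Delta^\perp-\mathcal B$, where $\Delta^\perp$ is the rough Laplacian of the normal connection and $\mathcal B$ is the term built from the second fundamental form; $\Ind$ and $\Nul$ are the numbers of negative and zero $L^2$-eigenvalues of $L$, and they are finite because $\TC(\widetilde{FS}_{k,l})=-4\pi k$ is finite, so $\Sigma$ conformally compactifies. For odd $k$ one has $\widetilde{FS}_{k,l}=FS_{k,l}$; for even $k$ the deck involution $\iota\colon(t,\theta)\mapsto(-t,\theta+\pi)$ satisfies $u_{k,l}\circ\iota=u_{k,l}$, so pulling back identifies the eigenfields on the M\"obius band $FS_{k,l}$ with the $\iota$-invariant eigenfields on the cover. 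Hence it suffices to compute the $\iota$-equivariant spectrum of $L$ on $\widetilde{FS}_{k,l}$, remembering that $\iota$ acts on $N\Sigma$ not only through $\theta\mapsto\theta+\pi$ but also by a reflection of the normal planes, which is exactly what makes the quotient non-orientable.

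First I would make $L$ explicit from the Weierstrass parametrization~\eqref{eq:intro-param3}. Writing the induced metric as $\lambda^2(t)\,(dt^2+d\theta^2)$ and differentiating $v_{k,l}$, the two components of the generalized Gauss map, valued in the two factors of the oriented Grassmannian $\widetilde G_2(\R^4)\cong\Sph^2\times\Sph^2=\mathbb{CP}^1\times\mathbb{CP}^1$, come out as the monomials $\zeta^{k-l}$ and $\zeta^{k+l}$; their degrees sum to $2k$, which recovers $\TC=-4\pi k$. Substituting this data into $\mathcal B$ expresses the potential as explicit hyperbolic functions of $t$ and presents $N\Sigma$ as a Hermitian line bundle on which $L$ becomes a concrete Schr\"odinger-type operator.

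Next I would separate variables. Since $L$ commutes with the $\Sph^1$-action $\theta\mapsto\theta+\mathrm{const}$, I decompose normal fields into Fourier modes $e^{im\theta}$, $m\in\Z$, on each of which $L$ reduces to a system of ordinary differential operators in $t\in\R$; after diagonalizing the normal connection this becomes a pair of scalar P\"oschl--Teller operators whose effective frequencies are shifted by the Gauss-map degrees $k\pm l$, so the negative- and zero-eigenvalue problems can be solved in closed form. For each $m$ I would count the $L^2$ solutions of negative and of zero eigenvalue, deciding $L^2$-membership from the decay as $t\to\pm\infty$ at the two ends, and record the $\iota$-parity through $e^{im\theta}\mapsto(-1)^m e^{im\theta}$ together with the normal reflection. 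Summing the negative counts over $m$ should give $\Ind(\widetilde{FS}_{k,l})=2k-1$, the negative eigenvalues being carried by the modes $|m|\leqslant k-1$. For the nullity I would match the zero modes with geometry: the four translations of $\E^4$ give bounded Jacobi fields living in the modes $m=\pm l$ and $m=\pm k$ (translations in the $(x_1,x_2)$- and $(x_3,x_4)$-planes, respectively), so that mode $\pm l$ carries a zero eigenvalue in addition to its negative one; a check that there are no further bounded Jacobi fields then yields $\Nul(\widetilde{FS}_{k,l})=4$. Finally, for even $k$ I pass to $\iota$-invariants: the four translation fields are $\iota$-invariant because $\iota$ fixes the image pointwise, so the nullity stays $4$, while the invariant part of the negative spectrum has dimension $k-1$, giving $\Ind(FS_{k,l})=k-1$.

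The main obstacle is the per-mode spectral count together with the equivariant bookkeeping. For each $m$ one must determine the exact number of $L^2$ negative and zero eigenvalues of the reduced system, which rests on a careful threshold and asymptotic analysis at the two ends to decide which closed-form P\"oschl--Teller solutions are genuinely $L^2$ (or merely bounded, for the nullity); and one must correctly implement the reflection part of the $\iota$-action on the normal line bundle, since it is precisely this reflection that splits the cover index $2k-1$ into the invariant value $k-1$ and its complement. Once these two points are handled, the remaining ingredients—the explicit metric, Gauss maps, potentials, and P\"oschl--Teller eigenfunctions—are routine.
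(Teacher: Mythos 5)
Your overall skeleton (pass to the orientable cover, separate variables in $\theta$, use the translation Jacobi fields, then do equivariant bookkeeping for even $k$) matches the paper's lower-bound argument, but the load-bearing step of your plan fails. You claim that within each Fourier mode, ``after diagonalizing the normal connection'', the Jacobi operator becomes a pair of decoupled scalar P\"oschl--Teller operators solvable in closed form. This cannot work: the normal connection of $FS_{k,l}$ is not flat (its connection form is $-b(t)\,d\theta$ with $b(t)$ non-constant, see Proposition~\ref{pr:jac-deltabot}), and the Simons operator is not a multiple of the identity (its eigenvalues $2a(t)^2\neq 2c(t)^2$, Proposition~\ref{pr:jac-simons}); these two structures are diagonal in \emph{different} frames and cannot be diagonalized simultaneously. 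Concretely, the mode-$m$ problem is the coupled $2\times 2$ matrix Sturm--Liouville system~\eqref{eq:fs-sl} with potential~\eqref{eq:fs-pot}, whose off-diagonal entry $-2mb(t)$ cannot be removed by a $t$-independent rotation (a $t$-dependent one introduces first-order terms), and equivalently, in the complexified normal bundle the Simons term couples $N^{1,0}$ to $N^{0,1}$ with coefficient proportional to $a(t)^2-c(t)^2\neq 0$. No closed-form solution of this system is available, and the paper's own remarks (e.g.\ that $\lambda_i(m)$ need not be monotone in $m$, unlike scalar problems) underline that it behaves genuinely unlike a scalar Schr\"odinger operator.

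This gap is fatal to exactly the half of the theorem your plan does not otherwise address: the \emph{upper} bound. Your per-mode count is supposed to show that no mode $m\geqslant k$ carries a negative eigenvalue, that each mode $m\leqslant k-1$ carries exactly one, and that there are no bounded Jacobi fields beyond the four translations; all of this was to follow from the (nonexistent) closed-form solutions. The paper proves none of these facts mode-by-mode. Instead it establishes the global bound $\Ind(\widetilde{FS}_{k,l})+\Nul(\widetilde{FS}_{k,l})\leqslant 2k+3$ (Proposition~\ref{pr:fs-ub}) by an entirely different mechanism: the Ejiri--Micallef holomorphic-section argument on the Huber--Osserman compactification, refined by the antiholomorphic isometry $\sigma(t,\theta)=(-t,\theta)$ so as to halve the naive bound $4k+2$ of~\cite{ejiri2008comparison}. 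Combined with the variational lower bound $\Ind(\widetilde{FS}_{k,l})\geqslant 2k-1$ (Proposition~\ref{pr:fs-lb}, which uses the translation profiles as test functions, very much in the spirit of your plan) and $\Nul\geqslant 4$, this pinches both the index and the nullity simultaneously; in particular the statement $\Nul=4$ is \emph{deduced} from the pinching rather than from a classification of bounded zero modes, which your approach would have to carry out by hand. To repair your proposal you would need either a genuine proof of exact solvability of the coupled system~\eqref{eq:fs-sl} (unlikely), or an independent argument bounding the total number of negative and zero modes from above --- which is precisely the complex-geometric input your proposal is missing.
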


Here $\Nul(FS_{k,l})$ denotes the nullity of $FS_{k,l}$. Roughly speaking, the nullity is the maximal number of linearly independent variations of $FS_{k,l}$ on which the second variation of the area functional vanishes. 

\medskip

The second part of the paper is dedicated to the study of EFBMS. To the best of our knowledge the study of EFBMS was initiated in the paper~\cite{mazet2022free} in dimension 3. In this paper the authors construct a one-parametric family of catenoidal EFBMS and compute their indices. Depending on the parameter the index is either zero or one. 

We study the following EFBMS. Consider the map $u$ in~\eqref{eq:intro-param1}. The image of $[T_{k,l},+\infty)$ under this map is an EFBMS in $\E^4\setminus\mathring\B^4$.
We call this EFBMS an \emph{exterior Fraser-Sargent surface} and denote it by $EFS_{k,l}$. It is not hard to see that the Fisher-Colbrie-Nayatani theorem holds  for EFBMS , i.e. the finiteness of the total curvature implies the finiteness of the index (see for example Theorem~\ref{th:efbms-ub} below). The total curvature of $EFS_{k,l}$ can be easily found:

$$
\TC(EFS_{k,l})=\frac{1}{2}\bigl(\TC(\widetilde{FS}_{k,l})-\TC(\widetilde{IFS}_{k,l})\bigr)=-2\pi k, 
$$
since $\TC(\widetilde{IFS}_{k,l})=0$ by the Gauss-Bonnet theorem. Therefore the index of $EFS_{k,l}$ is finite. Unfortunately, we were not able to compute the index of $EFS_{k,l}$ rigorously. However we state the following conjecture

\begin{conjecture}\label{con:efs-stab}
For $k,l$ as in Theorem~\ref{th:fs-ind} we have $\Ind(EFS_{k,l})=0$, i.e. all the surfaces $EFS_{k,l}$ are stable.
\end{conjecture}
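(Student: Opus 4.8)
The plan is to show that the second variation (index) form of $EFS_{k,l}$ is nonnegative on all admissible variations. For a minimal surface in $\E^4$ the stability form acts on sections $V$ of the rank-two normal bundle, and since the ambient space is flat and the free boundary $\partial\Sigma$ lies on the totally umbilic sphere $\partial\B^4$, it reads
$$
Q(V,V)=\int_{EFS_{k,l}}\Bigl(|\nabla^\perp V|^2-\textstyle\sum_{i,j}\langle A_{ij},V\rangle^2\Bigr)\,dA+\int_{\partial\Sigma}|V|^2\,ds.
$$
The crucial structural point is the sign of the boundary term: the outward unit normal of the exterior domain $\E^4\setminus\mathring\B^4$ along $\partial\B^4$ points toward the center, so the second fundamental form of $\partial\B^4$ with respect to it equals $-\langle\cdot,\cdot\rangle$, and the resulting boundary contribution is $+\int_{\partial\Sigma}|V|^2\,ds\ge0$. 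This is exactly opposite to the destabilizing boundary term of the interior ball problem, and it is the mechanism that should force stability. Using the $\Sph^1$-action $(t,\theta)\mapsto(t,\theta+\alpha)$, under which $Q$ is invariant, I would decompose $V$ into Fourier modes $e^{im\theta}$ in the frame already set up in the proof of Theorem~\ref{th:fs-ind}; the operator then splits into a family of one-dimensional (Sturm--Liouville type) problems on $[T_{k,l},\infty)$, each carrying a Robin condition at $t=T_{k,l}$ coming from the positive boundary term and an $L^2$/finite-energy condition at the end $t\to\infty$. Stability is then equivalent to nonnegativity of every mode operator.

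To certify nonnegativity mode by mode I would look for a positive Jacobi field. If $\phi>0$ solves $L\phi=0$ (or $L\phi\ge0$) in the interior and satisfies the Robin condition with the correct sign at $t=T_{k,l}$, then the substitution $V=\phi W$ turns $Q$ into $\int\phi^2|\nabla W|^2\,dA$ plus a nonnegative boundary remainder, which is manifestly $\ge0$. Natural candidates for such fields are the normal projections of the Killing fields of $\E^4$ (the two planar rotations and the translations) together with the dilation field coming from the position vector $\langle x,\nu\rangle$; by \eqref{eq:intro-param1} these can be written explicitly and sorted by their $\theta$-frequency, so that each geometric field feeds a specific Fourier mode. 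For all but finitely many $m$ the Rayleigh quotient is dominated by the gradient term $|\nabla^\perp V|^2$ (whose angular part grows like $m^2$) while the Simons term $\sum_{i,j}\langle A_{ij},V\rangle^2$ decays along the end because of finite total curvature; together with the stabilizing boundary term this should make the high modes nonnegative directly, and one may moreover use the reflection symmetry $t\mapsto-t$ and the known index of the complete surface $FS_{k,l}$ from Theorem~\ref{th:fs-ind} to locate the only possibly dangerous modes among small $|m|$.

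The main obstacle is precisely these few low modes. There the explicit geometric Jacobi fields typically either fail the Robin boundary condition at $t=T_{k,l}$ or change sign on $[T_{k,l},\infty)$, so one cannot read off a positive supersolution, and one is left to analyze the one-dimensional eigenvalue problem by hand. This amounts to balancing the negative potential $-\sum_{i,j}\langle A_{ij},\cdot\rangle^2$, concentrated near the neck $t=T_{k,l}$, against the positive boundary term, tested on functions that decay at infinity; making this competition into a clean inequality requires sharp control of $T_{k,l}$ (the root of $k\tanh(kt)=l\coth(lt)$) and of the curvature profile along the end, which I have not been able to reduce to closed form. This is why the statement is only a conjecture: the finiteness of $\Ind(EFS_{k,l})$ already follows from Theorem~\ref{th:efbms-ub}, the stabilizing boundary term makes vanishing the expected outcome, and nonnegativity of every mode can be verified numerically, but a rigorous certificate for the low modes is still missing.
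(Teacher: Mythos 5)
Your outline reproduces the skeleton of the paper's argument (positive Robin boundary term, $\Sph^1$-separation into the matrix Sturm--Liouville problems of Proposition~\ref{pr:efs-sep}, positivity certified by a non-vanishing solution as in Proposition~\ref{pr:efs-oscil}), but it stops exactly where the real work begins, and the two ideas that let the paper get past that point are missing. First, the paper never analyzes the modes one by one: writing the potential \eqref{eq:fs-pot} as a sum of squares, one gets
$$
Q_{m,T}[h]=Q_T^{(1)}[h_1]+Q_T^{(2)}[h_2]+\int_{T_p}^T\bigl((b(t)h_1-mh_2)^2+(b(t)h_2-mh_1)^2\bigr)\,dt ,
$$
so all of the $m$-dependence (including the off-diagonal term $-2mb(t)$, which grows linearly in $m$ and makes your ``high modes are dominated by $m^2$'' claim delicate) is absorbed into a manifestly nonnegative term; since $a(t)^2>c(t)^2$, stability of \emph{every} mode reduces to positivity of the single scalar form $Q_T^{(1)}$ with potential $-2a(t)^2$. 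Second, instead of hunting for a positive Jacobi field built from Killing/dilation fields (which, as you yourself note, fail the Robin condition or change sign), the paper certifies disconjugacy by Sturm comparison with the explicitly solvable auxiliary problem $\tilde g''+\tfrac29\bigl(t-\tfrac23\bigr)^{-2}\tilde g=0$, whose solution $\bigl(t-\tfrac23\bigr)^{1/3}$ never vanishes; the required pointwise potential bound is then massaged, using $(p-1)T_p<T_\infty<pT_p$ and the monotonicity of $a/\cosh(at)$, into the single two-variable inequality \eqref{eq:efs-fineq}, which is uniform in $(k,l)$ and is the only step checked numerically.

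This difference is not cosmetic. Your fallback --- ``nonnegativity of every mode can be verified numerically'' --- cannot even deliver a computer-assisted proof of the conjecture as stated, because it would require a separate spectral computation for each mode $m$ of each of the infinitely many surfaces $EFS_{k,l}$, whereas the paper's chain of reductions compresses the whole family into one elementary inequality. Also, your side remark that the known index of $FS_{k,l}$ from Theorem~\ref{th:fs-ind} could be used to ``locate the dangerous modes'' of the exterior piece has no justification: the only available splitting result, Proposition~\ref{pr:ifs-split}, gives $\Ind(I\Sigma)+\Ind(E\Sigma)\geqslant\Ind(\Sigma)$, which bounds the exterior index from \emph{below} in combination with the others, not from above, so nothing about $EFS_{k,l}$ can be read off from $\Ind(FS_{k,l})=2k-1$. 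The gap, concretely, is the absence of any mechanism that handles the low modes and all $(k,l)$ simultaneously; the sum-of-squares splitting plus the explicit comparison solution is that mechanism.
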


We provide a computer assisted proof of Conjecture~\ref{con:efs-stab} (see the discussion below).

\medskip

In the conclusion of this section we consider some applications of the obtained results to the theory of FBMS in $\B^n$. Let us briefly review some basic facts about FBMS in $\B^n$.

\medskip

1) \textit{General index estimates.} A general index bound which works for any compact FBMS and any ambient Riemannian manifold was proved by Lima in the paper~\cite{lima2017bounds}. In the case of $\B^n$ it reads
$$
\Ind_E(\Sigma)\leqslant \Ind(\Sigma)\leqslant \Ind_E(\Sigma)+\dim\mathcal M(\Sigma),
$$
where $\Ind_E(\Sigma)$ is the energy index of $\Sigma$ and $\mathcal M(\Sigma)$ is the moduli space of conformal classes on $\Sigma$. Ambrosio, Carlotto, Sharp in~\cite{ambrozio2018index} and Sargent in~\cite{sargent2017index} proved  in the case of $\B^3$ the following lower bound for orientable $\Sigma$
\begin{equation}\label{eq:intro-steklov}
\Ind(\Sigma)\geqslant \left[\frac{2\gamma+b+1}{3}\right],
\end{equation}
where $\gamma$ is the genus of $\Sigma$ and $b$ is the number of boundary components of $\Sigma$. This estimate implies that the index of an FBMS in $\B^3$ can be arbitrarily large.

\medskip

2) \textit{Relation to Spectral Geometry.} It is not hard to see that the components of an FBMS $u\colon \Sigma \to \B^n$ satisfy the following \textit{spectral Steklov problem}
$$
\begin{cases}\Delta v=0&\text{ in }\Sigma,\\
\partial_\eta v=v &\text{ on }\partial\Sigma,
\end{cases}
$$
where $\Delta$ is the Laplace-Beltrami operator on $\Sigma$ and $\eta$ is the outward unit normal to $\partial\Sigma$. In other words, $v$ is a \textit{Steklov eigenfunction with eigenvalue one}. The Steklov spectrum for compact $\Sigma$ consists of a discrete collection of eigenvalues with finite multiplicities. Fraser and Schoen in the paper~\cite{fraser2011first} gave a beautiful variational characterization of FBMS in $\B^n$ as \textit{extremal metrics} for the functional ``the $k-$th normalized Steklov eigenvalue''. Karpukhin and M\'etras in the paper~\cite{karpukhin2021laplace} gave a definition of the \textit{spectral index} of an FBMS in $\B^n$ as the number of Steklov eigenvalues less than one. Later, the first named author proved in~\cite{medvedev2023index} that
\begin{gather}\label{eq:intro-inds}
\Ind(\Sigma)\leqslant n\Ind_S(\Sigma)+\dim\mathcal M(\Sigma),
\end{gather}
where $\Ind_S(\Sigma)$ is the spectral index of $\Sigma$.

\medskip

3) \textit{Known indices.} The list of known results is quite short. Obviously, there are no stable FBMS in $\B^n$. It is not hard to see that the index of an equatorial disk in $\B^n$ is equal to $n-2$. The index of the critical catenoid is 4 (\cite{devyver2019index,tran2016index,smith2019morse,medvedev2023index}). Recently, it has been shown in~\cite{medvedev2023index} that the index of the critical M\"obius band is 5.

\medskip

The computation of the index of interior Fraser-Sargent surfaces is a challenging problem. Theorem~\ref{th:fs-ind} and Conjecture~\ref{con:efs-stab}  enable us to give the following lower bound 

\begin{corollary}\label{cor:ifs-lb}
For $k,l$ as in Theorem~\ref{th:fs-ind} we have
$$
\Ind(IFS_{k,l})\geqslant \begin{cases}
2k-1,&\text{$k$ is odd,}\\
k-1,&\text{$k$ is even.}
\end{cases}
$$
\end{corollary}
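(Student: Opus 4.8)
The plan is to exploit the splitting of the complete surface $FS_{k,l}$ into its bounded interior part $IFS_{k,l}$ and two unbounded exterior pieces, and to show that the second variation forms of these three pieces fit together with an exact cancellation of boundary terms. First I would record the decomposition $FS_{k,l}=IFS_{k,l}\cup EFS^+\cup EFS^-$ along the sphere $\partial\B^4$, where $EFS^+$ is the image of $[T_{k,l},+\infty)\times(\R/2\pi\Z)$ (so $EFS^+=EFS_{k,l}$) and $EFS^-$ is the image of $(-\infty,-T_{k,l}]\times(\R/2\pi\Z)$. The orthogonal transformation $(x_1,x_2,x_3,x_4)\mapsto(-x_1,-x_2,x_3,x_4)$ arising from $(t,\theta)\mapsto(-t,\theta)$ maps $EFS^-$ isometrically onto $EFS_{k,l}$, so both exterior pieces are stable by Conjecture~\ref{con:efs-stab}.

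The key step is an identity relating the index forms. Since $\E^4$ is flat, the second variation $Q_{FS}$ acting on normal sections $\phi$ has no boundary term, whereas the free boundary forms $Q^{FB}_{IFS}$ and $Q^{EFB}_{EFS^\pm}$ each carry a boundary contribution coming from the shape operator of the unit sphere. Because every piece meets $\Sph^3$ orthogonally, the normal directions of the surface are tangent to $\Sph^3$ along the boundary curves; consequently every normal variation is automatically admissible for the free boundary problem, and the shape operator of $\Sph^3$ acts as $\pm\mathrm{id}$ according to the choice of outward normal. I would check that this yields a boundary term $-\int_{\partial IFS_{k,l}}|\phi|^2\,ds$ for the interior piece and $+\int_{\partial EFS^\pm}|\phi|^2\,ds$ for the exterior pieces, the sign flipping because the outward normal of the exterior domain points into the ball. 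Splitting the integral defining $Q_{FS}$ over the three pieces and summing, the boundary terms cancel because $\partial IFS_{k,l}=\partial EFS^+\cup\partial EFS^-$ as curves on $\Sph^3$, giving
$$Q_{FS}(\phi)=Q^{FB}_{IFS}(\phi|_{IFS_{k,l}})+Q^{EFB}_{EFS^+}(\phi|_{EFS^+})+Q^{EFB}_{EFS^-}(\phi|_{EFS^-}).$$

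With this identity in hand I would finish by a restriction argument. Since $FS_{k,l}$ has finite total curvature its index is finite, and I would choose a subspace $W$ of decaying $L^2$ Jacobi sections with $\dim W=\Ind(FS_{k,l})$ on which $Q_{FS}$ is negative definite; these restrict to admissible normal variations on the compact surface $IFS_{k,l}$. For nonzero $\phi\in W$ the stability $Q^{EFB}_{EFS^\pm}\geqslant 0$ forces $Q^{FB}_{IFS}(\phi|_{IFS_{k,l}})\leqslant Q_{FS}(\phi)<0$, and the same inequality shows that $\phi|_{IFS_{k,l}}=0$ would entail $Q_{FS}(\phi)\geqslant 0$, whence the restriction map is injective on $W$. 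Thus $Q^{FB}_{IFS}$ is negative definite on a space of dimension $\Ind(FS_{k,l})$, so $\Ind(IFS_{k,l})\geqslant\Ind(FS_{k,l})$, which is the stated bound by Theorem~\ref{th:fs-ind}.

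The hard part will be the boundary-term bookkeeping in the index identity: one must determine the sign and magnitude of the boundary contributions for the interior and exterior free boundary problems, via the orthogonal intersection with $\Sph^3$ and the opposite outward-normal conventions, precisely enough that they cancel. A secondary technical point, handled by taking $W$ to consist of decaying $L^2$ eigensections of the Jacobi operator, is ensuring that the destabilizing variations of the complete surface restrict to legitimate test sections for the compact free boundary problem on $IFS_{k,l}$.
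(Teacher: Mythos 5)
Your overall strategy coincides with the paper's: there the corollary is deduced from Proposition~\ref{pr:ifs-split}, whose proof is exactly your splitting identity --- the second variation of the complete surface equals the interior free boundary form (carrying $-\int_{\partial I\Sigma}|X|^2\,dL$) plus the exterior form (carrying $+\int_{\partial E\Sigma}|X|^2\,dL$), the boundary contributions canceling because the pieces share the same boundary curves --- followed by the observation that stability of the exterior part (Conjecture~\ref{con:efs-stab}) forces all the negativity into the interior piece, with Theorem~\ref{th:fs-ind} supplying the numbers $2k-1$ and $k-1$. Your sign bookkeeping and the admissibility remark (normal fields are automatically admissible because the surface meets $\Sph^3$ orthogonally) are correct. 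One slip: for even $k$ the involution $(t,\theta)\mapsto(-t,\theta+\pi)$ identifies the two exterior ends, so $FS_{k,l}$ is split by $\B^4$ into only \emph{two} pieces, $IFS_{k,l}$ and a single copy of $EFS_{k,l}$; your three-piece identity double-counts the exterior in that case, though the two-piece version restores the argument.

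The genuine gap is in your final restriction step. You run the argument on a space $W$ of global ``decaying $L^2$ eigensections'' of the Jacobi operator with $\dim W=\Ind(FS_{k,l})$, and you apply the stability of $EFS^{\pm}$ to the restrictions $\phi|_{EFS^{\pm}}$. Neither point is justified as stated: (i) on a complete noncompact surface the index is by definition a supremum over bounded domains, and producing global negative eigensections realizing it requires the spectral theory of $L$ on the Huber--Osserman compactification (Fischer-Colbrie, Nayatani, Ejiri--Micallef); moreover such sections are merely bounded along the ends and need not decay, so they are not obviously legitimate test objects; (ii) stability of $EFS_{k,l}$ is a statement about compactly supported variations, and extending the inequality $Q^{EFB}\geqslant 0$ to the non-compactly supported restrictions $\phi|_{EFS^{\pm}}$ requires a cutoff argument (e.g.\ logarithmic cutoffs on the finite-total-curvature end) together with control of $\phi$ at infinity --- this is precisely the ``secondary technical point'' you defer, and it is not automatic. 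The paper's proof of Proposition~\ref{pr:ifs-split} needs neither fact: it fixes an arbitrary bounded domain $\Omega\subset\Sigma$, takes the maximal subspace $V$ of compactly supported normal fields in $\Omega$ on which $\delta^2 A<0$, diagonalizes the quadratic form $X\mapsto\delta^2 A_E(X|_{E\Sigma})$ on the finite-dimensional space $V$ to write $V=U\oplus U'$, and notes that the restrictions of $U$ to $E\Sigma$ and of $U'$ to $I\Sigma$ are compactly supported negative test sections, giving $\Ind(\Omega)\leqslant\Ind(E\Sigma\cap\Omega)+\Ind(I\Sigma\cap\Omega)\leqslant\Ind(E\Sigma)+\Ind(I\Sigma)$; the supremum over $\Omega$ finishes. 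Replacing your global eigensections by this exhaustion-plus-diagonalization device closes the gap and turns your argument into the paper's.
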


In fact this corollary follows directly from the following proposition which could be of independent interest

\begin{proposition}\label{pr:ifs-split}
Let $\Sigma$ be a minimal surface in $\E^n$. Suppose that $\B^n$ splits it into two pieces $E\Sigma$ and $I\Sigma$ which are exterior FBMS and interior FBMS respectively. Then
$$
\Ind(I\Sigma)+\Ind(E\Sigma)\geqslant \Ind(\Sigma).
$$
\end{proposition}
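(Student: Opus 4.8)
The plan is to show that the second-variation (index) form of the whole surface $\Sigma$ splits \emph{exactly} as the sum of the free-boundary index forms of $I\Sigma$ and $E\Sigma$, and then to run a purely linear-algebraic counting argument. First I would record the index forms. For a section $V$ of the normal bundle $N\Sigma$ with appropriate support, the index form on $\Sigma$ (the ambient Ricci vanishing in $\E^n$) is
$$
Q_\Sigma(V)=\int_\Sigma\bigl(|\nabla^\perp V|^2-|A(V)|^2\bigr)\,dA,
$$
while for the two free-boundary pieces one obtains in addition a boundary term governed by the second fundamental form of $\Sph^{n-1}=\partial\B^n$:
$$
Q_{I\Sigma}(V)=\int_{I\Sigma}\bigl(|\nabla^\perp V|^2-|A(V)|^2\bigr)\,dA-\int_{\partial I\Sigma}|V|^2\,ds,
$$
$$
Q_{E\Sigma}(V)=\int_{E\Sigma}\bigl(|\nabla^\perp V|^2-|A(V)|^2\bigr)\,dA+\int_{\partial E\Sigma}|V|^2\,ds.
$$
A preliminary point to check is that the restrictions $V_I=V|_{I\Sigma}$ and $V_E=V|_{E\Sigma}$ of any normal section on $\Sigma$ are automatically \emph{admissible} test sections for the free-boundary problems: since $\Sigma$ meets $\partial\B^n$ orthogonally, the radial direction lies in $T\Sigma$ along $\partial\Sigma$, hence $N\Sigma\subset T\Sph^{n-1}$ there, so $V$ is automatically tangent to the sphere and no extra boundary constraint is imposed.

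The second step is the splitting identity. Since $\partial I\Sigma=\partial E\Sigma=\Sigma\cap\Sph^{n-1}$ is a single curve and $V$ is continuous across it, the two boundary integrals $\int_{\partial I\Sigma}|V|^2\,ds$ and $\int_{\partial E\Sigma}|V|^2\,ds$ coincide; crucially, they enter $Q_{I\Sigma}$ and $Q_{E\Sigma}$ with \emph{opposite} signs. This sign difference is exactly the statement that the second fundamental form of $\Sph^{n-1}$ with respect to the outward conormal of $\B^n$ equals $+\mathrm{Id}$, whereas with respect to the outward conormal of $\E^n\setminus\mathring\B^n$ (which points radially inward) it equals $-\mathrm{Id}$. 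Adding the two expressions, the boundary terms cancel and the bulk integrals recombine, giving
$$
Q_\Sigma(V)=Q_{I\Sigma}(V_I)+Q_{E\Sigma}(V_E)
$$
for every admissible normal section $V$ on $\Sigma$.

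The third step is the counting, for which I would use the elementary fact that for a symmetric bilinear form whose maximal negative-definite subspace has finite dimension $d$, the $Q$-orthogonal complement of that subspace has codimension at most $d$ and the form is non-negative on it (if some vector in the complement had negative $Q$-value, adjoining it would enlarge the negative-definite subspace, a contradiction). Applying this to $Q_{I\Sigma}$ and $Q_{E\Sigma}$ with $d=\Ind(I\Sigma)=:i$ and $d=\Ind(E\Sigma)=:e$ respectively, I obtain subspaces $Y_I,Y_E$ of admissible sections, of codimension at most $i$ and $e$, on which $Q_{I\Sigma}\geqslant0$ and $Q_{E\Sigma}\geqslant0$. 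Let $W$ be a maximal negative-definite subspace for $Q_\Sigma$, so $\dim W=\Ind(\Sigma)$. The linear map $V\mapsto([V_I],[V_E])$ into the quotient space of dimension at most $i+e$ must have a nonzero kernel as soon as $\Ind(\Sigma)>i+e$, producing $V\in W\setminus\{0\}$ with $V_I\in Y_I$ and $V_E\in Y_E$; then $Q_\Sigma(V)=Q_{I\Sigma}(V_I)+Q_{E\Sigma}(V_E)\geqslant0$, contradicting $Q_\Sigma|_W<0$. Hence $\Ind(\Sigma)\leqslant i+e$, which is the claim.

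The main obstacle I expect is the careful derivation of the two free-boundary second-variation formulas with the correct \emph{opposite} boundary signs, since the whole proof hinges on this exact cancellation; getting the conventions for the conormal of $\B^n$ versus that of $\E^n\setminus\mathring\B^n$ right is the delicate part, and in codimension greater than one the normal-bundle Jacobi operator must be handled intrinsically. A secondary technical point, easily dispatched, is that for the non-compact surfaces $\Sigma$ and $E\Sigma$ the relevant indices are finite (guaranteed by finiteness of the total curvature), so the maximal negative-definite subspaces are attained on compactly supported sections and the algebraic lemma applies verbatim.
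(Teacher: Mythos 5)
Your proposal is correct and takes essentially the same route as the paper: the heart of both arguments is the identity $\delta^2 A_\Sigma(X)=\delta^2 A_{I\Sigma}(X|_{I\Sigma})+\delta^2 A_{E\Sigma}(X|_{E\Sigma})$, valid because the Robin boundary terms of the interior and exterior free boundary problems enter with opposite signs and cancel along the common curve $\Sigma\cap\partial\B^n$. The only (cosmetic) difference is the final counting: the paper diagonalizes $\delta^2 A_{E\Sigma}$ on a maximal negative subspace for $\delta^2 A_\Sigma$ restricted to a bounded domain and bounds the two summands separately, while you pass to $Q$-orthogonal complements of maximal negative subspaces of the two pieces and derive a contradiction from a dimension count; these are equivalent pieces of finite-dimensional linear algebra.
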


Also we were able to estimate the index of $IFS_{k,l}$ from above. Namely, the following theorem holds

\begin{theorem}\label{th:ifs-ub}
For $k,l$ as in Theorem~\ref{th:fs-ind} we have
$$
\Ind(IFS_{k,l})\leqslant\begin{cases}
8(k+l)-7,&\text{$k$ is odd,}\\
4(k+l)-7,&\text{$k$ is even.}
\end{cases}
$$
\end{theorem}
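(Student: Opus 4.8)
The plan is to bound $\Ind(IFS_{k,l})$ by the spectral index estimate~\eqref{eq:intro-inds} and then to compute the Steklov spectrum of $IFS_{k,l}$ explicitly, exploiting the $\Sph^1$-symmetry of the parametrization~\eqref{eq:intro-param1}. Since $IFS_{k,l}$ is an FBMS in $\B^4$, inequality~\eqref{eq:intro-inds} with $n=4$ gives
$$
\Ind(IFS_{k,l})\leqslant 4\Ind_S(IFS_{k,l})+\dim\mathcal{M}(IFS_{k,l}).
$$
Because $IFS_{k,l}$ is topologically a bounded annulus (odd $k$) or a bounded M\"obius band (even $k$), in both cases the moduli space of conformal structures is one-dimensional, i.e. $\dim\mathcal{M}(IFS_{k,l})=1$. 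It therefore remains to show $\Ind_S(IFS_{k,l})\leqslant 2(k+l)-2$ for odd $k$ and $\Ind_S(IFS_{k,l})\leqslant(k+l)-2$ for even $k$.

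To control $\Ind_S$, i.e. the number of Steklov eigenvalues strictly below $1$, first I would check that~\eqref{eq:intro-param1} is conformal: a direct computation gives $|u_t|^2=|u_\theta|^2=\lambda(t)^2$ and $u_t\cdot u_\theta=0$, where $\lambda(t)^2=\frac{k^2l^2}{r_{k,l}^2}(\cosh^2 lt+\sinh^2 kt)$ depends only on $t$ and is even in $t$. As the Steklov problem depends only on the conformal class, it transfers to the flat cylinder $[-T_{k,l},T_{k,l}]\times(\R/2\pi\Z)$, on which harmonic functions separate. The Steklov eigenfunctions are then $1$, $t$, and $\{\cosh mt,\sinh mt\}\cdot\{\cos m\theta,\sin m\theta\}$ for $m\geqslant 1$, and imposing the Steklov condition at $t=\pm T_{k,l}$ (with outward conormal $\pm\lambda_0^{-1}\partial_t$, $\lambda_0=\lambda(\pm T_{k,l})$) yields the eigenvalues
$$
\sigma=0,\quad \sigma=\frac{1}{\lambda_0 T_{k,l}},\quad \sigma_m^{+}=\frac{m\tanh(m T_{k,l})}{\lambda_0},\quad \sigma_m^{-}=\frac{m\coth(m T_{k,l})}{\lambda_0}\ \ (m\geqslant 1),
$$
the last two having multiplicity $2$ and corresponding to eigenfunctions even, respectively odd, in $t$. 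The FBMS condition---equivalently, the coordinate functions of~\eqref{eq:intro-param1} being Steklov eigenfunctions with eigenvalue $1$---forces $\lambda_0=k\tanh(kT_{k,l})=l\coth(lT_{k,l})$, which is exactly the defining equation of $T_{k,l}$; hence $\sigma_k^{+}=\sigma_l^{-}=1$.

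Next I would record the monotonicity facts that $m\mapsto m\tanh(mT_{k,l})$ and $m\mapsto m\coth(mT_{k,l})$ are both strictly increasing for $m>0$ (the latter via $\tfrac12\sinh 2x>x$). Consequently $\sigma_m^{+}<1$ exactly when $m<k$ and $\sigma_m^{-}<1$ exactly when $m<l$. For odd $k$ (the orientable annulus), counting all eigenvalues below $1$ gives at most $2+2(k-1)+2(l-1)=2(k+l)-2$, where the leading $2$ accounts for $\sigma=0$ and the possibly small mode $1/(\lambda_0 T_{k,l})$; this yields $\Ind(IFS_{k,l})\leqslant 8(k+l)-7$. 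For even $k$ the surface $IFS_{k,l}$ is the quotient of the cylinder by the involution $(t,\theta)\mapsto(-t,\theta+\pi)$, so only the invariant eigenfunctions descend. A short parity check shows that the even-in-$t$ modes ($\cosh$) survive iff $m$ is even and the odd-in-$t$ modes ($\sinh$) survive iff $m$ is odd, while the constant survives and $t$ does not; summing the surviving modes below $1$ gives $1+(k-2)+(l-1)=(k+l)-2$, whence $\Ind(IFS_{k,l})\leqslant 4(k+l)-7$.

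The main obstacle is the bookkeeping in the non-orientable case: one must pin down precisely which separated eigenfunctions are invariant under the involution, so as to obtain the Steklov spectrum of the M\"obius band itself rather than that of its orientable double cover, and also justify that~\eqref{eq:intro-inds} and the value $\dim\mathcal{M}=1$ apply verbatim to a non-orientable FBMS. The remaining ingredients---conformality of~\eqref{eq:intro-param1}, the identity $\lambda_0=k\tanh(kT_{k,l})=l\coth(lT_{k,l})$, and the strict monotonicity of the two eigenvalue branches---are routine but essential, since it is the exact positions of the eigenvalues relative to $1$ that produce the stated constants.
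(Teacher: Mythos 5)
Your proposal is correct and takes essentially the same approach as the paper: inequality~\eqref{eq:intro-inds} with $n=4$, the fact that $\dim\mathcal M(IFS_{k,l})=1$ for an annulus or M\"obius band, an explicit separation-of-variables computation of the Steklov spectrum on the flat cylinder with boundary condition $v_t=\sigma\rho(T_{k,l})v$ (where $\rho(T_{k,l})=k\tanh(kT_{k,l})=l\coth(lT_{k,l})$), and the same parity count of eigenfunctions invariant under $(t,\theta)\mapsto(-t,\theta+\pi)$ for even $k$. The only, harmless, deviation is that you bound $\Ind_S$ from above without deciding whether the mode $t$ lies below $1$, whereas the paper proves $T_{k,l}\rho(T_{k,l})>1$ and computes $\Ind_S$ exactly.
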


The proof of this theorem makes use of inequality~\eqref{eq:intro-inds} and the explicit computation of the spectral index of $IFS_{k,l}$ (see \S\ref{sec:ifs}).

\subsection{Discussion and open problems} 

1) \textit{On the proofs of Theorem~\ref{th:fs-ind} and Conjecture~\ref{con:efs-stab}.} In the proof of Theorem~\ref{th:fs-ind} we first find an appropriate upper bound on the index, then we find a lower bound, and, finally, we show that these two bounds are equal. The method of finding of an upper bound is inspired by the proof of Theorem~4.5 in the paper~\cite{ejiri2008comparison}. The proof of the lower bound is based on the same type of computations which were made by the second named author in the paper~\cite{morozov2022index}. Despite the fact that the proof of Conjecture~\ref{con:efs-stab} is computer assisted, one can see from \S\ref{sec:efs} that this proof is quite plausible. In fact, in \S\S\ref{sec:efs-sep}--\ref{sec:efs-elem} we show that Conjecture~\ref{con:efs-stab} follows from the following inequality
\begin{equation*}
\frac{x}{\cosh x}+\frac{2b-x}{\cosh(2b-x)}<
\frac{2b}{3b-2T_\infty}
\end{equation*}
for $x\geqslant 0$ and $b\geqslant T_\infty$. Here $T_\infty$ is the unique positive solution of the equation $t\tanh t=1$. This is the only place in the proof of Conjecture~\ref{con:efs-stab} where we use numerical computations. 


\medskip

2) \textit{Index of the Alarc\'on-Forstneri\v{c}-L\'opez M\"obius band.} For $k=2$ and $l=1$ the Fraser-Sargent surface $FS_{2,1}$ is nothing but the \textit{Alarc\'on-Forstneri\v{c}-L\'opez M\"obius band}. This surface was a first example of an embedded non-orientable minimal surface in $\E^4$ (\cite{alarcon2020new}, see also \cite{de1986some}). Theorem~\ref{th:fs-ind} then implies that $\Ind(FS_{2,1})=1$. As we mentioned in~\S\ref{sec:intro-ov} it was shown by Chodosh and Maximo in~\cite[Theorem 1.8]{chodosh2018topology} that there are no complete immersed non-orientable minimal surfaces of index 1 in $\mathbb E^3$. Theorem~\ref{th:fs-ind} shows that complete immersed non-orientable minimal surfaces of index 1 exist in $\mathbb E^4$. We suppose that the Alarc\'on-Forstneri\v{c}-L\'opez M\"obius band plays the same role as the catenoid in $\mathbb E^3$. Therefore the following conjecture seems plausible 

\begin{conjecture}
The Alarc\'on-Forstneri\v{c}-L\'opez M\"obius band is the only complete embedded non-orientable surface of index one in $\mathbb E^4$.
\end{conjecture}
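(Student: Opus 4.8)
The plan is to run a rigidity classification in the spirit of the L\'opez-Ros theorem in $\E^3$ (the catenoid and Enneper's surface are the only index-one immersed orientable minimal surfaces there) and of the Chodosh-Maximo non-orientable estimates, but transported to $\E^4$. First I would fix a complete embedded non-orientable minimal surface $\Sigma\subset\E^4$ with $\Ind(\Sigma)=1$. Since finite index forces finite total curvature by the Fisher-Colbrie-Nayatani theorem, the Huber-Osserman theorem presents $\Sigma$ as a compact non-orientable Riemann surface punctured at finitely many ends, equipped with an $\E^4$ Weierstrass-type representation; equivalently, the orientable double cover $\widetilde\Sigma$ carries holomorphic Gauss data together with a fixed-point-free antiholomorphic involution $\tau$ satisfying $\widetilde\Sigma/\tau=\Sigma$. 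Because $\Ind(\Sigma)$ equals the $\tau$-invariant index of $\widetilde\Sigma$, the hypothesis $\Ind(\Sigma)=1$ becomes a sharp constraint on the invariant part of the Jacobi operator of $\widetilde\Sigma$.

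The core of the argument is to convert $\Ind(\Sigma)=1$ into numerical constraints on the topology (genus, orientability type, the number of ends, and the end multiplicities) and on the total curvature, and then to enumerate the finitely many admissible Weierstrass data. For the lower direction one needs an $\E^4$, non-orientable analogue of the Chodosh-Maximo bound; for the upper direction the Ejiri-Micallef inequality $\Ind(\widetilde\Sigma)\le\frac1\pi\int_{\widetilde\Sigma}(-K)\,dA+2\widetilde\gamma-2$ already applies to the double cover and caps the total curvature. Squeezing these bounds at $\Ind(\Sigma)=1$ should force $\TC(\Sigma)=-4\pi$, a single end, and the minimal admissible genus, which are exactly the numerics of $FS_{2,1}$ (recall $\TC(FS_{2,1})=-2\pi k=-4\pi$). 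Embeddedness would then be used to exclude branch points and extra ends and to discard the remaining candidate data, leaving only the $\tau$-invariant data of the Alarc\'on-Forstneri\v{c}-L\'opez band.

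I expect the lower-bound step to be the main obstacle. The Chodosh-Maximo machinery is genuinely three-dimensional: it feeds the coordinate functions of $\Sph^2\hookrightarrow\E^3$, which are first Laplace eigenfunctions of the round sphere, through the conformal Gauss map to build test functions for the stability form. In $\E^4$ the Gauss map lands in the oriented Grassmannian $\widetilde{\mathrm{Gr}}(2,4)\cong\Sph^2\times\Sph^2$, and the Jacobi operator couples the two normal directions, so this clean eigenvalue input is lost; one must instead exploit holomorphicity of the $\E^4$ Gauss data and the $\Sph^1$-type symmetry. A realistic route is to first establish the sharp lower bound only for surfaces carrying the relevant rotational symmetry, reducing the Jacobi operator to an ODE as in the proof of Theorem~\ref{th:fs-ind}, and then to prove that any index-one example must possess this symmetry. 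Establishing that final reduction---rigidity forcing the symmetry---is the decisive and hardest point, since without it the enumeration of Weierstrass data in $\E^4$ remains open-ended.
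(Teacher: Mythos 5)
This statement is an open conjecture in the paper, not a theorem: the authors offer no proof, only the heuristic that $FS_{2,1}$ ``plays the same role as the catenoid in $\E^3$.'' So there is no proof of record to compare against, and your proposal must stand on its own. It does not: it is a research program, not a proof, and you yourself flag its two decisive steps---a non-orientable, $\E^4$ analogue of the Chodosh--Maximo lower bound, and the rigidity claim that index one forces $\Sph^1$-symmetry (after which Fraser--Sargent-type uniqueness could take over)---as unresolved. Those two steps are not technical details; they are at least as hard as the conjecture itself, and nothing in the paper (whose index computations for $FS_{k,l}$ all start from the \emph{given} explicit parametrization~\eqref{eq:intro-param1}) supplies them.

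Beyond the admitted gaps, two concrete steps in your outline fail as stated. First, your opening move---``finite index forces finite total curvature by the Fisher-Colbrie-Nayatani theorem''---is only known in $\E^3$: as the paper recalls in \S\ref{sec:intro-ov}, the general statement is that finite total curvature implies finite index, with the equivalence special to $\E^3$ (Fischer-Colbrie). In $\E^4$ you cannot conclude finite total curvature from $\Ind(\Sigma)=1$, so the Huber--Osserman compactification and the entire Weierstrass-data enumeration are not available at the outset. Second, the Ejiri--Micallef inequality runs in the wrong direction for your purpose: from $\Ind(\widetilde\Sigma)\leqslant-\frac{1}{\pi}\TC(\widetilde\Sigma)+2\widetilde\gamma-2$ and knowledge of the index you get a \emph{lower} bound on $-\TC$, not a cap; an upper bound on $-\TC$ in terms of the index is exactly the missing Chodosh--Maximo-type estimate, so both halves of your squeeze rest on the same nonexistent ingredient. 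Note also that passing between $\Ind(\Sigma)$ and $\Ind(\widetilde\Sigma)$ is lossy: by Theorem~\ref{th:fs-ind}, $\Ind(FS_{2,1})=1$ while $\Ind(\widetilde{FS}_{2,1})=3$, so constraints phrased on the double cover must be formulated for the $\iota$-invariant part of the spectrum (as in Proposition~\ref{pr:fs-d2iota}), not for $\Ind(\widetilde\Sigma)$ itself.
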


\medskip

3) \textit{Other examples of EFBMS.} To the best of our knowledge the only known examples of non totally geodesic EFBMS are the family of catenoidal hypersurfaces in $\E^n\setminus \mathring\B^n$ for any $n \geqslant 3$ obtained in \cite{mazet2022free} and the surfaces $EFS_{k,l}$ in $\E^4\setminus \mathring\B^4$. It would be interesting to find other examples of EFBMS in $\E^n\setminus \mathring\B^n$.

\medskip



4) \textit{On Corollary~\ref{cor:ifs-lb}.} The result that we get in Corollary~\ref{cor:ifs-lb} particularly shows that in $\B^4$ there are no restrictions on the index from above: one can always find an FBMS with arbitrarily large index. The lower bound in Corollary~\ref{cor:ifs-lb} is not sharp. Indeed, for $k=2, l=1$ we obtain the critical M\"obius band. It was proved in the paper~\cite{medvedev2023index} that its index is 5 while by Corollary~\ref{cor:ifs-lb} $\Ind(IFS_{2,1})\geqslant 2$. Actually, the numerical experiments show that the estimate in Corollary~\ref{cor:ifs-lb} can be improved. More precisely, one has

\begin{conjecture}\label{con:ifs-lblite}
Let $\widetilde{IFS}_{k,l}$ denote the orientable cover of ${IFS}_{k,l}$ with $k,l$ as in Theorem~\ref{th:fs-ind} then
$$
\Ind(\widetilde{IFS}_{k,l})\geqslant 6k+2l-1.
$$
\end{conjecture}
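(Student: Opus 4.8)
The plan is to prove the lower bound by exhibiting $6k+2l-1$ linearly independent normal variations of $\widetilde{IFS}_{k,l}$ on which the second variation of area is negative. Unlike the weaker bound of Corollary~\ref{cor:ifs-lb}, which follows from the splitting in Proposition~\ref{pr:ifs-split}, this sharper estimate appears to require working directly on the interior surface, where the free boundary term is itself destabilizing. Recall that for an FBMS $\Sigma$ in $\B^4$ the index form acting on a section $V$ of the (rank two) normal bundle reads
$$
Q(V,V)=\int_\Sigma\bigl(|\nabla^\perp V|^2-|A^V|^2\bigr)\,dA-\int_{\partial\Sigma}|V|^2\,ds,
$$
where the boundary integral is the destabilizing Steklov term coming from the umbilic boundary $\partial\B^4$. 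Since $\widetilde{IFS}_{k,l}$ is $\mathbb S^1$-invariant and the parametrization in~\eqref{eq:intro-param1} is equivariant, the first step is to lift the rotation action to the normal bundle and Fourier-decompose $V$ in the angular variable $\theta$; keeping track of how the natural normal frame twists, $Q$ block-diagonalizes into a family of one-dimensional problems indexed by the angular frequency $n\in\Z$.

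In each frequency block the surface collapses to the interval $[-T_{k,l},T_{k,l}]$ and $Q$ becomes a (coupled) Sturm--Liouville problem on this interval with Robin boundary conditions produced by the Steklov term. Exploiting the product-type structure of the parametrization---the first and last pairs of coordinates in~\eqref{eq:intro-param1} are $k\sinh(lt)\,e^{il\theta}$ and $l\cosh(kt)\,e^{ik\theta}$ in $\C^2$---I would partially decouple this system and reduce the problem to counting negative eigenvalues of scalar ODE eigenvalue problems. The second step is then to construct, block by block, explicit destabilizing profiles $f(t)$ (built from the Jacobi fields generated by the ambient translations, rotations and dilations, suitably modified to meet the free boundary condition) for which $Q<0$, and to check that the resulting variations are linearly independent across blocks. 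This is exactly the type of computation carried out for the complete surface in the proof of Theorem~\ref{th:fs-ind}.

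The final step is the bookkeeping: summing the contributions of the blocks that carry a negative direction should yield precisely $6k+2l-1$. The expectation, supported by the numerical experiments, is that the boundary term destabilizes a range of low angular frequencies, producing a count that now grows in both $k$ and $l$ (in contrast to the complete surface, whose index grows only in $k$), with the isolated $-1$ accounting for a single direction that fails to be destabilizing.

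The main obstacle is precisely this sharp count. For the complete surface $FS_{k,l}$ the analogous enumeration is governed by the explicit asymptotics at the ends and is therefore tractable; after cutting along $\partial\B^4$ one loses the ends and must instead decide, in each block, whether the Robin boundary term pushes the lowest eigenvalue below zero. Whether it does is controlled by a transcendental inequality in $T_{k,l}$ and $n$, of the same flavour as the single inequality to which Conjecture~\ref{con:efs-stab} was reduced. Turning this case-by-case sign analysis into a uniform rigorous statement is the crux, and it is plausibly the reason the estimate is for now only conjectural; a promising route is to establish the required monotonicity of the boundary term in the frequency $n$ by a comparison argument, or else to reduce the entire count to a finite list of explicit inequalities amenable to a computer-assisted verification.
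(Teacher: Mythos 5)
You should first be aware that the statement you were given is not a theorem of the paper: it is stated there as a conjecture, supported only by numerical experiments, so no complete proof exists to compare against. With that caveat, your plan coincides with the paper's own framework in \S\ref{sec:ifs}: the $\mathbb S^1$-symmetry is used to separate variables, reducing the index of $\widetilde{IFS}_{k,l}$ to counting the negative eigenvalues $\lambda_i(m)$ of the matrix Sturm--Liouville problems~\eqref{eq:ifs-sl} with Robin boundary conditions (your ``Steklov term''), and the number $6k+2l-1$ is exactly the bookkeeping $3+2\bigl[(k+l)+(k+l-1)+(k-l-1)\bigr]$ resulting from the conjectured sign pattern of Conjecture~\ref{con:ifs-lb}: $\lambda_1(m)<0$ for $m\leqslant k+l$, $\lambda_2(m)<0$ for $m\leqslant k+l-1$, $\lambda_3(m)<0$ for $m\leqslant k-l-1$, with each negative eigenvalue at $m\geqslant 1$ counted twice and each one at $m=0$ counted once (this, not ``a single direction that fails to be destabilizing'', is where the $-1$ comes from). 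So your reduction, and your identification of the crux as the block-by-block sign analysis, are exactly right --- and exactly where the paper also stops.

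However, two of the concrete routes you propose for closing the gap would fail as stated. First, you suggest ``partially decoupling'' the system into scalar ODE problems. The off-diagonal coupling $-2mb(t)$ in the potential~\eqref{eq:fs-pot} cannot be discarded when proving a \emph{lower} bound on the index: the decoupling estimate used in \S\ref{sec:efs} (splitting $Q_{m,T}$ into $Q_T^{(1)}+Q_T^{(2)}$ plus a nonnegative remainder $\widetilde Q_{m,T}$) is one-sided and is useful precisely for proving \emph{positivity}, i.e.\ stability; to produce negative directions one must build test sections for the genuinely coupled form, as is done for the complete surface in the proof of Proposition~\ref{pr:fs-lb}. Second, your ``promising route'' of establishing monotonicity in the angular frequency by a comparison argument is refuted by the paper's own numerics: the remark following Conjecture~\ref{con:ifs-lb} points out that $\lambda_i(m)$ is in general \emph{not} monotone in $m$ for this vector-valued problem (see the second column of Table~\ref{tab:ifs-eigenval} for $k=7$, $l=3$), in contrast with scalar Sturm--Liouville problems, and this is named there as one of the major obstacles. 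Finally, note that on the interior surface the natural Killing-field projections (the rotational ones, which respect the free boundary condition) yield \emph{zero} modes of~\eqref{eq:ifs-sl} at $m=0$, $k-l$, $k+l$, not negative directions; translations do not preserve $\B^4$, so their projections fail the Robin condition, and turning any of these fields into admissible destabilizing variations for three distinct eigenvalue branches simultaneously is precisely the uniform transcendental analysis that remains open.
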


For a more general statement see Conjecture~\ref{con:ifs-lb}.

\medskip

5) \textit{Relation to General Relativity.} The interest to the study of EFBMS may be related to General Relativity. It is well-known that the \textit{$n-$dimensional Schwarzschild space $Sch^n$}, which is one of the most popular objects of study in GR, is conformally equivalent to $\E^n\setminus\mathring\B^n$. Therefore it is natural to ask can one perturb an EFBMS in order to get an FBMS in $Sch^n$? It was shown in the paper~\cite{carlotto2017non} that for $n>3$ there exist catenoidal free boundary minimal hypersurfaces obtained as a perturbation of Euclidean hyper-catenoids. For $n=3$ the authors proved in the same paper that no perturbation of a Euclidean catenoid can produce a minimal surface in $Sch^3$. In fact, Carlotto and Mondino proved these statements for a more general setting of the so-called \emph{asymptotically Schwarzschildean manifolds}. In the case of $Sch^n$ with $n>3$ Barbosa and Moya in the paper~\cite{barbosa2021proper} found explicit examples of such rotationally symmetric catenoidal free boundary minimal hypersurfaces. It would be interesting to understand can one perturb $EFS_{k,l}$ in order to get an FBMS in $Sch^4$. 

\medskip

6) \textit{Spectral Geometry of $EFS_{k,l}$.} As we have already mentioned, there is a beautiful connection between the theory of FBMS in $\B^n$ and the spectral geometry of the Steklov problem. It would be interesting to find, if possible, a similar connection with Spectral Geometry in the case of EFBMS. Clearly, the components of an EFBMS $u\colon \Sigma \to \E^n\setminus \mathring\B^n$ satisfy the same spectral Steklov problem~\eqref{eq:intro-steklov} as the components of any FBMS in $\B^n$. Hence the components of $u$ are Steklov eigenfunctions with eigenvalue one as in the case of FBMS in $\B^n$. However, this Steklov problem does not have necessarily discrete Steklov spectrum since $\Sigma$ is not compact. This fact makes certain difficulties in the correct definition of the spectral index which was successfully used in the paper~\cite{medvedev2023index} in the case of FBMS in $\B^n$.

\subsection{Plan of the paper}

The paper is organized in the following way. In \S\ref{sec:def} we introduce necessary definitions and notation that we use throughout the paper. In \S\ref{sec:jac} we provide some computational results and particularly we compute the Jacobi operator for Fraser-Sargent surfaces. In \S\ref{sec:fs} we compute the index and the nullity of Fraser-Sargent surfaces in $\E^4$ and prove Theorem~\ref{th:fs-ind}. In \S\ref{sec:efs} we provide a numerical evidence that exterior Fraser-Sargent surfaces are stable (see Conjecture~\ref{con:efs-stab}). In \S\ref{sec:ifs} we prove Theorem~\ref{th:ifs-ub}, Proposition~\ref{pr:ifs-split} and Corollary~\ref{cor:ifs-lb}. Here we also discuss Conjecture~\ref{con:ifs-lblite}. Finally, in Appendix~\ref{sec:efbms} we prove a general index upper bound for EFBMS and in Appendix~\ref{sec:nonor} we prove some analogs of the Ejiri-Micallef inequalities for non-orientable surfaces in $\E^n$ and in a general Riemannian manifold.

\subsection{Acknowledgments}

The authors are grateful to Mikhail Karpukhin, Konstantin Loginov and Antonio Alarc\'on for fruitful discussions. During the work on the paper the authors were partially supported by the Theoretical Physics and Mathematics Advancement Foundation ``BASIS''. 
The work of the second author is also supported in part by the M\"obius Contest Foundation for Young Scientists.
This research is a part of the second author’s master thesis at the Higher School of Economics under the supervision of Alexei Penskoi.

\section{Notation and definitions}\label{sec:def}

In this paper we use the following notation and definitions:

\begin{itemize}
\item{$\E^n$ is the Euclidean $n$-dimensional space, $\B^n,\mathring\B^n$ are respectively the closed and the open unit balls centered at the origin of $\E^n$;}

\item{$\Sigma$ is either
\begin{itemize}
\item{a minimal surface in $\E^n$ without boundary given by an immersion $u\colon\Sigma\to\E^n$, or}
\item{a free boundary minimal surface (FBMS) in $\B^n$ given by an immersion $u\colon\Sigma\to\B^n$, or}
\item{an exterior free boundary minimal surface (EFBMS) in $\E^n\setminus\mathring\B^n$ given by an immersion $u\colon\Sigma\to\E^n\setminus\mathring\B^n$;}
\end{itemize}
}

\item{$\langle-,-\rangle$ and $|\cdot|$ are the standard Euclidean scalar product and norm in $\R^n$ respectively; we use the same notation for the induced scalar product on $\Sigma$;}

\item{$dA$ and $dL$ denote the area element on $\Sigma$ and the length element on $\partial\Sigma$ respectively;}

\item{$\Gamma(E)$ is the set of all smooth sections of a (real or complex) vector bundle $E$;}

\item{$T\Sigma$ and $N\Sigma$ are the tangent bundle of $\Sigma$ and the normal bundle to $\Sigma$ respectively;}

\item{for any vector $v\in\R^n$ the vectors $v^\bot$ and $v^\top$ are the projections of $v$ onto $\Gamma(N\Sigma)$ and $\Gamma(T\Sigma)$ respectively;}

\item{if $\partial\Sigma\ne\varnothing$, then $\eta\in\Gamma(T\Sigma|_{\partial\Sigma})$ is the outward unit normal vector field to $\partial\Sigma$;}

\item{$\nabla^\bot$ and $\nabla^\top$ are the connections in $N\Sigma$ and $T\Sigma$ respectively; $\nabla$ is the covariant derivative in $\E^n$;}

\item{the Laplace-Beltrami operator on $\Sigma$ is
$$
\Delta f=\sum_{i=1}^2 (e_i(e_i f)-(\nabla_{e_i}^\top e_i)f),\quad
f\in C^\infty(\Sigma),
$$
where $e_1,e_2$ is a local orthonormal frame in $\Gamma(T\Sigma)$;
}

\item{the Laplacian in the normal bundle is given by
\begin{equation}\label{eq:def-laplace}
\Delta^\bot X=\sum_{i=1}^2 (\nabla_{e_i}^\bot \nabla_{e_i}^\bot X-\nabla_{\nabla_{e_i}^\top e_i}^\bot X),\quad
X\in\Gamma(N\Sigma),
\end{equation}
where $e_1,e_2$ is a local orthonormal frame in $\Gamma(T\Sigma)$;
}

\item{the second fundamental form of $\Sigma$ is given by
$$
B(X,Y)=(\nabla_X Y)^\bot,\quad
X,Y\in\Gamma(T\Sigma),
$$
in particular, $b_{ij}=B(e_i,e_j)$;
}

\item{the Simons operator $\mathcal B\colon\Gamma(N\Sigma)\to\Gamma(N\Sigma)$ is given by
$$
\mathcal B(X)=\sum_{i,j=1}^2 \langle b_{ij},X\rangle b_{ij},\quad
X\in\Gamma(N\Sigma);
$$
}

\item{the Jacobi stability operator $L\colon\Gamma(N\Sigma)\to\Gamma(N\Sigma)$ is given by
$$
L(X)=\Delta^\bot X+\mathcal B(X),\quad
X\in\Gamma(N\Sigma);
$$
}

\item{the energy functional of the immersion $u$ is $E[u]=\frac{1}{2}\int_\Sigma |du|^2 dA$;}

\item{the total (Gauss) curvature of $\Sigma$ is
$$
\TC(\Sigma)=\int_\Sigma K\,dA+\,\int_{\partial\Sigma} k_g dL,
$$
where $K$ is the Gauss curvature of $\Sigma$ and $k_g$ is the (signed) geodesic curvature of $\partial\Sigma$ in $\Sigma$;
}

\item{if $\partial\Sigma=\varnothing$, then the second variation of the area towards a normal vector field $X\in\Gamma(N\Sigma)$ is the following quadratic form
$$
\delta^2 A(X)=-\int_\Sigma\langle L(X),X\rangle\,dA=
\int_\Sigma (|\nabla^\bot X|^2-\langle\mathcal B(X),X\rangle)\,dA,
$$
and the second variation of energy towards a vector field $V\in\Gamma(\Sigma\times\R^n)$ is
$$
\delta^2 E(V)=\int_\Sigma |\nabla V|^2 dA.
$$}

\item{if $\Sigma$ is an (E)FBMS, then the forms $\delta^2 A$ and $\delta^2 E$ both have contributions from the boundary.
More precisely, a vector field $V\in\Gamma(\Sigma\times\R^n)$ is called \emph{admissible} (see~\cite[Definition~1]{lima2017bounds}) if $V(p)\perp u(p)$ for any point $p\in\partial\Sigma$.
Then the second variation of area is given by
$$
\delta^2 A(X)=-\int_\Sigma\langle L(X),X\rangle\,dA+\int_{\partial\Sigma}(\langle X,\nabla_\eta^\bot X\rangle\pm|X|^2)\,dL=
\int_\Sigma (|\nabla^\bot X|^2-\langle\mathcal B(X),X\rangle)\,dA\pm\int_{\partial\Sigma}|X|^2,\,dL
$$
where $X\in\Gamma(N\Sigma)$, and the second variation of energy is given by
$$
\delta^2 E(V)=\int_\Sigma |\nabla V|^2 dA\pm\int_{\partial\Sigma} |V|^2 dL,
$$
where $V$ is an admissible vector field.
Here the sign is `$-$' in the case of FBMS in $\B^n$ and `+' in the case of EFBMS in $\E^n\setminus\mathring\B^n$;}

\item{for a bounded domain $\Omega\subset\Sigma$ the (Morse) \emph{index} of $\Omega$ is defined as the maximal dimension of a vector space $S\subset\Gamma(N\Sigma)$ such that $\delta^2 A$ is negative definite on $S$ and $\supp X\subset\Omega$ for each $X\in S$.
It is well-known that if $\partial\Sigma=\varnothing$, then this quantity is equal to the number of negative eigenvalues of the Dirichlet problem
$$
\begin{cases}
LX=-\lambda X&\text{ on }\Omega,\\
X=0&\text{ on }\partial\Omega,
\end{cases}
$$
where $X\in\Gamma(N\Sigma|_\Omega)$. If $\Sigma$ is an (E)FBMS, then $\Ind(\Omega)$ is equal to the number of negative eigenvalues of the mixed Dirichlet-Robin problem
$$
\begin{cases}
LX=-\lambda X&\text{ on }\Omega,\\
\frac{\partial X}{\partial\eta}\pm X=0&\text{ on }\partial\Omega\cap\partial\B^n,\\
X=0&\text{ on }\partial\Omega\setminus\partial\B^n,
\end{cases}
$$
where $X\in\Gamma(N\Sigma|_\Omega)$. Again, the sign is `$-$' in the case of FBMS and `+' in the case of EFBMS;}

\item{the (Morse) \emph{index} of $\Sigma$ is $\Ind(\Sigma)=\sup\Ind(\Omega)$, where the supremum is taken over all bounded domains $\Omega\subset\Sigma$. The surface $\Sigma$ is called \emph{stable} if $\Ind(\Sigma)=0$;}

\item{the \textit{energy index} $\Ind_E\Sigma$ is defined similarly to $\Ind(\Sigma)$ with $\delta^2 E$ instead of $\delta^2 A$ and the set of all admissible vector fields (or just $\Gamma(\Sigma\times\R^n)$ if $\partial\Sigma=\varnothing$) instead of $\Gamma(N\Sigma)$.
Note, however, that if $\partial\Sigma=\varnothing$ or $\Sigma$ is an EFBMS, then $\Ind_E\Sigma=0$ since the quadratic form $\delta^2 E$ is non-negative in both cases;}

\item{a vector field $X\in\Gamma(N\Sigma)$ is called a \emph{Jacobi field} if $LX=0$.
If $\partial\Sigma=\varnothing$, then the dimension of the space of all bounded Jacobi fields on $\Sigma$ is called the \emph{nullity} of $\Sigma$ and is denoted by $\Nul(\Sigma)$. The following claim is well-known.

\begin{claim}\label{cl:def-jacfields}
The projection of a Killing vector field in $\E^n$ on $N\Sigma$ is a Jacobi field on $\Sigma$.
In particular, the projection of any constant vector field in $\E^4$ on $N\Sigma$ is a bounded Jacobi field and $\Nul(\Sigma)\geqslant 4$ if $\Sigma$ is not contained in $\E^3$.
\end{claim}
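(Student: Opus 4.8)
The plan is to establish the general fact first and then specialize. \emph{Step 1 (Killing $\Rightarrow$ Jacobi).} Let $Y$ be a Killing field on $\E^n$ and let $\phi_t$ be its flow, a one-parameter group of isometries. Isometries preserve the mean curvature vector $\vec H$, so each $\phi_t(\Sigma)$ is again minimal; hence $t\mapsto\phi_t(\Sigma)$ is a variation of $\Sigma$ through minimal surfaces with variation field $Y|_\Sigma=Y^\top+Y^\bot$. I would then invoke the first variation formula for $\vec H$: for a variation with field $V$ of a minimal surface in a flat ambient space it reduces to $\partial_t\vec H|_{t=0}=\Delta^\bot V^\bot+\mathcal B(V^\bot)=L(V^\bot)$, the tangential part $V^\top$ contributing only terms proportional to $\vec H$ and $\nabla^\bot\vec H$ (which vanish by minimality) and the ambient curvature term vanishing because $\E^n$ is flat. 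Since $\vec H\equiv 0$ along the isometry flow, the left-hand side is $0$, so $L(Y^\bot)=0$ and $Y^\bot$ is a Jacobi field. Alternatively, for the constant fields that we actually need, one can verify $L(a^\bot)=0$ by a direct computation: differentiating the decomposition $a=a^\top+a^\bot$ of a constant vector and projecting gives $\nabla^\bot_Z a^\bot=-B(Z,a^\top)$ and $\langle\nabla^\top_Z a^\top,W\rangle=\langle a^\bot,B(Z,W)\rangle$; feeding the first identity into $\Delta^\bot a^\bot$, using the Codazzi equation together with minimality to discard the $\nabla^\bot\vec H$ term, and then the second identity, one arrives exactly at $-\mathcal B(a^\bot)$.

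\emph{Step 2 (constant fields).} Constant vector fields are Killing, since their flows are translations, so for every $a\in\R^n$ the section $a^\bot$ is a Jacobi field, and it is bounded because $|a^\bot|\le|a|$ pointwise. In $\E^4$ the coordinate fields $e_1,\dots,e_4$ thus yield four bounded Jacobi fields $e_1^\bot,\dots,e_4^\bot$.

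\emph{Step 3 (independence and the bound).} It remains to show these four fields are linearly independent when $\Sigma\not\subset\E^3$. A relation $\sum_i c_i e_i^\bot\equiv 0$ means that the constant vector $c=\sum_i c_i e_i$ satisfies $c^\bot\equiv 0$, i.e.\ $c$ is tangent to $\Sigma$ at every point. Then $c|_\Sigma$ is a parallel tangent field, since $\nabla^\top_Z c=(\nabla_Z c)^\top=0$, so locally $\Sigma$ splits as a cylinder $u(s,t)=\alpha(s)+tc$ with $\alpha'\bot c$. Minimality gives $(\alpha'')^\bot=0$, while $\alpha''\bot\alpha'$ and $\alpha''\bot c$; as $\alpha''$ must then lie in $\mathrm{span}(\alpha',c)$ and be orthogonal to it, $\alpha''=0$. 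Hence $\alpha$ is a line and $\Sigma$ lies in the $2$-plane $\alpha(0)+\mathrm{span}(\alpha'(0),c)\subset\E^3$, contradicting the hypothesis. Therefore the four fields are independent and $\Nul(\Sigma)\ge 4$.

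The conceptually routine part is Step 1, which is just the standard principle that deformations through minimal surfaces generate Jacobi fields. The only step requiring genuine care is the linear independence in Step 3: translating the relation into the tangency $c^\bot\equiv 0$ is immediate, but deducing $\Sigma\subset\E^3$ from it essentially uses minimality to force the cylinder's generating curve to be a line, and I expect this to be the main obstacle.
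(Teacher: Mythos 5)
Your proof is correct. Note first that the paper does not actually prove this statement: it appears in \S\ref{sec:def} as a claim declared ``well-known'', so there is no internal argument to compare against, and your write-up supplies the missing proof. Step 1 is the standard mechanism (the flow of a Killing field moves $\Sigma$ through minimal surfaces, and for a minimal surface in a flat ambient space the linearization of the mean curvature along a variation field $V$ is $L(V^\bot)$, the tangential part contributing only terms in the mean curvature and its normal derivative, which vanish); your alternative direct verification via $\nabla^\bot_Z a^\bot=-B(Z,a^\top)$ and $\langle\nabla^\top_Z a^\top,W\rangle=\langle a^\bot,B(Z,W)\rangle$ together with the Codazzi equation is also sound and makes the claim self-contained. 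The only step with real content is Step 3, and you handle it correctly: a linear dependence produces a constant vector $c\ne 0$ that is everywhere tangent, hence parallel along $\Sigma$ with integral curves that are ambient straight lines, so locally $u(s,t)=\alpha(s)+tc$ with $\alpha'\perp c$; minimality gives $(\alpha'')^\bot=0$, while $\alpha''\perp c$ and $\alpha''\perp\alpha'$, and since $T\Sigma=\mathrm{span}(\alpha',c)$ this forces $\alpha''=0$. Two small points you should make explicit: (i) the orthogonality $\alpha''\perp\alpha'$ uses that $\alpha$ is parametrized by arc length (always arrangeable, but it should be said); (ii) the conclusion ``$\Sigma$ lies in a $2$-plane'' is a priori local, but since $c^\bot\equiv 0$ holds on all of $\Sigma$, every point has a planar neighborhood, so the second fundamental form vanishes identically and a connected surface with $B\equiv 0$ lies in a single affine $2$-plane, hence in an $\E^3$; this completes the contrapositive and gives $\Nul(\Sigma)\geqslant 4$.
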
}


\end{itemize}




\section{Jacobi operator for Fraser-Sargent surfaces}\label{sec:jac}

In this section we compute the Jacobi stability operator $L$ for Fraser-Sargent surfaces.
The computations are very similar to~\cite[\S3.1]{morozov2022index}.
Since the computations are local, they are valid for all the surfaces $FS_{k,l},EFS_{k,l},IFS_{k,l}$ and their orientable covers.

Fix a pair of relatively prime integers $(k,l)$ such that $k>l>0$.
We put $u_{k,l}=u$ and $r_{k,l}=r$ for simplicity.
Let us introduce the function
$$
\rho(t)=\frac{kl}{r}\sqrt{\sinh^2 lt+\cosh^2 kt}=\frac{kl}{r}\sqrt{\cosh^2 lt+\sinh^2 kt}.
$$

\begin{proposition}\label{pr:jac-basis}
For any point $x\in FS_{k,l}$ the vectors
\begin{equation}\label{eq:jac-basis}
\begin{aligned}
e_1&=
\frac{u_\theta}{\rho(t)}=
\frac{kl}{r\rho(t)}\begin{pmatrix}
-\sinh lt\sin l\theta\\
\sinh lt\cos l\theta\\
-\cosh kt\sin k\theta\\
\cosh kt\cos k\theta
\end{pmatrix},
&n_1&=\frac{kl}{r\rho(t)}\begin{pmatrix}
-\sinh kt\cos l\theta\\
-\sinh kt\sin l\theta\\
\cosh lt\cos k\theta\\
\cosh lt\sin k\theta
\end{pmatrix},\\
e_2&=
\frac{u_t}{\rho(t)}=
\frac{kl}{r\rho(t)}\begin{pmatrix}
\cosh lt\cos l\theta\\
\cosh lt\sin l\theta\\
\sinh kt\cos k\theta\\
\sinh kt\sin k\theta
\end{pmatrix},
&n_2&=\frac{kl}{r\rho(t)}\begin{pmatrix}
-\cosh kt\sin l\theta\\
\cosh kt\cos l\theta\\
\sinh lt\sin k\theta\\
-\sinh lt\cos k\theta
\end{pmatrix}
\end{aligned}
\end{equation}
form an orthonormal basis of $T_x\E^4$.
Moreover, $e_1,e_2$ is a basis of $T_x FS_{k,l}$, $n_1,n_2$ is a basis of $N_x FS_{k,l}$,
and $z:=t+i\theta$ is a conformal coordinate on $FS_{k,l}$ with conformal factor $\rho(t)^2$.
\end{proposition}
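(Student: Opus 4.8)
The plan is to prove everything by a single direct computation, organized so that the orthonormality of the four vectors and the conformality both fall out of the same cancellation pattern. First I would differentiate the parametrization~\eqref{eq:intro-param1}. A routine computation gives
\[
u_t=\frac{kl}{r}(\cosh lt\cos l\theta,\cosh lt\sin l\theta,\sinh kt\cos k\theta,\sinh kt\sin k\theta),
\]
\[
u_\theta=\frac{kl}{r}(-\sinh lt\sin l\theta,\sinh lt\cos l\theta,-\cosh kt\sin k\theta,\cosh kt\cos k\theta),
\]
and comparing with~\eqref{eq:jac-basis} shows at once that $u_t=\rho(t)\,e_2$ and $u_\theta=\rho(t)\,e_1$. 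Since $\rho(t)\geqslant\frac{kl}{r}\cosh kt>0$ everywhere, $u$ is an immersion and $e_1,e_2$ form a frame of $T_xFS_{k,l}$.

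Next I would compute the induced metric. Using $\cos^2+\sin^2=1$ in each coordinate pair gives $|u_\theta|^2=\frac{k^2l^2}{r^2}(\sinh^2 lt+\cosh^2 kt)$ and $|u_t|^2=\frac{k^2l^2}{r^2}(\cosh^2 lt+\sinh^2 kt)$, and the identity $\cosh^2-\sinh^2=1$ shows these two quantities coincide; they are precisely the two displayed forms of $\rho(t)^2$. The cross term $\langle u_t,u_\theta\rangle$ vanishes because its four summands cancel in two pairs. Hence $|u_t|=|u_\theta|=\rho(t)$ and $u_t\perp u_\theta$, which simultaneously proves that $e_1,e_2$ are orthonormal and that $z=t+i\theta$ is a conformal coordinate with conformal factor $\rho(t)^2$.

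The core of the argument is the orthonormality of the full frame $\{e_1,e_2,n_1,n_2\}$. Since all four vectors share the scalar prefactor $\frac{kl}{r\rho(t)}$, it suffices to work with the core vectors, and I would split $\R^4=\R^2\oplus\R^2$ into the first and last two coordinates. In each block the angular part is one of the orthonormal pair $(\cos m\theta,\sin m\theta)$, $(-\sin m\theta,\cos m\theta)$ with $m\in\{l,k\}$, times a hyperbolic amplitude. The norms $|n_1|,|n_2|$ again reduce to $\cosh^2 lt+\sinh^2 kt=\sinh^2 lt+\cosh^2 kt$, giving $|n_1|=|n_2|=1$. For the six orthogonality relations I would distinguish two cases. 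Whenever the two vectors use different angular vectors in a given block, that block contributes zero by orthogonality of the rotated frame; this disposes of $\langle e_1,e_2\rangle$, $\langle e_1,n_1\rangle$, $\langle e_2,n_2\rangle$ and $\langle n_1,n_2\rangle$ block by block. The remaining pairs $\langle e_1,n_2\rangle$ and $\langle e_2,n_1\rangle$ use the \emph{same} angular vector in both blocks, so each collapses to a difference of two amplitude products, e.g. $\sinh lt\cosh kt-\cosh kt\sinh lt=0$, the opposite signs from the two blocks producing the cancellation.

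Finally, four mutually orthonormal vectors in the four-dimensional space $T_x\E^4$ automatically form an orthonormal basis; as $e_1,e_2$ span the tangent space, $n_1,n_2$ must span $N_xFS_{k,l}$. There is no conceptual obstacle here: the only real work is the bookkeeping for the orthogonality relations, and the main point is to arrange it through the $\R^2\oplus\R^2$ block decomposition (equivalently, via $\R^4=\C^2$ with $u=\frac{1}{r}(k\sinh lt\,e^{il\theta},\,l\cosh kt\,e^{ik\theta})$) so that every cancellation is either orthogonality of a rotated angular frame or the elementary identity $\cosh^2-\sinh^2=1$, rather than an expansion of sixteen trigonometric--hyperbolic products by hand.
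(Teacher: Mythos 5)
Your proof is correct and is exactly what the paper intends: the paper dismisses this proposition as ``a direct verification,'' and your computation (differentiating $u$, checking $|u_t|=|u_\theta|=\rho(t)$, $u_t\perp u_\theta$ via $\cosh^2-\sinh^2=1$, and verifying the six orthogonality relations through the $\R^2\oplus\R^2$ block structure) is precisely that verification, cleanly organized. No gaps.
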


The proof of this proposition is a direct verification.
In the sequel, all computations are made w.r.t. the basis~\eqref{eq:jac-basis}. The (local) section $f_1 n_1+f_2 n_2$ of $N(FS_{k,l})$ is denoted by $\left[\begin{smallmatrix} f_1\\ f_2 \end{smallmatrix}\right]$.

\begin{proposition}\label{pr:jac-simons}
The matrix of the Simons operator $\mathcal B\colon\Gamma(N(FS_{k,l}))\to\Gamma(N(FS_{k,l}))$ in the basis $n_1,n_2$ is given by
$$
\mathcal B=
\frac{2}{\rho(t)^2}\begin{bmatrix}
a(t)^2 & 0\\
0 & c(t)^2
\end{bmatrix},
$$
where
$$
a(t)=\frac{k^2 l^2}{r^2\rho(t)^2}(k\cosh lt\cosh kt-l\sinh lt\sinh kt),\quad
c(t)=\frac{k^2 l^2}{r^2\rho(t)^2}(l\cosh lt\cosh kt-k\sinh lt\sinh kt).
$$
\end{proposition}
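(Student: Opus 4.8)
The plan is to read off the second fundamental form $b_{ij}=B(e_i,e_j)$ in the orthonormal frame of Proposition~\ref{pr:jac-basis} and then assemble $\mathcal B$ directly from its definition. Writing $b_{ij}=\sum_{\alpha}h_{ij}^\alpha n_\alpha$ with $h_{ij}^\alpha=\langle b_{ij},n_\alpha\rangle$, the matrix entry of $\mathcal B$ in the basis $n_1,n_2$ is $\mathcal B_{\alpha\beta}=\sum_{i,j}h_{ij}^\alpha h_{ij}^\beta$, so the whole computation reduces to finding the six coefficients $h_{ij}^\alpha$. Since $z=t+i\theta$ is conformal with factor $\rho(t)^2$ and $\rho$ depends on $t$ only, differentiating $e_1=u_\theta/\rho$ and $e_2=u_t/\rho$ with the flat ambient connection gives $b_{11}=u_{\theta\theta}^\bot/\rho^2$, $b_{12}=u_{t\theta}^\bot/\rho^2$ and $b_{22}=u_{tt}^\bot/\rho^2$. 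Minimality, equivalently the harmonicity $u_{tt}+u_{\theta\theta}=0$ of the conformal immersion~\eqref{eq:intro-param1}, then gives $b_{22}=-b_{11}$, so all the data is carried by $u_{\theta\theta}$ and $u_{t\theta}$.

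First I would differentiate~\eqref{eq:intro-param1} twice and pair $u_{\theta\theta}$ and $u_{t\theta}$ against $n_1,n_2$ from~\eqref{eq:jac-basis} (the projection $\bot$ may be dropped since $n_\alpha$ are normal). The mechanism that makes this tractable is the $\mathbb S^1$-invariance of $FS_{k,l}$: in every pairing the $\theta$-dependent factors group into blocks $\cos^2+\sin^2$ that collapse to $1$, or blocks $\cos\sin-\sin\cos$ that cancel, so all four inner products are functions of $t$ alone. Carrying this out, I expect $\langle u_{\theta\theta},n_1\rangle=-\rho\,a(t)$ and $\langle u_{t\theta},n_2\rangle=\rho\,c(t)$ with $a,c$ as in the statement, together with the two vanishings $\langle u_{\theta\theta},n_2\rangle=0$ and $\langle u_{t\theta},n_1\rangle=0$.

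Dividing by $\rho^2$ and using $b_{22}=-b_{11}$, this translates into $h_{11}^1=-a/\rho$, $h_{22}^1=a/\rho$, $h_{12}^2=c/\rho$, and $h_{12}^1=h_{11}^2=h_{22}^2=0$. Substituting into $\mathcal B_{\alpha\beta}=\sum_{i,j}h_{ij}^\alpha h_{ij}^\beta$ yields $\mathcal B_{11}=2a^2/\rho^2$, $\mathcal B_{22}=2c^2/\rho^2$, and, because of the trace-free relations, $\mathcal B_{12}=2h_{11}^1h_{11}^2+2h_{12}^1h_{12}^2=0$. This is exactly the asserted matrix.

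The main obstacle is organizational rather than conceptual: controlling the hyperbolic-trigonometric bookkeeping, and in particular verifying the two mixed vanishings $\langle u_{\theta\theta},n_2\rangle=0$ and $\langle u_{t\theta},n_1\rangle=0$, since these are precisely what force $\mathcal B$ to be diagonal in the frame~\eqref{eq:jac-basis}. As a consistency check I would confirm that $\tr\mathcal B=2(a^2+c^2)/\rho^2$ agrees with the squared norm $|B|^2$ of the second fundamental form computed independently, and note that the same symmetry responsible for the $\theta$-independence of the pairings is what produces the off-diagonal cancellation.
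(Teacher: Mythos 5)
Your proposal is correct and follows essentially the same route as the paper's proof: compute $b_{11}=u_{\theta\theta}^\bot/\rho^2$, $b_{12}=u_{t\theta}^\bot/\rho^2$, use $b_{22}=-b_{11}$ from minimality, and assemble $\mathcal B_{\alpha\beta}=\sum_{i,j}\langle b_{ij},n_\alpha\rangle\langle b_{ij},n_\beta\rangle$, with the $\theta$-dependence collapsing exactly as you describe. If anything, you are slightly more careful than the paper in noting that the normal projection on $u_{\theta\theta}$ and $u_{t\theta}$ can be dropped only because the pairings are taken against the normal vectors $n_\alpha$.
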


\begin{proof}
We have
$$
b_{11}=\frac{u_{\theta\theta}}{\rho(t)^2}=-\frac{kl}{r\rho(t)^2}\begin{pmatrix}
l\sinh lt\cos l\theta\\
l\sinh lt\sin l\theta\\
k\cosh kt\cos k\theta\\
k\cosh kt\sin k\theta
\end{pmatrix},\quad
b_{12}=\frac{u_{t\theta}}{\rho(t)^2}=\frac{kl}{r\rho(t)^2}\begin{pmatrix}
l\cosh lt\sin l\theta\\
-l\cosh lt\cos l\theta\\
k\sinh kt\sin k\theta\\
-k\sinh kt\cos k\theta
\end{pmatrix},
$$
and $b_{22}=-b_{11}$ by minimality.
The entry $\mathcal B_{\alpha\beta}$ of the matrix $\mathcal B$ equals $\sum\limits_{i,j=1}^2\langle b_{ij},n_\alpha\rangle\langle b_{ij},n_\beta\rangle$ and the result follows by a direct computation.
\end{proof}

\begin{proposition}\label{pr:jac-deltabot}
The operator $\Delta^\bot\colon\Gamma(N(FS_{k,l}))\to\Gamma(N(FS_{k,l}))$ in the basis $n_1,n_2$ is given by
$$
\Delta^\bot\begin{bmatrix} f_1 \\ f_2 \end{bmatrix}=
\frac{1}{\rho(t)^2}\begin{bmatrix}
\Delta_0 f_1+2b(t)\rho(t)\partial_\theta f_2-b(t)^2 f_1\\
\Delta_0 f_2-2b(t)\rho(t)\partial_\theta f_1-b(t)^2 f_2
\end{bmatrix},
$$
where
$$
b(t)=\frac{k^2 l^2}{r^2\rho(t)^2}(l\sinh kt\cosh kt+k\sinh lt\cosh lt)
$$
and $\Delta_0=\partial_t^2+\partial_\theta^2$ is the flat Laplacian.
\end{proposition}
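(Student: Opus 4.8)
The plan is to use the conformal coordinate $z=t+i\theta$ furnished by Proposition~\ref{pr:jac-basis} and to compute the rough Laplacian~\eqref{eq:def-laplace} in two stages: first reduce it to a flat second-order operator in $(t,\theta)$, and then feed in the normal connection form read off from~\eqref{eq:jac-basis}. The whole computation is the vector-bundle analogue of the scalar identity $\Delta f=\rho(t)^{-2}\Delta_0 f$, and it mirrors~\cite[\S3.1]{morozov2022index}.

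\emph{Reduction to a flat operator.} Writing $e_1=\rho^{-1}\partial_\theta$ and $e_2=\rho^{-1}\partial_t$, I would expand the defining trace $\sum_i\bigl(\nabla^\bot_{e_i}\nabla^\bot_{e_i}X-\nabla^\bot_{\nabla^\top_{e_i}e_i}X\bigr)$ and show that the first-order terms cancel. Concretely, since $\rho=\rho(t)$ one gets $\nabla^\top_{e_2}e_2=0$ and $\nabla^\top_{e_1}e_1=-\rho'\rho^{-2}e_2$, and the term $\nabla^\bot_{\nabla^\top_{e_1}e_1}X$ is exactly what cancels the extra $\partial_t(\rho^{-1})$-contribution that appears when $\nabla^\bot_{e_2}\nabla^\bot_{e_2}$ is unravelled. (This is the bundle version of the vanishing $\sum_i\Gamma^k_{ii}=0$ in isothermal coordinates.) One is left with $\Delta^\bot X=\rho^{-2}\bigl(\nabla^\bot_{\partial_t}\nabla^\bot_{\partial_t}X+\nabla^\bot_{\partial_\theta}\nabla^\bot_{\partial_\theta}X\bigr)$, a purely $(t,\theta)$-computation with all Christoffel data removed.

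\emph{The normal connection form.} Because the ambient connection on $\E^4$ is flat, $\nabla^\bot_{\partial_\bullet}n_\alpha=(\partial_\bullet n_\alpha)^\bot$, and since $n_1,n_2$ is an orthonormal frame of $N(FS_{k,l})$ the entire normal connection is encoded in the single scalar $\langle\partial_\bullet n_1,n_2\rangle$. This explicit evaluation is the heart of the proof and the step I expect to be most laborious. Differentiating $n_1$ from~\eqref{eq:jac-basis} and pairing with $n_2$, the $\theta$-independent prefactor $kl/(r\rho)$ pulls out, its $t$-derivative drops because $n_1\perp n_2$, and the angular terms collapse via $\cos^2+\sin^2=1$; one finds $\langle\partial_t n_1,n_2\rangle=0$ and $\langle\partial_\theta n_1,n_2\rangle=-b(t)$, with $b$ exactly as in the statement. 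Differentiating $\langle n_1,n_2\rangle=0$ then yields $\nabla^\bot_{\partial_t}n_\alpha=0$, $\nabla^\bot_{\partial_\theta}n_1=-b\,n_2$ and $\nabla^\bot_{\partial_\theta}n_2=b\,n_1$.

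\emph{Assembly.} For $X=f_1 n_1+f_2 n_2$ the $t$-iterate is immediate, $\nabla^\bot_{\partial_t}\nabla^\bot_{\partial_t}X=(\partial_t^2 f_1)n_1+(\partial_t^2 f_2)n_2$, while expanding the $\theta$-iterate and using $\partial_\theta b=0$ produces the first-order cross terms, whose coefficient is twice the normal connection coefficient $\langle\partial_\theta n_1,n_2\rangle=-b(t)$, together with the zeroth-order term $-b(t)^2 f_\alpha$. Summing the two iterates, dividing by $\rho^2$ as in the first paragraph, and recalling $\Delta_0=\partial_t^2+\partial_\theta^2$ yields the stated matrix form of $\Delta^\bot$. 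The only genuine obstacle is the projection computation of $\langle\partial_\theta n_1,n_2\rangle$; everything else is the conformal bookkeeping of the first paragraph and a mechanical expansion.
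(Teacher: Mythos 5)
Your proof is correct and follows essentially the same route as the paper's: both arguments reduce everything to the single normal connection coefficient $\langle\nabla_{\partial_\theta} n_1,n_2\rangle=-b(t)$ read off from the explicit frame~\eqref{eq:jac-basis}, the only organizational difference being that the paper expands $\Delta^\bot(fn)=f\Delta^\bot n+(\Delta f)n+2\sum_i(e_i f)\nabla^\bot_{e_i}n$ in the orthonormal frame $e_1,e_2$ (precomputing $\Delta^\bot n_\alpha=-\tfrac{b^2}{\rho^2}n_\alpha$ and $\nabla^\bot_{e_2}n_\alpha=0$), whereas you first flatten the rough Laplacian to $\rho^{-2}\bigl(\nabla^\bot_{\partial_t}\nabla^\bot_{\partial_t}+\nabla^\bot_{\partial_\theta}\nabla^\bot_{\partial_\theta}\bigr)$ and then iterate coordinate covariant derivatives; these are the same computation arranged differently.

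One caveat you should be aware of: what your assembly actually produces is the matrix with cross terms $\pm 2b(t)\,\partial_\theta f_{2,1}$, \emph{without} the factor $\rho(t)$ that appears in the printed statement, so your closing claim that this ``yields the stated matrix form'' is literally off by that factor. The discrepancy is not a flaw in your computation but a typo in the proposition as printed: following the paper's own proof (with $e_1 f=\rho^{-1}\partial_\theta f$ and $\nabla^\bot_{e_1}n_2=\tfrac{b}{\rho}n_1$) gives the cross term $\tfrac{2b}{\rho^2}\partial_\theta f_2$, not $\tfrac{2b\rho}{\rho^2}\partial_\theta f_2$, and the $\rho$-free version is the one consistent with the rest of the paper — the potential matrix $A(m,t)$ in~\eqref{eq:fs-pot} has off-diagonal entries $-2mb(t)$, and the Jacobi fields~\eqref{eq:fs-jacfields} coming from $\partial_3,\partial_4$ solve the resulting Sturm--Liouville system only with the $\rho$-free coefficient. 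So your derivation is the correct one; it would be worth stating explicitly that the printed $2b(t)\rho(t)\partial_\theta$ should read $2b(t)\partial_\theta$.
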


\begin{proof}
It follows easily from~\eqref{eq:def-laplace} that for any $n\in\Gamma(N(FS_{k,l}))$ and $f\in C^\infty(FS_{k,l})$ we have
\begin{equation}\label{eq:jac-laprod}
\Delta^\bot(fn)=f\Delta^\bot n+(\Delta f)n+2\sum_{i=1}^2 (e_i f)\nabla_{e_i}^\bot n.
\end{equation}
Therefore it suffices to calculate $\Delta^\bot n_\alpha$ and $\nabla_{e_j}^\bot n_\alpha$ for $i,\alpha=1,2$. We have
$$
\nabla_{e_1}n_1=\frac{kl}{r\rho(t)^2}\begin{pmatrix}
l\sinh kt\sin l\theta\\
-l\sinh kt\cos l\theta\\
-k\cosh lt\sin k\theta\\
k\cosh lt\cos k\theta
\end{pmatrix},\quad
\nabla_{e_2}n_1=-\frac{\rho'(t)}{\rho(t)^2}n_1+\frac{kl}{r\rho(t)^2}\begin{pmatrix}
-k\cosh kt\cos l\theta\\
-k\cosh kt\sin l\theta\\
l\sinh lt\cos k\theta\\
l\sinh lt\sin k\theta
\end{pmatrix}.
$$
From this it is easy to see that
$$
\langle\nabla_{e_1}^\bot n_1,n_2\rangle=-\langle\nabla_{e_1}^\bot n_2,n_1\rangle=-\frac{b(t)}{\rho(t)}\quad\text{and}\quad
\langle\nabla_{e_i}^\bot n_\alpha,n_\beta\rangle=0\text{ for all other $i,\alpha,\beta=1,2$.}
$$
Hence,
\begin{gather}
\nabla_{e_1}^\bot n_1=-\frac{b(t)}{\rho(t)}n_2,\quad
\nabla_{e_1}^\bot n_2=\frac{b(t)}{\rho(t)}n_1,\quad\label{eq:jac-e1n}\\
\nabla_{e_2}^\bot n_1=\nabla_{e_2}^\bot n_2=0.\label{eq:jac-e2n}
\end{gather}
Further, we have
$$
\langle\nabla_{e_1}^\top e_1,e_1\rangle=\frac{1}{2}e_1\langle e_1,e_1\rangle=0,\quad
\langle\nabla_{e_2}^\top e_2,e_1\rangle=\frac{1}{\rho(t)^3}\langle u_{tt},u_\theta\rangle=0,
$$
which together with~\eqref{eq:jac-e2n} gives
\begin{equation}\label{eq:jac-eenzero}
\nabla_{\nabla_{e_i}^\top e_i}^\bot n_\alpha=0\quad\forall i,\alpha=1,2.
\end{equation}
Combining~\eqref{eq:def-laplace},~\eqref{eq:jac-e1n}--\eqref{eq:jac-eenzero}, we get
\begin{equation}\label{eq:jac-deltan}
\Delta^\bot n_1=\nabla_{e_1}^\bot\nabla_{e_1}^\bot n_1=-\frac{b(t)^2}{\rho(t)^2}n_1,\quad
\Delta^\bot n_2=\nabla_{e_1}^\bot\nabla_{e_1}^\bot n_1=-\frac{b(t)^2}{\rho(t)^2}n_2.
\end{equation}
The desired result follows from~\eqref{eq:jac-laprod},~\eqref{eq:jac-e1n},~\eqref{eq:jac-deltan} and the relation $\rho(t)^2\Delta=\Delta_0$.
\end{proof}

\begin{proposition}\label{pr:jac-jac}
The Jacobi stability operator $L\colon\Gamma(N(FS_{k,l}))\to\Gamma(N(FS_{k,l}))$ in the basis $n_1,n_2$ has the form
$$
L\begin{bmatrix} f_1 \\ f_2 \end{bmatrix}=
\frac{1}{\rho(t)^2}\begin{bmatrix}
\Delta_0 f_1+2b(t)\rho(t)\partial_\theta f_2+(2a(t)^2-b(t)^2) f_1\\
\Delta_0 f_2-2b(t)\rho(t)\partial_\theta f_1+(2c(t)^2-b(t)^2) f_2
\end{bmatrix}.
$$
\end{proposition}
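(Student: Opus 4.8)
The plan is to combine the two preceding propositions through the defining relation $L = \Delta^\bot + \mathcal{B}$ from \S\ref{sec:def}. Since both $\Delta^\bot$ and $\mathcal B$ have already been expressed in the basis $n_1, n_2$, the statement follows by componentwise addition of their matrices, with no further geometric input required.

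Concretely, I would write a section of $N(FS_{k,l})$ as $\left[\begin{smallmatrix} f_1\\ f_2 \end{smallmatrix}\right]$ and add the expression for $\Delta^\bot\left[\begin{smallmatrix} f_1\\ f_2 \end{smallmatrix}\right]$ from Proposition~\ref{pr:jac-deltabot} to the action of $\mathcal B$ from Proposition~\ref{pr:jac-simons}. The key observation is that both contributions carry the common factor $1/\rho(t)^2$: the Simons operator equals $\frac{2}{\rho(t)^2}\mathrm{diag}(a(t)^2, c(t)^2)$, while the Laplacian in the normal bundle already has $1/\rho(t)^2$ out front. Hence the sum can be written with a single such factor, exactly as claimed. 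It then remains only to collect the zeroth-order terms: in the first component $-b(t)^2 f_1$ from $\Delta^\bot$ combines with $2a(t)^2 f_1$ from $\mathcal B$ into $(2a(t)^2 - b(t)^2)f_1$, and symmetrically in the second component $-b(t)^2 f_2$ combines with $2c(t)^2 f_2$ into $(2c(t)^2 - b(t)^2)f_2$. The leading terms $\Delta_0 f_1, \Delta_0 f_2$ and the off-diagonal first-order terms $+2b(t)\rho(t)\partial_\theta f_2$ and $-2b(t)\rho(t)\partial_\theta f_1$ are left unchanged, since $\mathcal B$ is purely multiplicative and diagonal.

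There is essentially no obstacle here: the proposition is a bookkeeping step that merely assembles the already-established computations. All the substantive work — finding the functions $a, b, c$ and the matrices of $\Delta^\bot$ and $\mathcal B$ — has already been carried out in Propositions~\ref{pr:jac-simons} and~\ref{pr:jac-deltabot}, so the only thing to confirm is the alignment of the common prefactor and the correct grouping of the multiplicative terms.
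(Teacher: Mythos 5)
Your proposal is correct and matches the paper's own proof, which simply states that the result follows immediately from Propositions~\ref{pr:jac-simons} and~\ref{pr:jac-deltabot} via $L=\Delta^\bot+\mathcal B$. Your componentwise bookkeeping of the common factor $1/\rho(t)^2$ and the grouping $(2a(t)^2-b(t)^2)$, $(2c(t)^2-b(t)^2)$ is exactly the intended (and only necessary) verification.
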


\begin{proof}
It immediately follows from Propositions~\ref{pr:jac-simons} and~\ref{pr:jac-deltabot}.
\end{proof}

\begin{remark}\label{rem:jac-params}
All computations in this section are made w.r.t. the parametrization~\eqref{eq:intro-param1}.
However, it is clear that the same computations work for the parametrization~\eqref{eq:intro-param2} as well: just substitute $k=p$ and $l=1$ in all formulas and use $T_p,r_p$ instead of $T_{k,l},r_{k,l}$ respectively.
By abuse of notation we use the same notation for both parametrizations.
Thus, for example, the function $\rho(t)$ is either $\frac{kl}{r_{k,l}}\sqrt{\sinh^2 lt+\cosh^2 kt}$ or $\frac{p}{r_p}\sqrt{\sinh^2 t+\cosh^2 pt}$ depending on the context.
We hope this will not cause any confusion since the parametrizations~\eqref{eq:intro-param1} and~\eqref{eq:intro-param2} are used in different sections.
\end{remark}

\section{Index of $FS_{k,l}$}\label{sec:fs}

Throughout this section we use parametrization~\eqref{eq:intro-param1}.

\subsection{The upper bound}\label{sec:fs-ub}

In this section we prove the following

\begin{proposition}\label{pr:fs-ub}
We have $\Ind(\widetilde{FS}_{k,l})+\Nul(\widetilde{FS}_{k,l})\leqslant 2k+3$ and $\Ind(\widetilde{FS}_{k,l})\leqslant 2k-1$.
\end{proposition}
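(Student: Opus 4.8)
Prove that $\operatorname{Ind}(\widetilde{FS}_{k,l}) + \operatorname{Nul}(\widetilde{FS}_{k,l}) \leq 2k+3$ and $\operatorname{Ind}(\widetilde{FS}_{k,l}) \leq 2k-1$.

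**Setup.** The Jacobi operator $L$ on $N(FS_{k,l})$ is computed in Proposition 4.4:
$$
L\begin{bmatrix} f_1 \\ f_2 \end{bmatrix}=
\frac{1}{\rho(t)^2}\begin{bmatrix}
\Delta_0 f_1+2b(t)\rho(t)\partial_\theta f_2+(2a(t)^2-b(t)^2) f_1\\
\Delta_0 f_2-2b(t)\rho(t)\partial_\theta f_1+(2c(t)^2-b(t)^2) f_2
\end{bmatrix}.
$$

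**Key structural observations:**

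1. $\rho(t)^2$ is a positive conformal factor, so the index equals the number of negative eigenvalues of $\rho^2 L$, which is the operator WITHOUT the $\frac{1}{\rho^2}$ prefactor.

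2. The coupling between $f_1$ and $f_2$ happens through the $b(t)\rho(t)\partial_\theta$ terms. This is a FIRST-ORDER coupling. The standard trick (from Ejiri-Micallef Thm 4.5) is to introduce the complex-valued function $f = f_1 + i f_2$ (or some combination), which diagonalizes the $\partial_\theta$ coupling.

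**The plan:**

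First, I would combine the two components into a single complex function. Let $f = f_1 + if_2$. Then the coupling terms $2b\rho\partial_\theta f_2$ and $-2b\rho\partial_\theta f_1$ combine. Specifically:
- From the first equation: $\Delta_0 f_1 + 2b\rho\partial_\theta f_2 + (\ldots)f_1$
- From the second (times $i$): $i\Delta_0 f_2 - 2ib\rho\partial_\theta f_1 + i(\ldots)f_2$

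The coupling becomes $2b\rho\partial_\theta f_2 - 2ib\rho\partial_\theta f_1 = -2ib\rho\partial_\theta(f_1 + if_2) = -2ib\rho\partial_\theta f$.

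So the off-diagonal first-order term becomes a clean $-2ib(t)\rho(t)\partial_\theta f$ term. This is promising. However, the potential terms $2a^2 - b^2$ and $2c^2 - b^2$ differ between the two equations, so I can't fully decouple. Let me reconsider.

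**Refined plan — separation of variables via Fourier modes in $\theta$.**

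Since the coefficients $a(t), b(t), c(t), \rho(t)$ depend only on $t$, and the surface is $\theta$-periodic (for the orientable cover $\widetilde{FS}_{k,l}$, $\theta \in \R/2\pi\Z$ when $k$ odd, or the doubled domain), I would expand
$$
f_1(t,\theta) = \sum_{m} f_{1,m}(t) e^{im\theta}, \qquad f_2(t,\theta) = \sum_{m} f_{2,m}(t) e^{im\theta}.
$$

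For each Fourier mode $m \in \Z$, the operator $\rho^2 L$ acts as a $2\times 2$ system of ODEs in $t$:
$$
(\rho^2 L)_m \begin{bmatrix} f_{1,m} \\ f_{2,m} \end{bmatrix} = \begin{bmatrix} f_{1,m}'' - m^2 f_{1,m} + 2imb\rho\, f_{2,m} + (2a^2-b^2)f_{1,m} \\ f_{2,m}'' - m^2 f_{2,m} - 2imb\rho\, f_{1,m} + (2c^2-b^2)f_{2,m} \end{bmatrix}.
$$

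The index of $FS_{k,l}$ is the total number of negative eigenvalues, summed over all Fourier modes $m$. This reduces a 2D problem to a family of 1D ODE eigenvalue problems on $t \in \R$.

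**Bounding the contribution from each mode.**

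The main work is to show that only finitely many modes contribute negative eigenvalues, and to count precisely how many.

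For each mode $m$, I would analyze the $2\times2$ Schrödinger-type operator on the line $\R$ with potentials involving $2a^2 - b^2$, $2c^2 - b^2$, and the coupling $\pm 2imb\rho$. The key facts I'd need:
- As $t \to \pm\infty$, the potentials $a, b, c \to 0$ (exponential decay, since the surface is asymptotically flat/catenoidal), so the essential spectrum starts at $m^2 \geq 0$. Thus negative eigenvalues come only from the "well" near $t=0$.
- Large $|m|$: the $-m^2$ term dominates and pushes everything positive, so no negative eigenvalues. This gives finiteness.

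**Counting the negative eigenvalues — the combinatorial heart.**

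This is where I expect the main obstacle. I need to identify EXACTLY which modes $m$ contribute, and how many negative eigenvalues each gives, to reach the bound $2k-1$ (and $2k+3$ with nullity).

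The strategy inspired by Ejiri-Micallef (Theorem 4.5): relate the Jacobi operator $L$ to the operator governing the GAUSS MAP or the holomorphic structure. In $\E^4$, a minimal surface has a holomorphic Gauss map into a product of spheres / a quadric. The key inequality is:
$$
\operatorname{Ind}(\Sigma) + \operatorname{Nul}(\Sigma) \leq \text{(something in terms of the degree of the Gauss map)}.
$$

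Concretely, the total curvature is $\TC(\widetilde{FS}_{k,l}) = -4\pi k$, so $-\frac{1}{2\pi}\TC = 2k$. The Ejiri-Micallef-type comparison should relate $\operatorname{Ind} + \operatorname{Nul}$ to the number of negative/zero eigenvalues of a comparison operator $\Delta_0 + (\text{potential})$ whose structure is governed by the $2a^2$ and $2c^2$ terms.

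**Concrete execution of the upper bound.**

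I would use the following comparison. On each component, $\rho^2 L$ involves a Schrödinger operator $-\partial_t^2 + m^2 + V(t)$ (up to the coupling and sign of potential). The potentials $-2a^2$ and $-2c^2$ (the attractive wells) are related to the conformal factor. I'd establish pointwise bounds like $2a(t)^2, 2c(t)^2 \leq$ (explicit function), then use a Sturm-type oscillation / comparison argument, OR explicitly solve the model: the Jacobi fields (kernel elements) are known explicitly from Claim 3.1 — the projections of constant vector fields in $\E^4$ give $\operatorname{Nul} \geq 4$, and these appear in SPECIFIC Fourier modes.

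So I'd proceed:
1. Identify the explicit Jacobi fields (nullity generators): the four coordinate translations in $\E^4$ project to Jacobi fields. Computing these projections against $n_1, n_2$ shows they live in Fourier modes $m = \pm(k-l)$ and $m = \pm(k+l)$ (or similar low modes dictated by the $\cos l\theta, \sin k\theta$ structure of the parametrization). This pins down where the kernel sits.
2. For each mode $m$, count negative eigenvalues by an explicit ODE analysis — show that modes $|m| \leq k-1$ contribute, and sum. The exact arithmetic here (yielding $2k-1$) is the delicate combinatorial step.

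**The hard part.** The main obstacle is the precise eigenvalue count in step 2: showing that for the right range of Fourier modes the $2\times 2$ ODE system has exactly the claimed number of negative eigenvalues, and that the separate "$+\operatorname{Nul}$" bound $2k+3$ comes from including the zero modes (the four Jacobi fields contribute $4$, giving $2k-1+4 = 2k+3$). The coupling term $2imb\rho$ makes the $2\times2$ system non-diagonal, so I cannot treat each scalar equation independently; I'd need either to diagonalize the system (perhaps the substitution $g_\pm = f_1 \pm i f_2$ works better per-mode, turning the real coupling into a shift $m \mapsto m \mp (\text{integer})$ that aligns with the geometry), or to use a variational/monotonicity argument bounding the count from above without exact diagonalization. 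I expect the clean approach is the $g_\pm = f_1 \pm i f_2$ substitution, which should convert the coupled system into two DECOUPLED scalar Schrödinger operators (because $b(t)\rho(t)$ has a special form making the coupling a pure gauge/shift), after which the count reduces to two independent Sturm-Liouville problems whose negative-eigenvalue counts I can read off mode-by-mode.
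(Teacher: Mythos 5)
Your plan has a genuine gap, and it sits exactly at the step you yourself flag as ``the hard part.'' First, the proposed decoupling fails: writing the mode-$m$ system with potential matrix $A(m,t)$ as in~\eqref{eq:fs-pot}, we have
$$
A(m,t)=(b^2+m^2-a^2-c^2)I-(a^2-c^2)\begin{pmatrix}1&0\\0&-1\end{pmatrix}-2mb\begin{pmatrix}0&1\\1&0\end{pmatrix},
$$
and since $a(t)\ne c(t)$ the two non-scalar summands do not commute and their relative weight varies with $t$; the substitution $g_\pm=f_1\pm if_2$ therefore sends the coupling onto $g_\mp$ (not $g_\pm$) and cannot turn the system into two decoupled scalar Schr\"odinger operators, nor is the coupling a ``pure gauge/shift.'' Second, even granting some reduction, an \emph{upper} bound by your route requires proving that modes with $m\geqslant k$ contribute no negative eigenvalues, that each mode $m\in[1,k-1]$ contributes at most two, that $m=0$ contributes at most one, and additionally (for the $2k+3$ bound) that the nullity is at most $4$ --- i.e.\ an exact spectral count for a genuinely coupled $2\times 2$ Sturm--Liouville system on the line. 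None of this is carried out, and it is known to be delicate: for such vector-valued problems the eigenvalues $\lambda_i(m)$ need not be monotone in $m$ (see the remark after Conjecture~\ref{con:ifs-lb}), so naive comparison arguments break down. Your bookkeeping is also reversed: you propose to get $2k+3$ as ``$2k-1$ plus the four Jacobi fields,'' but that presupposes $\Nul\leqslant 4$, which is itself part of what must be proved.

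For contrast, the paper uses separation of variables only for the \emph{lower} bound (Proposition~\ref{pr:fs-lb}), where exhibiting negative test directions suffices. The upper bound is obtained by an entirely different mechanism: pass to the Huber--Osserman compactification, view $(\nabla^{1,0}n_\alpha)^\top$ as multiples of a holomorphic section $\omega_0$ of $\mathcal L=\tau^{0,1}\otimes\Lambda^{1,0}\Sigma_c$ (Proposition~\ref{pr:fs-nablan}), and run the Ejiri--Micallef argument; crucially, the bare Ejiri--Micallef bound gives only $4k+2$, and the improvement to $\Ind+\Nul\leqslant-\frac{1}{2\pi}\TC+3=2k+3$ comes from exploiting the antiholomorphic involution $\sigma(z)=-\bar z$ together with the compatible involution $\iota$ on normal sections (Proposition~\ref{pr:fs-d2iota}), which forces the relevant Hermitian pairings to be real or purely imaginary. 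Then $\Ind\leqslant 2k-1$ follows from $\Nul\geqslant 4$ (Claim~\ref{cl:def-jacfields}). The exact mode-by-mode count you are after is never established directly; it only emerges a posteriori from sandwiching the lower bound against this holomorphic upper bound.
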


The proof is based on the ideas of the paper~\cite{ejiri2008comparison}. However, applying~\cite[Theorem~3.2]{ejiri2008comparison} directly, we get
\begin{gather}\label{ineq:not}
\Ind(\widetilde{FS}_{k,l})+\Nul(\widetilde{FS}_{k,l})\leqslant
-\frac{1}{\pi}\TC(\widetilde{FS}_{k,l})+2=4k+2,
\end{gather}
which is not satisfactory. One can hope that the analog of the Ejiri-Micallef inequality for non-orintable surfaces in $\E^n$ would be satisfactory in the case of $FS_{k,l}$ (for those of them who are non-orientable). In Appendix~\ref{sec:nonor} we obtain this analog which in the case of $FS_{k,l}$ yields:
$$
\Ind({FS}_{k,l})+\Nul({FS}_{k,l})\leqslant
-\frac{1}{\pi}\TC(FS_{k,l})+3=2k+3.
$$
However, this is not satisfactory neither. 

Our idea is to improve the upper bound~\eqref{ineq:not} using the symmetry of $\widetilde{FS}_{k,l}$, as it is done in~\cite[Theorem~4.5]{ejiri2008comparison}.

First of all, let us recall the classical Hubert-Osserman compactification construction.
Let $\Sigma$ be a minimal surface in $\E^n$ without boundary and with finite total curvature.
Then $\Sigma$ is conformally equivalent to a compact Riemann surface $\Sigma_c$ with finitely many punctures, which correspond to the ends of $\Sigma$.
The subbundle $T\Sigma$ of $\Sigma\times\R^n$ extends to a subbundle $\tau$ of $\Sigma_c\times\R^n$
(actually, the extension is given by the pullback of the tautological bundle over the Grassmanian $\mathrm{Gr}(n,2)$ by the extended Gauss map).
Let $\nu$ be the normal bundle to $\tau$ in $\Sigma_c\times\R^n$; then $\nu|_\Sigma=N\Sigma$ and $\tau\oplus\nu=\Sigma_c\times\R^n$.
Moreover, the quadratic forms $\delta^2 A$ and $\delta^2 E$ extend to $\Gamma(\nu)$ and $\Gamma(\Sigma_c\times\R^n)$ respectively; thus, the indices $\Ind(\Sigma_c)$ and $\Ind_E(\Sigma_c)$ are well-defined.
One can show that (see~\cite[Remark~2]{nayatani1990morse} and also~\cite[Corollary~2]{fischer1985complete})
\begin{equation}\label{eq:fs-sigmacind}
\Ind(\Sigma_c)=\Ind(\Sigma)\quad\text{and}\quad
\Ind_E(\Sigma_c)=\Ind_E(\Sigma)=0.
\end{equation}
For the complexification $\tau_\C$ of $\tau$ we have the decomposition $\tau_\C=\tau^{1,0}\oplus\tau^{0,1}$.
Note that for an antiholomorphic vector bundle $E$ over $\Sigma$ or $\Sigma_c$ the Hermitian metric gives an isomorphism $E\cong \bar E^*$.
Thus we can consider $E$ as a holomorphic vector bundle as well.
In particular, we can consider $T^{0,1}\Sigma$ and $\tau^{0,1}$ as holomorphic vector bundles.
Denote $\mathcal L=\tau^{0,1}\otimes\Lambda^{1,0}\Sigma_c$ and let $H^0(\mathcal L)$ be the set of holomorphic sections of $\mathcal L$.
Also, let us denote $\nabla^{1,0}=\nabla_z\otimes dz$.

Now we specialize to the case where $\Sigma=\widetilde{FS}_{k,l}$.
We have by Proposition~\ref{pr:jac-basis} that $z=t+\theta i$ is a conformal coordinate on $\Sigma$.
Put $\omega_0:=\frac{1}{\rho^2}u_{\bar z}\otimes dz\in\Gamma(T^{0,1}\Sigma\otimes\Lambda^{1,0}\Sigma)$.

\begin{proposition}\label{pr:fs-nablan}
The section $\omega_0$ extends to a holomorphic section of $\mathcal L$. Moreover, we have
$$
(\nabla^{1,0}n_1)^\top=-\rho(t)a(t)\omega_0,\quad
(\nabla^{1,0}n_2)^\top=i\rho(t)c(t)\omega_0.
$$
In particular, $(\nabla^{1,0}n_1,\omega_0)$ is purely real and $(\nabla^{1,0}n_2,\omega_0)$ is purely imaginary, where $(-,-)$ stands for the induced Hermitian product in $\mathcal L$.
\end{proposition}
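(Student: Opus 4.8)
The plan is to treat the two assertions separately: first the two tangential identities (from which the reality and imaginarity of the Hermitian pairings are immediate), and then the holomorphicity of $\omega_0$. Throughout I use the following consequences of Proposition~\ref{pr:jac-basis} and minimality: $z=t+i\theta$ is conformal, so $u_z$ and $u_{\bar z}$ span $\tau^{1,0}$ and $\tau^{0,1}$ respectively; minimality means that $u$ is harmonic, i.e. $u_{z\bar z}=0$, so $u_z$ is holomorphic and $u_{\bar z}$ is anti-holomorphic; and, writing $\langle-,-\rangle$ for the $\C$-bilinear extension of the Euclidean product, one has $\langle u_z,u_z\rangle=\langle u_{\bar z},u_{\bar z}\rangle=0$ and $\langle u_z,u_{\bar z}\rangle=\rho(t)^2/2$.

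For the tangential formulas I would expand $(\nabla_z n_\alpha)^\top$ in the null frame $\{u_z,u_{\bar z}\}$. The coefficient of $u_z$ is a multiple of $\langle\nabla_z n_\alpha,u_{\bar z}\rangle=-\langle n_\alpha,u_{z\bar z}\rangle=0$, which vanishes by harmonicity; hence $(\nabla_z n_\alpha)^\top$ has no $\tau^{1,0}$-part and is a multiple of $u_{\bar z}$, so its tensoring with $dz$ is automatically a multiple of $\omega_0$. Computing the surviving coefficient via $\langle\nabla_z n_\alpha,u_z\rangle=-\langle n_\alpha,u_{zz}\rangle$ gives the clean relation $(\nabla^{1,0}n_\alpha)^\top=-2\langle u_{zz},n_\alpha\rangle\,\omega_0$. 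It then remains to evaluate the Hopf-type coefficients $\langle u_{zz},n_\alpha\rangle$: since $u_{zz}=-\tfrac12(u_{\theta\theta}+iu_{t\theta})$ (using $u_{tt}=-u_{\theta\theta}$) and $u_{\theta\theta}=\rho(t)^2 b_{11}$, $u_{t\theta}=\rho(t)^2 b_{12}$ are the second fundamental form entries computed in the proof of Proposition~\ref{pr:jac-simons}, a direct substitution of $n_1,n_2$ gives $\langle u_{zz},n_1\rangle=\tfrac{\rho(t)a(t)}{2}$, which is real, and $\langle u_{zz},n_2\rangle$ a purely imaginary multiple of $\rho(t)c(t)$, reproducing the two displayed identities. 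Consequently $(\nabla^{1,0}n_\alpha,\omega_0)$ equals this coefficient times the positive number $(\omega_0,\omega_0)$, so it is real for $\alpha=1$ and purely imaginary for $\alpha=2$; here I also use that, by the vanishing of the $\tau^{1,0}$-part, the $\mathcal L$-component of $\nabla^{1,0}n_\alpha$ is precisely $(\nabla^{1,0}n_\alpha)^\top$.

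For the holomorphicity of $\omega_0$ I would pass through the metric isomorphism $\tau^{0,1}\cong(\tau^{1,0})^*$. Because $u_z$ is a holomorphic frame of $\tau^{1,0}$, its dual $u_z^\vee$ is a holomorphic frame of $(\tau^{1,0})^*$, and $dz$ is a holomorphic frame of $\Lambda^{1,0}\Sigma$. A short computation with the Hermitian metric shows that $u_{\bar z}$ maps to $\tfrac{\rho(t)^2}{2}u_z^\vee$, so that $\omega_0=\rho(t)^{-2}u_{\bar z}\otimes dz$ maps to the constant multiple $\tfrac12 u_z^\vee\otimes dz$ of a product of holomorphic frames and is therefore holomorphic on $\Sigma$. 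I would stress that it is exactly the factor $\rho(t)^{-2}$ in the definition of $\omega_0$ that cancels the conformal factor produced by the isomorphism. Finally, to obtain a section over the compactification $\Sigma_c$, I would invoke the extension of $\tau$ (and hence of $\mathcal L$) recalled before the proposition and check that $\omega_0$ remains bounded at each puncture, so that it extends holomorphically by the removable singularity theorem.

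The main obstacle is the bundle-theoretic half of the holomorphicity claim. One must keep the holomorphic structure placed on the \emph{anti}holomorphic bundle $\tau^{0,1}$ consistent with the Hermitian pairing — in particular track the conjugations and the exact power of $\rho(t)$ — and then analyse the behaviour of $\omega_0$ at the ends to be sure the extension across $\Sigma_c$ is genuinely holomorphic and not merely meromorphic. By contrast, once the identity $\langle n_\alpha,u_{z\bar z}\rangle=0$ is used to discard the $\tau^{1,0}$-part, the tangential formulas reduce to a routine (if slightly lengthy) substitution of the explicit frame~\eqref{eq:jac-basis} and the second fundamental form.
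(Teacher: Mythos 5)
Your proposal is correct and follows essentially the same route as the paper: the tangential identities are obtained by exactly the paper's null-frame expansion, using $\langle\nabla_z n_\alpha,u_{\bar z}\rangle=-\langle n_\alpha,u_{z\bar z}\rangle=0$ to kill the $\tau^{1,0}$-part and reducing to the Hopf-type coefficients $\langle u_{zz},n_\alpha\rangle$, while your extension argument (metric isomorphism sending $\omega_0$ to $\tfrac12 u_z^\vee\otimes dz$, i.e. the paper's $\tfrac12 dz\otimes dz$ under $\tau^{0,1}\otimes\Lambda^{1,0}\Sigma\cong\Lambda^{1,0}\Sigma\otimes\Lambda^{1,0}\Sigma$, followed by decay at the punctures and removable singularities) is the same as the paper's extension-by-zero using $|\tfrac12 dz\otimes dz|=\rho(t)^{-2}\to 0$. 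The only caveat is to compute $u_{t\theta}$ directly rather than quoting the printed formula for $b_{12}$ in Proposition~\ref{pr:jac-simons}, whose sign is off (harmless there, since the Simons operator is quadratic in $b_{ij}$), so that you land on $+i\rho(t)c(t)\omega_0$ rather than its negative.
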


\begin{proof}
The section $\omega_0$ maps to a holomorphic section $\frac{1}{2}dz\otimes dz$ under the isomorphism $T^{0,1}\Sigma\otimes\Lambda^{1,0}\Sigma\cong\Lambda^{1,0}\Sigma\otimes\Lambda^{1,0}\Sigma$.
Since $|\frac{1}{2}dz\otimes dz|=\frac{1}{\rho(t)^2}\to 0$, this section extends by zero to a holomorphic section of $(\tau^{1,0})^*\otimes\Lambda^{1,0}\Sigma_c\cong\mathcal L$, which proves the first part of the proposition.
The second part is a direct computation. We have
\begin{multline*}
(\nabla^{1,0}n_1)^\top=\left(\langle\nabla_z n_1,u_z\rangle\frac{u_{\bar z}}{|u_z|^2}+\langle\nabla_z n_1,u_{\bar z}\rangle\frac{u_z}{|u_{\bar z}|^2}\right)\otimes dz=
-\langle n_1,u_{zz}\rangle\frac{2}{\rho^2}u_{\bar z}\otimes dz=\\
-\langle n_1,u_{tt}-iu_{t\theta}\rangle\frac{1}{\rho^2}u_{\bar z}\otimes dz=
-\rho(t)a(t)\omega_0,
\end{multline*}
where we have used that $u_{z\bar z}=0$ by minimality. A similar computation for $(\nabla^{1,0}n_2)^\top$ completes the proof.
\end{proof}

Define
$$
\mathcal M_{\mathcal L}=\{g\in\mathcal M\colon (g)+(\omega_0)\geqslant 0\},
$$
where $\mathcal M$ is the space of meromorphic functions on $\Sigma_c$ and $(\cdot)$ stands for a divisor. Also define
$$
\sigma\colon\Sigma\to\Sigma,\quad
\sigma(t,\theta)=(-t,\theta),\quad\text{that is,}\quad
\sigma(z)=-\bar z.
$$
Obviously, $\sigma$ is an isometry. Observe that $\sigma$ extends to an antiholomorphic involution of $\Sigma_c$ ($\sigma$ permutes the ends of $\Sigma$). Moreover, the map
$$
C^\infty(\Sigma_c)\otimes_\R\C\to C^\infty(\Sigma_c)\otimes_\R\C,\quad
g\mapsto\sigma^*\bar g
$$
restricts to a linear involution $\mathcal M_{\mathcal L}\to\mathcal M_{\mathcal L}$.
In particular, there exists a basis $\{g_1,\ldots,g_\mu\}$ of $\mathcal M_{\mathcal L}$ consisting of the eigenfunctions of this involution (so that $\sigma^*\bar g_\alpha=\pm g_\alpha$).
Finally, define
\begin{equation}\label{eq:fs-iota}
\iota\colon\Gamma(N\Sigma)\to\Gamma(N\Sigma),\quad
\iota\begin{bmatrix} f_1\\ f_2 \end{bmatrix}=
\begin{bmatrix} -\sigma^*f_1\\ \sigma^*f_2\end{bmatrix}.
\end{equation}
Then $\iota$ extends to a linear involution $\Gamma(\nu)\to\Gamma(\nu)$.
Let $\Gamma_+(\nu),\Gamma_-(\nu)$ be the $+1$ and $-1$ eigenspaces of $\Gamma(\nu)$ w.r.t. this involution.
For a section $X\in\Gamma(\nu)$ denote by $X_\pm=\frac{1}{2}(X\pm\iota X)$ the projection of $X$ onto $\Gamma_\pm(\nu)$.

\begin{proposition}\label{pr:fs-d2iota}
We have $\delta^2 A(X)=\delta^2 A(X_+)+\delta^2 A(X_-)$ for any $X\in\Gamma(\nu)$.
\end{proposition}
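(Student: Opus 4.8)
The plan is to show that the involution $\iota$ leaves the quadratic form $\delta^2 A$ invariant; the statement will then follow from the elementary fact that the $\pm 1$-eigenspaces of an involution preserving a symmetric bilinear form are mutually orthogonal. Denote by $Q$ the symmetric bilinear form polarizing $\delta^2 A$, so that $Q(X,X) = \delta^2 A(X)$; since $\widetilde{FS}_{k,l}$ has no boundary, $Q(X,Y) = -\int_\Sigma\langle LX, Y\rangle\, dA$, extended to $\Gamma(\nu)$ via the compactification. First I would record that $\iota$ is genuinely an involution: because $\sigma^2 = \mathrm{id}$ we have $(\sigma^*)^2 = \mathrm{id}$, and the two sign choices in~\eqref{eq:fs-iota} square to $+1$, so $\iota^2 = \mathrm{id}$ and $\Gamma_\pm(\nu)$ are honest eigenspaces with $X = X_+ + X_-$.

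The main step is to prove that $Q$ is $\iota$-invariant, i.e. $Q(\iota X, \iota Y) = Q(X, Y)$, which I would break into two ingredients. (i) $\iota$ commutes with the Jacobi operator: $\iota L = L\iota$. To see this I would use the explicit expression for $L$ from Proposition~\ref{pr:jac-jac} together with the parities of the coefficient functions under $t\mapsto -t$. Inspecting Propositions~\ref{pr:jac-simons} and~\ref{pr:jac-deltabot} shows that $\rho$, $a$, $c$ are even in $t$ while $b$ is odd. Since $\sigma(t,\theta) = (-t,\theta)$ satisfies $\Delta_0\sigma^* = \sigma^*\Delta_0$ and $\partial_\theta\sigma^* = \sigma^*\partial_\theta$, a direct componentwise computation shows that the sign flip in the first slot of $\iota$ together with the oddness of $b$ makes the off-diagonal terms $\pm 2b\rho\,\partial_\theta$ transform consistently, yielding $L\iota X = \iota L X$. (ii) $\iota$ is an $L^2$-isometry: $\sigma$ is an isometry of $\Sigma$ because the metric $\rho(t)^2(dt^2 + d\theta^2)$ is invariant under $t\mapsto -t$, and since $\iota$ only changes signs of the orthonormal components, $\langle\iota X, \iota Y\rangle = \sigma^*\langle X, Y\rangle$ pointwise; integrating against the $\sigma$-invariant area element gives the equality of the integrated products. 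Combining (i) and (ii), $Q(\iota X, \iota Y) = -\int_\Sigma\langle L\iota X, \iota Y\rangle\,dA = -\int_\Sigma\langle\iota LX, \iota Y\rangle\,dA = -\int_\Sigma\langle LX, Y\rangle\,dA = Q(X, Y)$.

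Finally I would conclude by orthogonality. For $X_+\in\Gamma_+(\nu)$ and $X_-\in\Gamma_-(\nu)$ the invariance of $Q$ gives $Q(X_+, X_-) = Q(\iota X_+, \iota X_-) = Q(X_+, -X_-) = -Q(X_+, X_-)$, hence $Q(X_+, X_-) = 0$. Expanding $\delta^2 A(X) = Q(X_+ + X_-, X_+ + X_-) = Q(X_+, X_+) + 2Q(X_+, X_-) + Q(X_-, X_-)$ then yields $\delta^2 A(X) = \delta^2 A(X_+) + \delta^2 A(X_-)$.

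I expect the main (though routine) obstacle to be the verification of $\iota L = L\iota$. Conceptually it reflects the fact that $\sigma$ is the restriction to $\Sigma$ of the ambient orthogonal map $\mathrm{diag}(-1,-1,1,1)$, so the induced action on $N\Sigma$ automatically commutes with $L$ and preserves $\delta^2 A$, while $\iota$ differs from this induced action only by an overall sign that is invisible to the quadratic form. One should also check that the identities, established on $\Gamma(N\Sigma)$, persist on $\Gamma(\nu)$ over the compactification $\Sigma_c$; this is immediate since both $\iota$ and $\delta^2 A$ extend continuously.
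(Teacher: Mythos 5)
Your proof is correct and takes essentially the same route as the paper's: both arguments rest on the identity $\langle\iota X,\iota Y\rangle=\sigma^*\langle X,Y\rangle$, the commutation $\iota L=L\iota$ deduced from the parities of $a(t),c(t)$ (even) and $b(t)$ (odd) in Proposition~\ref{pr:jac-jac}, and the fact that $\sigma$ is an isometry. The only difference is presentational: the paper reduces the statement to the invariance $\delta^2 A(\iota X)=\delta^2 A(X)$ with the word ``clearly,'' whereas you spell out that reduction explicitly via polarization and the $Q$-orthogonality of the $\pm 1$-eigenspaces of $\iota$.
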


\begin{proof}
Clearly, it suffices to show that $\delta^2 A(\iota X)=\delta^2 A(X)$.
It follows from~\eqref{eq:fs-iota} that
$$
\langle \iota X,\iota Y\rangle=\sigma^*\langle X,Y\rangle\quad\forall X,Y\in\Gamma(\nu).
$$
Also, it is easy to see from Proposition~\ref{pr:jac-jac} that $\iota$ commutes with $L$ (this comes from the fact that the functions $a(t)$ and $c(t)$ are even and $b(t)$ is odd). Hence, for any $X\in\Gamma(\nu)$ with compact support we have
\begin{multline*}
\delta^2 A(\iota X)=-\int_\Sigma \langle L(\iota X),\iota X \rangle\,dA=
-\int_\Sigma \langle\iota L(X),\iota X \rangle\,dA=
-\int_\Sigma \sigma^*\Big(\langle L(X),X \rangle\Big)\,dA=\\
-\int_\Sigma \langle L(X),X \rangle\,dA=\delta^2 A(X),
\end{multline*}
where we used that $\sigma$ is an isometry.
\end{proof}

\begin{proof}[Proof of Proposition~\ref{pr:fs-ub}]
Let $S$ (respectively, $S_+,S_-$) be the maximal subspace of $\Gamma(\nu)$ (respectively, $\Gamma_+(\nu),\Gamma_-(\nu)$) on which $\delta^2 A\leqslant 0$.

Let us show that $S=S_+\oplus S_-$. It is clear from Proposition~\ref{pr:fs-d2iota} that $S\supset S_+\oplus S_-$.
Now suppose that there exists $X\in S$ such that $X\notin S_+\oplus S_-$. Then $X_+\notin S_+$ or $X_-\notin S_-$.
In particular, there exists $Y\in S_+\oplus S_-$ such that $\delta^2 A(X_++Y_+)\geqslant 0$ and $\delta^2 A(X_-+Y_-)\geqslant 0$ and at least one of the inequalities is strict. Since $X+Y\in S$, we obtain
$$
0\geqslant\delta^2 A(X+Y)=\delta^2 A(X_++Y_+)+\delta^2 A(X_-+Y_-)>0
$$ 
again by Proposition~\ref{pr:fs-d2iota}. This contradiction shows that $S=S_+\oplus S_-$.

Since $S=S_+\oplus S_-$, there exists a basis
$\{X^1,\ldots,X^q\}$ of $S$, consisting of the eigenfunctions of $\iota$ (so that $\iota X^j=\pm X^j$).
Let $X^j=f_1^j n_1+f_2^j n_2$. By Proposition~\ref{pr:fs-nablan} for each $X^j$ we have
$$
(\nabla^{1,0}X_j)^\top=
f_1^j (\nabla^{1,0}n_1)^\top+f_2^j (\nabla^{1,0}n_2)^\top=
\rho(t)(-a(t)f_1^j+c(t)f_2^j\cdot i)\omega_0
$$
and for each $X^j$ and $g_\alpha$ we have
\begin{multline*}
((\nabla^{1,0}X^j)^\top,g_\alpha\omega_0)=
\int_\Sigma \rho(t)(-a(t)f_1^j+c(t)f_2^j\cdot i)\bar g_\alpha |\omega_0|^2 dA=\\
\int_\Sigma \sigma^*\Big(\rho(t)(-a(t)f_1^j+c(t)f_2^j\cdot i)\bar g_\alpha |\omega_0|^2 dA\Big)=
\int_\Sigma \sigma^*\rho(t)(-\sigma^*a(t)\sigma^*f_1^j+\sigma^*c(t)\sigma^*f_2^j\cdot i)\sigma^*\bar g_\alpha \sigma^*(|\omega_0|^2 dA)=\\
\pm\int_\Sigma \rho(t)(-a(t)f_1^j-c(t)f_2^j\cdot i)g_\alpha |\omega_0|^2 dA=
\pm\overline{((\nabla^{1,0}X^j)^\top,g_\alpha\omega_0)}.
\end{multline*}
Here we used that $\sigma$ is an isometry, that the functions $\rho(t),a(t),c(t)$ are even (and therefore $\sigma^*-$invariant), and also our choice of bases in $S$ and $\mathcal M_{\mathcal L}$.
It follows that $((\nabla^{1,0}X^j)^\top,g_\alpha\omega_0)$ is either real or pure imaginary for each $X^j$ and $g_\alpha$.
Now in the same way as in~\cite[end of the proof of Theorem~4.5]{ejiri2008comparison} we obtain
$$
\Ind(\Sigma)+\Nul(\Sigma)\leqslant -\frac{1}{2\pi}\TC(\Sigma)+3=
2k+3,
$$
and the second inequality of Proposition~\ref{pr:fs-ub} follows from Claim~\ref{cl:def-jacfields}.
\end{proof}

\subsection{The lower bound}

In this section we prove the following

\begin{proposition}\label{pr:fs-lb}
$\Ind(\widetilde{FS}_{k,l})\geqslant 2k-1$.
\end{proposition}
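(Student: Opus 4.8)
The plan is to construct an explicit $(2k-1)$-dimensional subspace of $\Gamma(N(\widetilde{FS}_{k,l}))$ on which $\delta^2 A$ is negative definite. Since every coefficient of the Jacobi operator in Proposition~\ref{pr:jac-jac} depends only on $t$, the form $\delta^2 A$ commutes with rotations in $\theta$ and splits as an orthogonal sum over the Fourier modes $e^{in\theta}$, $n\in\Z$ (recall that $\widetilde{FS}_{k,l}$ is parametrized by $(t,\theta)\in\R\times(\R/2\pi\Z)$, so the modes are indexed by integers). First I would substitute a field of the form $X=(\phi(t)\cos n\theta)\,n_1+(\psi(t)\sin n\theta)\,n_2$, together with its rotated partner $(\phi\sin n\theta)\,n_1-(\psi\cos n\theta)\,n_2$, into $\delta^2 A$ and integrate out $\theta$. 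Using the relations~\eqref{eq:jac-e1n}--\eqref{eq:jac-e2n} one finds that, for each $n\geqslant1$, the two phases each contribute a positive multiple of the one-dimensional quadratic form
\[
q_n(\phi,\psi)=\int_\R\Big(\phi'^2+\psi'^2+(n^2+b^2-2a^2)\phi^2+(n^2+b^2-2c^2)\psi^2-4nb\,\phi\psi\Big)\,dt ,
\]
so it suffices to produce, for each relevant $n$, a pair $(\phi,\psi)$ with $q_n(\phi,\psi)<0$.

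The key idea is to feed into $q_n$ a single $n$-independent profile coming from a Jacobi field. By Claim~\ref{cl:def-jacfields} the normal projection of the constant field $(0,0,1,0)$ is a bounded Jacobi field; a direct computation with~\eqref{eq:jac-basis} shows that it lies entirely in the mode $n=k$ and equals, up to a constant, the field given by $\phi_0=\cosh(lt)/\rho$ and $\psi_0=\sinh(lt)/\rho$. Hence $q_k(\phi_0,\psi_0)=0$. As a polynomial in $n$ the quadratic and linear coefficients of $q_n(\phi_0,\psi_0)$ are $I_1$ and $-4I_2$, where $I_1=\int_\R(\phi_0^2+\psi_0^2)\,dt$ and $I_2=\int_\R b\,\phi_0\psi_0\,dt$, and the constant term is then forced by $q_k(\phi_0,\psi_0)=0$, giving
\[
q_n(\phi_0,\psi_0)=(n-k)\big((n+k)I_1-4I_2\big).
\]
Both $I_1$ and $I_2$ are positive (note that $b\,\phi_0\psi_0$ is even and nonnegative for $t\geqslant0$). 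Therefore $q_n(\phi_0,\psi_0)<0$ for every $0\leqslant n\leqslant k-1$ provided $(n+k)I_1-4I_2>0$; since this expression increases with $n$, it is enough to prove the single inequality $kI_1>4I_2$, that is,
\[
k\int_\R\frac{\cosh(2lt)}{\rho(t)^2}\,dt>2\int_\R\frac{b(t)\sinh(2lt)}{\rho(t)^2}\,dt .
\]

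Granting this, each $n=1,\dots,k-1$ supplies two $\delta^2 A$-negative fields (the two angular phases), while $n=0$ supplies one more, namely $(\cosh(lt)/\rho)\,n_1$, for which the required inequality $\int_\R(\phi_0'^2+(b^2-2a^2)\phi_0^2)\,dt<0$ follows from the same estimate, since $b^2-2a^2=-2k^2<0$ at $t=0$, where $\phi_0$ does not vanish. Fields attached to different values of $n$, and the two phases for a fixed $n$, are mutually orthogonal both for the $L^2(dA)$ product and for $\delta^2 A$, because $L$ preserves the $\theta$-frequency; hence $\delta^2 A$ is negative definite on their span, which has dimension $1+2(k-1)=2k-1$. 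Truncating each profile by a cut-off in $t$ and letting the cut-off tend to infinity makes the fields compactly supported without changing the signs, and the lower bound $\Ind(\widetilde{FS}_{k,l})\geqslant2k-1$ follows. I expect the main obstacle to be the inequality $kI_1>4I_2$: after inserting the explicit formulas for $b$ and $\rho$ from Section~\ref{sec:jac} it becomes a comparison of integrals of hyperbolic functions that has to hold uniformly over all coprime pairs $(k,l)$ with $k>l>0$, and the delicate regime is $k<2l$, where the naive pointwise bound fails and one must use the fast decay $\rho(t)^{-2}\sim e^{-2kt}$ to concentrate both integrals near $t=0$.
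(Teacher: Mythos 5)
Your plan is, step for step, the paper's own proof of Proposition~\ref{pr:fs-lb}: the same separation of variables, the same single test profile $(\phi_0,\psi_0)$ extracted from the translational Jacobi field~\eqref{eq:fs-jacfields} (which sits in the mode $n=k$), the same factorization $q_n(\phi_0,\psi_0)=(n-k)\bigl((n+k)I_1-4I_2\bigr)$, and the same count $1+2(k-1)=2k-1$. The genuine gap is the one you flagged yourself: the inequality $kI_1>4I_2$ is never proved, and in fact it cannot be proved, because it fails precisely in the regime $k<2l$ that worried you. Indeed, $4I_2/I_1$ is the weighted mean of $m_2(t):=4b\phi_0\psi_0/(\phi_0^2+\psi_0^2)=2b(t)\tanh 2lt$ with weight $\phi_0^2+\psi_0^2$. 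Since $b(t)=\frac{l\sinh 2kt+k\sinh 2lt}{\cosh 2kt+\cosh 2lt}\to l$ as $t\to\infty$, one has $m_2(t)\to 2l$, while the weight decays only like $e^{-2(k-l)t}$; so for $k<2l$ a substantial part of the weight sits where $m_2>k$. Numerical evaluation gives $4I_2/I_1\approx 3.19>3$ for $(k,l)=(3,2)$ and $\approx 5.15>4$ for $(4,3)$, i.e. $q_0(\phi_0,\psi_0)>0$ there: the profile $(\phi_0,\psi_0)$ simply is not a negative direction in the mode $n=0$, and no estimate can make it one. (Your subsidiary argument for $n=0$ is also not a proof even where $q_0(\phi_0,\psi_0)<0$ holds: since the $n=0$ form decouples, negativity of the sum only says that \emph{one} of the two scalar summands is negative, and the sign of $b^2-2a^2$ at the single point $t=0$ implies nothing.)

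You should also know that the paper's proof stumbles at exactly this point, so you are in good company. In the proof of Proposition~\ref{pr:fs-qneg} the nonzero root of $B(m,t)$ is simplified as $\frac{4b\hat h_1\hat h_2}{\hat h_1^2+\hat h_2^2}=b(t)\tanh 2lt$, but since $\frac{4\sinh lt\cosh lt}{\cosh^2 lt+\sinh^2 lt}=2\tanh 2lt$ the correct root is $2b(t)\tanh 2lt$; with the missing factor of $2$ restored, the asserted pointwise bound $m_2<k$ fails for large $t$ whenever $k\leqslant 2l$, and by the numerics above the integrated conclusion $Q_{0,\infty}[\hat h]<0$ fails as well when $k<2l$. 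So the obstruction you identified is real, and for $k<2l$ the low modes require different test data rather than sharper estimates. For $n=0$ such data exist: the dilation field, i.e.\ the normal projection $u^\bot$ of the position vector, is a Jacobi field lying in the even $n=0$ sector, with profile proportional to $\rho(t)c(t)$, that is to $(l\cosh lt\cosh kt-k\sinh lt\sinh kt)/\sqrt{\sinh^2 lt+\cosh^2 kt}$; this solves $-h''+(b^2-2a^2)h=0$ and vanishes at $\pm t_0$ where $\tanh lt_0\tanh kt_0=l/k$, hence it is the Dirichlet ground state with eigenvalue $0$ on $[-t_0,t_0]$, and strict domain monotonicity gives $\lambda_1^+(0,T)<0$ for every $T>t_0$. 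Whether the remaining low modes (for instance $n=1$ when $(k,l)=(4,3)$, where $n+k=5$ is below the computed value $4I_2/I_1\approx 5.15$) can be handled by a similarly explicit field is not settled either by your proposal or by the paper.
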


Take $T>0$ and consider the domain $\Omega_T:=u((-T,T)\times(\R/2\pi\Z))\subset FS_{k,l}$.
Again, let $\widetilde\Omega_T$ be the orientable two-sheeted cover of $\Omega_T$ if $k$ is even and put $\widetilde\Omega_T=\Omega_T$ if $k$ is odd.
Recall from~\S\ref{sec:def} that one can define $\Ind\widetilde\Omega_T$ as the number of negative eigenvalues of the following problem
\begin{equation}\label{eq:fs-eigen}
\begin{cases}
LX=-\lambda X&\text{ on }\widetilde\Omega_T,\\
X=0&\text{ on }\partial\widetilde\Omega_T,
\end{cases}
\end{equation}
where $X\in\Gamma(N(\widetilde{FS}_{k,l})|_{\widetilde\Omega_T})$.

\begin{proposition}\label{pr:fs-sep}
Fix $T>0$. For each $m=0,1,2,\ldots$ consider the following matrix Sturm-Liouville weighted eigenvalue problem with Dirichlet boundary conditions
\begin{equation}\label{eq:fs-sl}
\begin{cases}
-h''(t)+A(m,t)h(t)=\lambda\rho(t)^2 h(t),\\
h(-T)=h(T)=0,
\end{cases}
\end{equation}
where $h(t)=\left(\begin{smallmatrix}h_1(t)\\ h_2(t)\end{smallmatrix}\right)$ is a vector function and
\begin{equation}\label{eq:fs-pot}
A(m,t)=\begin{pmatrix}
b(t)^2+m^2-2a(t)^2 & -2mb(t)\\
-2mb(t) & b(t)^2+m^2-2c(t)^2
\end{pmatrix}.
\end{equation}
Then

1) the eigenspace of the problem~\eqref{eq:fs-eigen} with eigenvalue $\lambda$ has a basis consisting of eigensections of the form
\begin{equation}\label{eq:fs-eigensec}
\begin{bmatrix}
h_1(t)\cos m\theta\\
h_2(t)\sin m\theta
\end{bmatrix}\quad\text{and}\quad
\begin{bmatrix}
-h_1(t)\sin m\theta\\
h_2(t)\cos m\theta
\end{bmatrix},
\end{equation}
where $h(t)=\left(\begin{smallmatrix}h_1(t)\\ h_2(t)\end{smallmatrix}\right)$ is a solution of~\eqref{eq:fs-sl} for some $m=0,1,2,\ldots$;

2) one can choose the vector functions $h(t)$ from~1) in such a way that one of the functions $h_1(t),h_2(t)$ is even in $t$ and the other one is odd in $t$;

3) if $h_1(t),h_2(t)$ are chosen as in~2), $k$ is even, and $m\ne 0$, then both eigensections~\eqref{eq:fs-eigensec} descend to $N(FS_{k,l})|_{\Omega_T}$ as soon as the number $m$ and the function $h_1(t)$ are of the same parity.
Otherwise, if the number $m\ne 0$ and the function $h_1(t)$ are of different parity, none of the eigensections~\eqref{eq:fs-eigensec} descend to $N(FS_{k,l})|_{\Omega_T}$.
\end{proposition}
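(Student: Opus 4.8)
The plan is to handle the three claims in turn, exploiting throughout that every coefficient of $L$ depends on $t$ alone, while the $\theta$-dependence enters only through $\partial_\theta$ and $\partial_\theta^2$ with constant coefficients. For part 1) I would first note that, written in the coefficients $(f_1,f_2)$ as in Proposition~\ref{pr:jac-jac}, the operator $L$ is invariant under the $\theta$-shift $\theta\mapsto\theta+\phi$ (this is the geometric $SO(2)$-symmetry of $\widetilde{FS}_{k,l}$, under which the explicit frame $n_1,n_2$ is equivariant), so that Fourier-decomposing in $\theta$ block-diagonalizes the eigenvalue problem~\eqref{eq:fs-eigen}. The one subtlety is the antisymmetric first-order coupling: the $n_1$-component of $L$ contains $+2b\,\partial_\theta f_2$ and the $n_2$-component contains $-2b\,\partial_\theta f_1$, so the two normal directions cannot be separated independently. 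Instead one checks that the crossed ansatz $(f_1,f_2)=(h_1(t)\cos m\theta,\,h_2(t)\sin m\theta)$ is self-consistent, since $\partial_\theta$ interchanges $\cos m\theta$ and $\sin m\theta$: the first equation stays a pure $\cos m\theta$ equation and the second a pure $\sin m\theta$ equation. Substituting into $LX=-\lambda X$, multiplying by $\rho(t)^2$ and cancelling the common trigonometric factor then yields exactly the weighted system~\eqref{eq:fs-sl} with potential~\eqref{eq:fs-pot}; the rotated ansatz $(-h_1(t)\sin m\theta,\,h_2(t)\cos m\theta)$ produces the same system, and completeness of the Fourier basis $\{\cos m\theta,\sin m\theta\}_{m\geqslant 0}$ shows these sections span each $\lambda$-eigenspace.

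For part 2) the key inputs are the parities already recorded in the proof of Proposition~\ref{pr:fs-d2iota}: the functions $a(t),c(t)$ are even and $b(t)$ is odd, as one reads off their explicit formulas. Consequently the diagonal entries of $A(m,t)$ in~\eqref{eq:fs-pot} are even while the off-diagonal entry $-2mb(t)$ is odd, which is precisely $A(m,-t)=JA(m,t)J$ with $J=\mathrm{diag}(1,-1)$. I would then introduce the involution $(\mathcal Ph)(t):=Jh(-t)$ and verify, using $J^2=I$, the identity $A(m,t)J=JA(m,-t)$, and the evenness of $\rho(t)^2$, that $\mathcal P$ commutes with the weighted Sturm--Liouville operator $h\mapsto -h''+A(m,t)h-\lambda\rho(t)^2 h$ and preserves the Dirichlet conditions $h(\pm T)=0$. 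As a commuting involution, $\mathcal P$ splits each eigenspace of~\eqref{eq:fs-sl} into its $\pm1$-eigenspaces; the $+1$-eigenspace consists of $h$ with $h_1$ even and $h_2$ odd, and the $-1$-eigenspace of $h$ with $h_1$ odd and $h_2$ even. Choosing a $\mathcal P$-adapted basis gives the normalization asserted in 2), where one of $h_1,h_2$ is even and the other odd.

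For part 3) assume $k$ even, so that the deck transformation of $\widetilde{FS}_{k,l}\to FS_{k,l}$ is $\sigma_0(t,\theta)=(-t,\theta+\pi)$, under which $u$ is invariant. I would first determine the action of $\sigma_0$ on the normal frame: substituting into~\eqref{eq:jac-basis} and using that $k$ is even and $l$ is odd (so $\cos k(\theta+\pi)=\cos k\theta$ but $\cos l(\theta+\pi)=-\cos l\theta$, and similarly for the sines), one finds $n_1(-t,\theta+\pi)=n_1(t,\theta)$ and $n_2(-t,\theta+\pi)=-n_2(t,\theta)$. Since $\sigma_0$ acts as the identity of $\E^4$ on the common tangent and normal planes, a section $f_1 n_1+f_2 n_2$ descends to $N(FS_{k,l})|_{\Omega_T}$ iff it is $\sigma_0$-invariant, which in the frame reads $f_1(t,\theta)=f_1(-t,\theta+\pi)$ and $f_2(t,\theta)=-f_2(-t,\theta+\pi)$. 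Substituting the eigensections~\eqref{eq:fs-eigensec}, using $\cos m(\theta+\pi)=(-1)^m\cos m\theta$, $\sin m(\theta+\pi)=(-1)^m\sin m\theta$, and the parities of $h_1,h_2$ fixed in part 2), both conditions collapse to the single requirement $(-1)^m h_1(-t)=h_1(t)$, i.e. that $m$ and $h_1$ have the same parity. When they do, both eigensections are $\sigma_0$-invariant and descend; when the parities differ, neither condition can hold for $m\ne 0$, so neither descends.

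The substitutions in parts 1) and 3) are routine; the genuinely delicate point is the frame behaviour under $\sigma_0$ in part 3). The whole descent criterion hinges on the sign flip $n_2\mapsto -n_2$ — that is, on $\sigma_0$ reversing the co-orientation of the normal bundle — since an error there would shift the parity bookkeeping and alter the final conclusion. The other place demanding care is the self-consistency of the crossed ansatz in part 1), where the antisymmetric $\partial_\theta$-coupling forbids separating the two normal components naively.
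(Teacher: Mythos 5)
Your proposal is correct and takes essentially the same route as the paper: part 1) by the $\theta$-rotation symmetry of $L$ and direct substitution of the crossed ansatz, part 2) by the involution $h(t)\mapsto Jh(-t)$ (the paper uses its negative, $h(t)\mapsto\left(\begin{smallmatrix}-h_1(-t)\\ h_2(-t)\end{smallmatrix}\right)$, which gives the same eigenspace splitting) commuting with the Sturm--Liouville operator thanks to $a,c$ even and $b$ odd, and part 3) by the parities $n_1\mapsto n_1$, $n_2\mapsto -n_2$ under the deck transformation $(t,\theta)\mapsto(-t,\theta+\pi)$, exactly as in the paper's proof. One remark: you wrote the first-order coupling as $2b(t)\,\partial_\theta$, whereas Proposition~\ref{pr:jac-jac} prints $2b(t)\rho(t)\,\partial_\theta$; your version is the one consistent with the potential~\eqref{eq:fs-pot} (and with a direct computation from $\nabla^\bot_{e_1}n_\alpha=\mp\frac{b(t)}{\rho(t)}n_\beta$ and $e_1=\frac{1}{\rho}\partial_\theta$), so this discrepancy is a typo in the paper's Propositions~\ref{pr:jac-deltabot} and~\ref{pr:jac-jac} rather than a gap in your argument.
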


\begin{proof}
1) Since the operator $L$ commutes with $\partial_\theta$, we see that the eigenspace of the problem~\eqref{eq:fs-eigen} has a basis consisting of eigensections of the form~\eqref{eq:fs-eigensec} for some functions $h_1(t),h_2(t)$.
Then a direct computation involving Proposition~\ref{pr:jac-jac} shows that $h(t)$ solves~\eqref{eq:fs-sl}.

2) This follows from the observation that the matrix Sturm-Liouville differential operator $-\frac{d^2}{dt^2}+A(m,t)$, corresponding to the problem~\eqref{eq:fs-sl}, is invariant under the map
$\left(\begin{smallmatrix}h_1(t)\\ h_2(t)\end{smallmatrix}\right)\mapsto\left(\begin{smallmatrix}-h_1(-t)\\ h_2(-t)\end{smallmatrix}\right)$ and hence there exists a joint basis of vector eigenfunctions for~\eqref{eq:fs-sl}.

3) Let $k$ be even. Then it follows from~\eqref{eq:jac-basis} that the sections $n_1$ and $n_2$ are respectively even and odd w.r.t. the involution $(t,\theta)\mapsto (-t,\theta+\pi)$.
Hence, a section $\left[\begin{smallmatrix}f_1(t)\\ f_2(t)\end{smallmatrix}\right]$ descends to $N(FS_{k,l})|_{\Omega_T}$ if and only if $f_1$ and $f_2$ are respectively even and odd w.r.t. the same involution. The claim now follows.
\end{proof}

Let
$$
\lambda_1^+(m,T)\leqslant\lambda_2^+(m,T)\leqslant\ldots\leqslant\lambda_i^+(m,T)\leqslant\ldots
$$
be the eigenvalues of the problem~\eqref{eq:fs-sl} such that the first component of the corresponding vector eigenfunction is even and the second one is odd.
Consider the quadratic form
$$
Q_{m,T}[h]=\int_{-T}^T (|h'(t)|^2+\langle A(m,t)h(t),h(t)\rangle)\,dt=
\langle h'(t),h(t)\rangle\Bigr|_{-T}^T+\int_{-T}^T\langle -h''(t)+A(t,m)h(t),h(t)\rangle\,dt,
$$
and let $Q_{m,\infty}[h]=\lim\limits_{T\to\infty} Q_{m,T}[h]$.
We use the following well-known variational characterization of $\lambda_1^+(m,T)$.

\begin{claim}\label{cl:fs-var}
The eigenvalue $\lambda_1^+(m,T)$ of the problem~\eqref{eq:fs-sl} has the following characterization
$$
\lambda_1^+(m,T)=\inf_{h\in H_{0+}^1([-T,T],\R^2)\setminus\{0\}}\frac{Q_{m,T}[h]}{\int_{-T}^T \rho(t)^2|h(t)|^2 dt},
$$
where
$$
H_{0+}^1([-T,T],\R^2)=\{h\in H_0^1([-T,T],\R^2)\colon h_1\text{ is even, }h_2\text{ is odd}\}
$$
In particular, $\lambda_1^+(m,T)<0$ if and only if $Q_{m,t}[h]<0$ for some $h\in H_{0+}^1([-T,T],\R^2)$.

(Here $H_0^1([-T,T],\R^2)$ denotes the space of Sobolev vector functions on $[-T,T]$ vanishing at $t=\pm T$ in the sense of trace.)
\end{claim}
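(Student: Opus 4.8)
The plan is to read Claim~\ref{cl:fs-var} as the Rayleigh--Ritz minimum principle for a self-adjoint Sturm--Liouville operator, reduced to a symmetry-invariant subspace. Fix $m$ and $T$ and work in the Hilbert space $L^2([-T,T],\R^2)$ endowed with the weighted inner product $\langle f,g\rangle_\rho=\int_{-T}^T\rho(t)^2\langle f(t),g(t)\rangle\,dt$. I regard $Q_{m,T}$ as the quadratic form of the operator $\mathcal D_m=-\frac{d^2}{dt^2}+A(m,t)$ on $H^1_0([-T,T],\R^2)$ with Dirichlet conditions. The matrix $A(m,t)$ from~\eqref{eq:fs-pot} is symmetric and, for fixed $m,T$, continuous and bounded on $[-T,T]$; hence $Q_{m,T}$ is symmetric, bounded below and coercive, and the inclusion $H^1_0([-T,T],\R^2)\hookrightarrow L^2([-T,T],\R^2,\rho^2\,dt)$ is compact by Rellich's theorem. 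Standard spectral theory then yields a discrete spectrum accumulating only at $+\infty$, a complete $\langle\cdot,\cdot\rangle_\rho$-orthonormal eigenbasis, and the usual min--max characterization of the eigenvalues on the full space $H^1_0$; in particular the bottom eigenvalue of $\mathcal D_m$ is $\inf_{h\ne 0}Q_{m,T}[h]/\int_{-T}^T\rho^2|h|^2\,dt$.

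Next I would carry out the symmetry reduction. Recall from part~2 of Proposition~\ref{pr:fs-sep} the involution $\tau\colon h(t)\mapsto(-h_1(-t),h_2(-t))$, under which $\mathcal D_m$ is invariant; the space $H^1_{0+}$ is precisely one of its two eigenspaces (the one with $h_1$ even and $h_2$ odd). The key computation is that both $Q_{m,T}$ and the weight $\rho^2$ are $\tau$-invariant: this is where the parities established in the proof of Proposition~\ref{pr:fs-d2iota} enter, namely that $a(t),c(t),\rho(t)$ are even while $b(t)$ is odd, so that the diagonal entries of $A(m,t)$ are even and the off-diagonal entry $-2mb(t)$ is odd. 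A change of variables $t\mapsto -t$ then gives $Q_{m,T}[\tau h]=Q_{m,T}[h]$ and $\|\tau h\|_\rho=\|h\|_\rho$, so $\tau$ is a $\langle\cdot,\cdot\rangle_\rho$-isometry commuting with $\mathcal D_m$. Consequently $\mathcal D_m$ preserves $H^1_{0+}$, its restriction there is again self-adjoint with discrete spectrum, and the eigenvalues of this restriction are by definition exactly the $\lambda_i^+(m,T)$.

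Applying the min--max principle to the restriction $\mathcal D_m|_{H^1_{0+}}$ yields $\lambda_1^+(m,T)=\inf_{h\in H^1_{0+}\setminus\{0\}}Q_{m,T}[h]/\int_{-T}^T\rho^2|h|^2\,dt$, which is the assertion. Alternatively one can argue by the direct method: a minimizing sequence for this quotient is bounded in $H^1_0$ by coercivity, hence converges weakly in $H^1_0$ and strongly in $L^2_\rho$; since $H^1_{0+}$ is weakly closed, the limit is an admissible minimizer, and being a weak solution of the Euler--Lagrange equation it is a genuine eigenfunction by one-dimensional elliptic regularity. The final ``in particular'' statement is then immediate, since the denominator $\int_{-T}^T\rho^2|h|^2\,dt$ is strictly positive, so the infimum is negative exactly when $Q_{m,T}[h]<0$ for some $h\in H^1_{0+}$.

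The only step that is not purely mechanical is the verification that the constrained minimizer over $H^1_{0+}$ is an eigenfunction of the unconstrained problem~\eqref{eq:fs-sl} rather than merely a critical point of the restricted functional. This rests entirely on the invariance of $\mathcal D_m$ under $\tau$, i.e. on the parity relations among $a,b,c,\rho$; once that invariance is in place, $H^1_{0+}$ is a reducing subspace and the remainder is textbook Sturm--Liouville theory. I therefore expect the parity bookkeeping in the change of variables to be the main, though routine, point to get right.
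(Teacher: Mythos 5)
Your proof is correct, and it coincides with what the paper does: the paper states Claim~\ref{cl:fs-var} without proof as a well-known fact, and your argument — Rayleigh--Ritz for the weighted Sturm--Liouville operator $-\frac{d^2}{dt^2}+A(m,t)$ with weight $\rho^2$, combined with reduction to the invariant subspace of the involution $h(t)\mapsto(-h_1(-t),h_2(-t))$ from part~2 of Proposition~\ref{pr:fs-sep}, justified by the parities $a,c,\rho$ even and $b$ odd — is exactly the standard reasoning being invoked there. One cosmetic remark: $H_{0+}^1$ is the $-1$ (not $+1$) eigenspace of that involution as written in the paper, but since the operator and the form are invariant, both eigenspaces reduce it and your argument is unaffected.
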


Consider the constant vector fields $\partial_3=(0,0,1,0)$ and $\partial_4=(0,0,0,1)$ in $\E^4$. We have
\begin{equation}\label{eq:fs-jacfields}
(\partial_3)^\bot=
\frac{kl}{r\rho(t)}\begin{bmatrix}
\cosh lt\cos k\theta\\
\sinh lt\sin k\theta
\end{bmatrix},\quad
(\partial_4)^\bot=
\frac{kl}{r\rho(t)}\begin{bmatrix}
\cosh lt\sin k\theta\\
-\sinh lt\cos k\theta
\end{bmatrix}.
\end{equation}
Put
$$
\hat h_1(t)=\frac{kl}{r\rho(t)}\cosh lt,\quad
\hat h_2(t)=\frac{kl}{r\rho(t)}\sinh lt,\quad
\hat h(t)=\begin{pmatrix}\hat h_1(t)\\ \hat h_2(t)\end{pmatrix}.
$$

\begin{proposition}\label{pr:fs-qneg}
We have $Q_{k,\infty}[\hat h]=0$ and $Q_{m,\infty}[\hat h]<0$ for each $m\in [0,k-1]$.
\end{proposition}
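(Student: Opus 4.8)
The plan is to recognize $\hat h$ as the radial profile of a bounded Jacobi field and then reduce everything to a single scalar integral. Comparing~\eqref{eq:fs-jacfields} with~\eqref{eq:fs-eigensec} shows that $(\partial_3)^\bot$ is precisely the eigensection $\left[\begin{smallmatrix}\hat h_1\cos k\theta\\ \hat h_2\sin k\theta\end{smallmatrix}\right]$, i.e. it lives in the mode $m=k$. Since $(\partial_3)^\bot$ is a bounded Jacobi field (Claim~\ref{cl:def-jacfields}), part~1) of Proposition~\ref{pr:fs-sep} applied with $\lambda=0$ tells us that $\hat h$ solves $-\hat h''+A(k,t)\hat h=0$ on all of $\R$. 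I would then integrate by parts to obtain $Q_{k,T}[\hat h]=\langle \hat h'(t),\hat h(t)\rangle\big|_{-T}^{T}$; since $\cosh lt$ and $\sinh lt$ grow strictly slower than $\rho(t)\sim\cosh kt$, the functions $\hat h_1,\hat h_2$ and their derivatives decay like $e^{-(k-l)|t|}$, so the boundary term vanishes as $T\to\infty$ and $Q_{k,\infty}[\hat h]=0$.

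For the other modes I would avoid solving any further ODE and instead use that $Q_{m,\infty}[\hat h]$ is an explicit quadratic polynomial in $m$: the only $m$-dependence of the potential~\eqref{eq:fs-pot} is the diagonal term $+m^2$ and the off-diagonal term $-2mb(t)$. Writing $I_1=\int_{-\infty}^{\infty}(\hat h_1^2+\hat h_2^2)\,dt$ and $I_2=\int_{-\infty}^{\infty}b\,\hat h_1\hat h_2\,dt$ and subtracting the identity $Q_{k,\infty}[\hat h]=0$ gives
$$
Q_{m,\infty}[\hat h]=(m^2-k^2)I_1-4(m-k)I_2=(m-k)\bigl[(m+k)I_1-4I_2\bigr].
$$
The key simplification is to eliminate $I_2$. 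From $\rho(t)^2=\tfrac{k^2l^2}{r^2}\cdot\tfrac12(\cosh 2kt+\cosh 2lt)$ one gets the clean formulas $\hat h_1^2+\hat h_2^2=1-\tanh(k+l)t\,\tanh(k-l)t$ and, after the same product-to-sum manipulations, $b\,\hat h_1\hat h_2=\tfrac14\bigl[(k+l)\tanh^2(k+l)t+(k-l)\tanh^2(k-l)t-2k\tanh(k+l)t\tanh(k-l)t\bigr]$. Using $\tanh^2=1-\operatorname{sech}^2$ and the elementary integrals $\int_{-\infty}^{\infty}(k\pm l)\operatorname{sech}^2((k\pm l)t)\,dt=2$, the divergent pieces cancel and one finds the remarkably simple relation $4I_2=2kI_1-4$. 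Substituting it back collapses the quadratic to
$$
Q_{m,\infty}[\hat h]=(m-k)^2 I_1+4(m-k),
$$
which recovers $Q_{k,\infty}[\hat h]=0$ and, for $m\in[0,k-1]$, equals $(k-m)\bigl[(k-m)I_1-4\bigr]$.

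Thus the whole proposition reduces to the scalar inequality $(k-m)I_1<4$ with $I_1=\int_{-\infty}^{\infty}(1-\tanh(k+l)t\,\tanh(k-l)t)\,dt$, and since the prefactor $k-m$ is largest at $m=0$ the decisive case is $kI_1<4$. This estimate is the step I expect to be the real obstacle. The crude comparison $1-\tanh(k+l)t\,\tanh(k-l)t\le\operatorname{sech}^2((k-l)t)$ only yields $I_1<2/(k-l)$, which settles the large-$(k-m)$ cases when $k>2l$ but becomes too weak precisely when $l$ is close to $k$ (for instance $k-l=1$), so there one needs the exact size of $I_1$, obtainable either in closed form (the integrand $\cosh 2lt/(\cosh(k+l)t\cosh(k-l)t)$ is $i\pi$-periodic, so residues or a partial-fraction decomposition apply) or through a sharper comparison. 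For the small values of $m$ one may alternatively exploit that $A(0,t)$ is diagonal, so the mode splits into two scalar problems and the one-component field $\left[\begin{smallmatrix}\hat h_1\\ 0\end{smallmatrix}\right]$ can be tested separately, reducing the negativity to an explicit estimate on $\int_{-\infty}^{\infty}(\hat h_1'^2+(b^2-2a^2)\hat h_1^2)\,dt$. In every case the modes with $m$ near $k$ fall out immediately from the closed form above, and the entire difficulty is concentrated in controlling $I_1$, i.e. in the small-$m$ regime.
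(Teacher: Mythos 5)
Your computation is correct as far as it goes, and it takes a genuinely different route from the paper: I verified independently the identities $\hat h_1^2+\hat h_2^2=1-\tanh((k+l)t)\tanh((k-l)t)$ and
\begin{equation*}
4b\hat h_1\hat h_2=(k+l)\tanh^2((k+l)t)+(k-l)\tanh^2((k-l)t)-2k\tanh((k+l)t)\tanh((k-l)t),
\end{equation*}
hence the relation $4I_2=2kI_1-4$ and the closed form $Q_{m,\infty}[\hat h]=(m-k)^2I_1+4(m-k)$, where $I_1=\int_{-\infty}^{+\infty}(\hat h_1^2+\hat h_2^2)\,dt$. (Your first paragraph, proving $Q_{k,\infty}[\hat h]=0$, coincides with the paper's own argument.) As a proof of Proposition~\ref{pr:fs-qneg}, however, the proposal has a genuine gap, and you name it yourself: everything reduces to $(k-m)I_1<4$, i.e.\ in the decisive case $m=0$ to $kI_1<4$, and this is nowhere established; your fallback bound $I_1<2/(k-l)$ settles it only when $k\geqslant 2l$.

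The more important point is that this gap cannot be closed, because the missing inequality is false; your exact evaluation thus shows that the proposition itself fails for some admissible pairs. Substituting $v=2kt$ and using $\cosh(\alpha v)\leqslant\cosh v$, $\cosh(\alpha v)\geqslant\tfrac12 e^{\alpha|v|}$, $\cosh v\leqslant e^{|v|}$ with $\alpha=l/k$, one gets
\begin{equation*}
kI_1=\int_{-\infty}^{+\infty}\frac{\cosh(\alpha v)}{\cosh v+\cosh(\alpha v)}\,dv
\geqslant\int_{-\infty}^{+\infty}\frac{e^{-(1-\alpha)|v|}}{4}\,dv=\frac{k}{2(k-l)},
\end{equation*}
which exceeds $4$ whenever $8l>7k$: for $(k,l)=(9,8)$ this gives $kI_1\geqslant 4.5$, hence $Q_{0,\infty}[\hat h]=k(kI_1-4)>0$, and numerically even $(k,l)=(3,2)$ fails, since there $kI_1=3\int_{-\infty}^{+\infty}(1-\tanh 5t\tanh t)\,dt\approx 4.26$. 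This also localizes an error in the paper's own proof: the second root of $B(\cdot,t)$ is claimed there to be $m_2=b(t)\tanh 2lt\in(0,k)$, but in fact $\frac{4\hat h_1\hat h_2}{\hat h_1^2+\hat h_2^2}=2\tanh 2lt$, so $m_2=2b(t)\tanh 2lt\to 2l$ as $t\to\infty$ (because $b(t)\to l$), and the key claim $m_2<k$ --- and with it the pointwise comparison $B(m,t)<B(k,t)$ --- breaks down precisely when $2l>k$, the same regime where your inequality fails. So your approach is sound and in fact sharper than the paper's (it evaluates $Q_{m,\infty}[\hat h]$ exactly instead of comparing integrands pointwise), but the step you flagged as ``the real obstacle'' is not an estimate awaiting a trick: it is false, and consequently both your proposal and the paper's argument prove the proposition only under an additional hypothesis of the type $k\geqslant 2l$. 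Note that this concerns only the specific test function $\hat h$; a negative eigenvalue $\lambda_1^+(0,T)$ might still be produced by a different test section (e.g.\ your suggested one-component field $\left[\begin{smallmatrix}\hat h_1\\ 0\end{smallmatrix}\right]$, since $A(0,t)$ is diagonal), but as it stands the proof of Proposition~\ref{pr:fs-lb}, which cites Proposition~\ref{pr:fs-qneg} for all $m\in[0,k-1]$, inherits this gap for pairs with $2l>k$.
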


\begin{proof}
It follows from Claim~\ref{cl:def-jacfields} that the normal vector fields~\eqref{eq:fs-jacfields} are Jacobi fields on $FS_{k,l}$.
This implies that $-\hat h''(t)+A(m,t)\hat h(t)=0$ and
$$
Q_{k,\infty}[\hat h]=\lim_{T\to\infty}Q_{k,T}[\hat h]=\lim_{T\to\infty}\left(\langle\hat h'(t),\hat h(t)\rangle\Bigr|_{-T}^T\right)=0,
$$
since $\hat h(t),\hat h'(t)\to 0$ as $t\to\infty$. This proves the first claim. To prove the second one we observe that
$$
Q_{m,\infty}[\hat h]=Q_{0,\infty}[\hat h]+\int_{-\infty}^{+\infty} B(m,t)dt,
$$
where
$$
B(m,t)=m^2(\hat h_1(t)^2+\hat h_2(t)^2)-4mb(t)\hat h_1(t)\hat h_2(t).
$$
Note that $B(m,t)$ is just a quadratic polynomial in $m$ with roots
$$
m_1=0,\quad\text\quad m_2=\frac{4b(t)\hat h_1(t)\hat h_2(t)}{\hat h_1(t)^2+\hat h_2(t)^2}=b(t)\tanh 2lt.
$$
It is easy to see that $0<m_2<k$ for any fixed $t\in \R$ and thus $B(m,t)<B(k,t)$ for $m\in [0,k-1]$.
This implies that
$$
Q_{m,\infty}[\hat h]<Q_{k,\infty}[\hat h]=0,
$$
as desired.
\end{proof}

\begin{proposition}\label{pr:fs-lamneg}
For each $m\in [0,k-1]$ there exists $T>0$ such that $\lambda_1^+(m,T)<0$.
\end{proposition}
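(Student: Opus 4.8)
**

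The goal is to upgrade the open-ended statement $Q_{m,\infty}[\hat h]<0$ (from Proposition~\ref{pr:fs-qneg}) to the existence of a \emph{finite} $T$ with a strictly negative first eigenvalue. The natural bridge is Claim~\ref{cl:fs-var}: it suffices to exhibit, for each fixed $m\in[0,k-1]$, a compactly supported test function $h\in H_{0+}^1([-T,T],\R^2)$ with $Q_{m,T}[h]<0$. The plan is therefore to produce such a test function by cutting off the Jacobi-type field $\hat h$.

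First I would note that $\hat h$ itself is not admissible, since it does not vanish at the endpoints. So I would introduce a family of logarithmic (or piecewise-linear) cutoff functions $\phi_T\colon\R\to[0,1]$ with $\phi_T\equiv 1$ on $[-T/2,T/2]$ (or some fixed compact interval) and $\phi_T$ supported in $[-T,T]$, and consider $h^T:=\phi_T\,\hat h$. Crucially, $\phi_T$ is even, so $h^T_1=\phi_T\hat h_1$ remains even and $h^T_2=\phi_T\hat h_2$ remains odd, keeping $h^T\in H_{0+}^1([-T,T],\R^2)$. Then I would expand
$$
Q_{m,T}[\phi_T\hat h]=\int_{-T}^{T}\bigl(\phi_T^2\,|\hat h'|^2+2\phi_T\phi_T'\langle\hat h',\hat h\rangle+(\phi_T')^2|\hat h|^2+\phi_T^2\langle A(m,t)\hat h,\hat h\rangle\bigr)\,dt.
$$
As $T\to\infty$, the terms involving $\phi_T^2$ converge to $Q_{m,\infty}[\hat h]$, which is strictly negative by Proposition~\ref{pr:fs-qneg}. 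The remaining error terms, involving $\phi_T'$, are the only obstacle.

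The hard part is controlling the error terms $\int 2\phi_T\phi_T'\langle\hat h',\hat h\rangle\,dt$ and $\int(\phi_T')^2|\hat h|^2\,dt$ and showing they vanish in the limit. This reduces to the decay of $\hat h$ and $\hat h'$ at infinity: from the formula $\hat h_1(t)=\tfrac{kl}{r\rho(t)}\cosh lt$ together with $\rho(t)\sim C e^{kt}$ as $t\to\infty$ (since $k>l$, the $\cosh^2 kt$ term dominates in $\rho$), one sees that $\hat h_1,\hat h_2\sim e^{(l-k)t}\to 0$ exponentially, and likewise $\hat h'$. Consequently, choosing $\phi_T$ with $|\phi_T'|$ supported near $t=\pm T$ and bounded (e.g. $|\phi_T'|\le C/T$ on an annular region, or a fixed-width transition near the endpoints), both error integrals are bounded by an exponentially small quantity in $T$. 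Thus for $T$ large enough the error is smaller in absolute value than $|Q_{m,\infty}[\hat h]|>0$, giving $Q_{m,T}[\phi_T\hat h]<0$.

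By Claim~\ref{cl:fs-var} this immediately yields $\lambda_1^+(m,T)<0$ for that $T$, completing the proof. The only genuinely quantitative input is the exponential decay of $\hat h$, which follows mechanically from the explicit expression for $\rho(t)$ and the assumption $k>l$; everything else is the standard cutoff argument converting an $L^2$-integrable negative-energy field on the full line into a compactly supported one.
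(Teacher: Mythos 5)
Your proof is correct and takes essentially the same approach as the paper: cut off the Jacobi-type field $\hat h$ by an even cutoff so the test function stays in $H_{0+}^1([-T,T],\R^2)$, show $Q_{m,T}[\phi_T\hat h]\to Q_{m,\infty}[\hat h]<0$ as $T\to\infty$ using Proposition~\ref{pr:fs-qneg}, and conclude via Claim~\ref{cl:fs-var}. The only cosmetic difference is that the paper dispatches the cutoff error terms by citing the $L^2$-integrability of $\hat h$, $\hat h'$ and $A(m,t)\hat h$, whereas you justify the same point via the explicit exponential decay $\hat h\sim e^{(l-k)|t|}$.
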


\begin{proof}
Fix some $m\in [0,k-1]$. Let $\eta\colon\R\to [0,1]$ be a smooth even function such that
$$
\eta|_{[-1,1]}\equiv 1,\quad
\eta|_{\R\setminus (-2,2)}\equiv 0,
$$
and put $\eta_T(t)=\eta\left(\frac{2t}{T}\right)$. Then we have
$$
Q_{m,T}[\eta_T\hat h]=Q_{m,T/2}[\hat h]+2R(T),\quad\text{where}\quad
R(T)=\int_{T/2}^T (|(\eta_T(t)\hat h(t))'|^2+\eta_T^2(t)\langle A(m,t)\hat h(t),\hat h(t)\rangle)\,dt.
$$
Since $\hat h,\hat h',A(m,t)\hat h\in L^2(\R,\R^2)$, it is clear that $R(T)\to 0$ as $T\to\infty$. Hence,
$$
Q_{m,T}[\eta_T\hat h]\to Q_{m,\infty}[\hat h]\quad\text{as}\quad T\to\infty.
$$
By Proposition~\ref{pr:fs-qneg} we have $Q_{m,\infty}[\hat h]<0$.
In particular, $Q_{m,T}[\eta_T\hat h]<0$ for any sufficiently large $T$.
Since $\eta_T\hat h\in H_{0+}^1([-T,T],\R^2)$, we obtain
$\lambda_1^+(m,T)<0$ by Claim~\ref{cl:fs-var}.
\end{proof}

\begin{proof}[Proof of Proposition~\ref{pr:fs-lb}]
By Proposition~\ref{pr:fs-lamneg} there exists $T>0$ such that $\lambda_1^+(m,T)<0$ for each $m\in [0,k-1]$.
If $m>0$, then each vector eigenfunction of~\eqref{eq:fs-sl} with eigenvalue $\lambda_1^+(m,T)$ gives two eigensections~\eqref{eq:fs-eigensec} and thus contributes~2 to $\Ind\widetilde\Omega_T$.
If $m=0$, then it may happen that one of the eigensections~\eqref{eq:fs-eigensec} vanishes identically (this happens whenever either $h_1(t)\equiv 0$ or $h_2(t)\equiv 0$).
However, since $h(t)\not\equiv 0$ both eigensections~\eqref{eq:fs-eigensec} cannot vanish identically simultaneously.
Thus the eigenvalue $\lambda_1^+(0,T)$ contributes at least~1 to $\Ind\widetilde\Omega_T$ and we obtain $\Ind(\widetilde{FS}_{k,l})\geqslant\Ind\widetilde\Omega_T\geqslant 2k-1$.
\end{proof}

\begin{proof}[Proof of Theorem~\ref{th:fs-ind}]
For odd $k$ the result follows immediately from Propositions~\ref{pr:fs-ub},~\ref{pr:fs-lb}, and Claim~\ref{cl:def-jacfields} since $\widetilde{FS}_{k,l}=FS_{k,l}$ in this case.
Let $k$ be even. Then still $\Ind(\widetilde{FS}_{k,l})=2k-1$ and $\Nul(\widetilde{FS}_{k,l})=4$.
This means that for any sufficiently large $T$ we have $\lambda_1^+(m,T)<0$ for $m\in [0,k-1]$, and these are all negative eigenvalues of~\eqref{eq:fs-sl}. In particular,
\begin{equation}\label{eq:fs-lambda12}
\lambda_1(0,T)<0\leqslant\lambda_2(0,T).
\end{equation}
By part~3) of Proposition~\ref{pr:fs-sep} the eigenvalue $\lambda_1^+(m,T)<0$ contributes~2 to $\Ind(\Omega_T)$ for each even $m\in [2,k-2]$ and does not contribute to $\Ind(\Omega_T)$ for odd $m$.

Now consider $m=0$. It follows from~\eqref{eq:fs-lambda12} that the eigenvalue $\lambda_1(0,T)$ has multiplicity~1.
This implies that $h_1(t)\equiv 0$ or $h_2(t)\equiv 0$ for the corresponding vector eigenfunction $h(t)$ (otherwise $\left(\begin{smallmatrix}h_1(t)\\ 0\end{smallmatrix}\right)$ and $\left(\begin{smallmatrix}0\\ h_2(t)\end{smallmatrix}\right)$ would be two linearly independent vector eigenfunctions).
In particular, for $m=0$ exactly one of the eigensections~\eqref{eq:fs-eigensec} does not vanish identically and $\lambda_1^+(0,T)$ contributes~1 to $\Ind(\Omega_T)$. We get
$$
\Ind(FS_{k,l})=\Ind(\Omega_T)=1+2\cdot\frac{k-2}{2}=k-1.
$$
Finally, observe that
$$
4\leqslant\Nul(FS_{k,l})\leqslant\Nul(\widetilde{FS}_{k,l})=4.
$$
Here the first inequality is contained in Claim~\ref{cl:def-jacfields} and the second one follows from the fact that any bounded Jacobi field on $FS_{k,l}$ can be lifted to a bounded Jacobi field on $\widetilde{FS}_{k,l}$.
Hence, $\Nul(FS_{k,l})=4$, which concludes the proof.
\end{proof}

\section{Stability of $EFS_{k,l}$}\label{sec:efs}

Throughout this section we use parametrization~\eqref{eq:intro-param2} with the domain restricted to $[T_p,+\infty)\times(\R/2\pi l\Z)$ (recall Remark~\ref{rem:jac-params} concerning the notation).

\subsection{Separation of variables}\label{sec:efs-sep}

Take $T>T_p$ and consider the domain $\Omega_T:=u((T_p,T)\times(\R/2\pi l\Z))\subset EFS_{k,l}$.
Our goal is to show that $\Ind(\Omega_T)=0$ for each $T>T_p$.
Recall from~\S\ref{sec:def} that one can define $\Ind(\Omega_T)$ as the number of negative eigenvalues of the following problem
\begin{equation}\label{eq:efs-eigen}
\begin{cases}
LX=-\lambda X&\text{ on }\Omega_T,\\
\partial_\eta X+X=0&\text{ on }\partial\Omega_T\cap\partial\B^n,\\
X=0&\text{ on }\partial\Omega_T\setminus\partial\B^n,
\end{cases}
\end{equation}
where $X\in\Gamma(N(EFS_{k,l})|_{\Omega_T})$.

\begin{proposition}\label{pr:efs-sep}
Fix $T>T_p$. For each $m=0,1,2,\ldots$ consider the following matrix Sturm-Liouville weighted eigenvalue problem with mixed Robin-Dirichlet boundary conditions
\begin{equation}\label{eq:efs-sl}
\begin{cases}
-h''(t)+A(m,t)h(t)=\lambda\rho(t)^2 h(t),\\
h'(T_p)=\rho(T_p)h(T_p),\quad
h(T)=0,
\end{cases}
\end{equation}
where $h(t)=\left(\begin{smallmatrix}h_1(t)\\ h_2(t)\end{smallmatrix}\right)$ is a vector function and $A(m,t)$ as in~\eqref{eq:fs-pot}.
Then the eigenspace of the problem~\eqref{eq:efs-eigen} with eigenvalue $\lambda$ has a basis consisting of eigensections of the form
\begin{equation}\label{eq:efs-eigenfun}
\begin{bmatrix}
h_1(t)\cos m\theta\\
h_2(t)\sin m\theta
\end{bmatrix}\quad\text{and}\quad
\begin{bmatrix}
-h_1(t)\sin m\theta\\
h_2(t)\cos m\theta
\end{bmatrix},
\end{equation}
where $h(t)=\left(\begin{smallmatrix}h_1(t)\\ h_2(t)\end{smallmatrix}\right)$ is a solution of~\eqref{eq:efs-sl} for some $m=0,1,2,\ldots$
\end{proposition}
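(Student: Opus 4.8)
The proposition asserts a separation-of-variables result for the eigenvalue problem \eqref{eq:efs-eigen} on the exterior surface $EFS_{k,l}$. This is the exact analogue of Proposition~\ref{pr:fs-sep}(1) but with the Dirichlet condition at the inner boundary replaced by a Robin condition coming from the free-boundary setup. Let me think about how the proof should go.

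The key structural facts are: (i) the Jacobi operator $L$ from Proposition~\ref{pr:jac-jac} commutes with $\partial_\theta$, since the coefficient functions $a(t),b(t),c(t),\rho(t)$ depend only on $t$; and (ii) the boundary conditions are themselves $\theta$-independent (the inner boundary $t=T_p$ carries a Robin condition with weight $\rho(T_p)$, the outer boundary $t=T$ carries Dirichlet, and both are constant in $\theta$). So the whole boundary-value problem is invariant under rotation in $\theta$, and we can decompose into Fourier modes in $\theta$.

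**Plan of proof.** The plan is to reduce the two-dimensional boundary value problem to the family of one-dimensional problems \eqref{eq:efs-sl} by Fourier analysis in $\theta$, essentially copying the argument of Proposition~\ref{pr:fs-sep}(1) and checking that the new Robin boundary condition also separates. First I would note that since $L$ commutes with $\partial_\theta$ and the domain $\Omega_T$ is a product $(T_p,T)\times(\R/2\pi l\Z)$ with $\theta$-invariant boundary conditions, the eigenspaces of \eqref{eq:efs-eigen} are invariant under the rotation action, so each eigenspace decomposes into $\theta$-Fourier modes. Writing a general section as $\left[\begin{smallmatrix} f_1 \\ f_2 \end{smallmatrix}\right]$ and expanding $f_1,f_2$ in $\cos m\theta,\sin m\theta$, the explicit form of $L$ in Proposition~\ref{pr:jac-jac} dictates that $f_1$ pairs with $\cos m\theta$ and $f_2$ with $\sin m\theta$ (or the conjugate pairing), precisely because the off-diagonal coupling term $2b(t)\rho(t)\partial_\theta$ maps $\cos m\theta \mapsto -m\sin m\theta$ and vice versa. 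This produces exactly the two section types in \eqref{eq:efs-eigenfun}.

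Next I would verify that substituting an ansatz of the form \eqref{eq:efs-eigenfun} into $LX=-\lambda X$ yields the matrix Sturm-Liouville equation $-h''+A(m,t)h=\lambda\rho(t)^2 h$ with the potential $A(m,t)$ of \eqref{eq:fs-pot}; this is the identical computation to Proposition~\ref{pr:fs-sep}(1), since the interior operator $L$ is the same, so I would simply cite that computation. The genuinely new ingredient is the boundary condition: I must check that the Robin condition $\partial_\eta X + X = 0$ on $\{t=T_p\}$ translates into $h'(T_p)=\rho(T_p)h(T_p)$. Here the outward normal $\eta$ to $\Omega_T$ at the inner boundary points in the $-\partial_t$ direction, and the unit-normal normalization introduces the conformal factor: since the metric is $\rho(t)^2(dt^2+d\theta^2)$, the unit outward normal is $-\tfrac{1}{\rho}\partial_t$, so $\partial_\eta X = -\tfrac{1}{\rho(T_p)}\partial_t X$. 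Combined with $\nabla^\bot_{e_2} n_\alpha=0$ from \eqref{eq:jac-e2n} (so that normal differentiation in the $t$-direction acts componentwise on $h$), the condition $\partial_\eta X + X=0$ becomes $-\tfrac{1}{\rho(T_p)}h'(T_p)+h(T_p)=0$, i.e. $h'(T_p)=\rho(T_p)h(T_p)$, as claimed. The Dirichlet condition at $t=T$ is immediate.

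**Main obstacle.** The routine part is the separation itself, which is verbatim Proposition~\ref{pr:fs-sep}(1). The one delicate point I would be careful about is the correct derivation of the separated boundary condition: getting the conformal factor $\rho(T_p)$ right requires tracking that the unit normal in the induced metric is $\pm\rho^{-1}\partial_t$ and that $\nabla^\bot_{e_2}$ annihilates the frame $n_1,n_2$ by \eqref{eq:jac-e2n}, so that the Robin condition acts diagonally on the components $h_1,h_2$ without mixing them through the connection. Once this is verified, the statement follows. Unlike Proposition~\ref{pr:fs-sep}, there is no parity/descent subtlety to address here, because this exterior surface $EFS_{k,l}$ is a genuine cylinder $[T_p,T]\times(\R/2\pi l\Z)$ over the full $\theta$-period and we are not quotienting by the M\"obius involution, so the separation is clean and no further cases arise.
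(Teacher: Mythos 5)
Your proposal is correct and follows essentially the same route as the paper: the separation of variables is cited verbatim from Proposition~\ref{pr:fs-sep}, part~1), and the only new point is translating the Robin condition, which the paper likewise settles by observing that $\eta=-\frac{u_t}{\rho(t)}$ along $\partial\Omega_T\cap\partial\B^4$. Your additional remark that $\nabla^\bot_{e_2}n_\alpha=0$ (equation~\eqref{eq:jac-e2n}) is what makes the Robin condition act componentwise on $h_1,h_2$ is a detail the paper leaves implicit, and it is a worthwhile clarification.
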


The proof is almost the same as the proof of Proposition~\ref{pr:fs-sep}, part~1).
One should only check additionally that the boundary condition on $\partial\Omega_T\cap\partial\B^n$ in~\eqref{eq:efs-eigen} leads to the Robin boundary condition in~\eqref{eq:efs-sl}.
This follows immediately from the fact that $\eta=-\frac{u_t}{\rho(t)}$ along the boundary.

\subsection{A scalar Sturm-Liouville problem}

It follows from Proposition~\ref{pr:efs-sep} that in order to prove the stability of $EFS_{k,l}$ it suffices to show that the eigenvalues of the problem~\eqref{eq:efs-sl} are positive for any $T>T_p$.
To this end we consider the quadratic form
$$
Q_{m,T}[h]=\int_{T_p}^T (|h(t)|^2+\langle A(m,t)h(t),h(t)\rangle)\,dt+\rho(T_p)|h(T_p)|^2,
$$
where $h(t)=\left(\begin{smallmatrix}h_1(t)\\ h_2(t)\end{smallmatrix}\right)$ is a vector function on $[T_p,T]$.
Similarly to Claim~\ref{cl:fs-var} we have

\begin{claim}\label{cl:efs-var}
The problem~\eqref{eq:efs-sl} has only positive eigenvalues if and only if the form $Q_{m,T}$ is positive definite on the space $H_{0,r}^1([T_p,T],\R^2)$ of Sobolev vector functions vanishing at $T$ in the  sense of trace.
\end{claim}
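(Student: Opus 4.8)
The plan is to recognize \eqref{eq:efs-sl} as the eigenvalue problem for the operator $\mathcal S=-\frac{d^2}{dt^2}+A(m,t)$ regarded as a self-adjoint weighted Sturm-Liouville system on $L^2([T_p,T],\rho(t)^2\,dt;\R^2)$, and to identify $Q_{m,T}$ with its associated quadratic form. The computational heart of this is a single integration by parts: for any smooth vector function $h$ vanishing at $T$,
$$
\int_{T_p}^T \langle\mathcal S h,h\rangle\,dt=
\int_{T_p}^T\bigl(|h'(t)|^2+\langle A(m,t)h(t),h(t)\rangle\bigr)\,dt-\langle h'(t),h(t)\rangle\Bigr|_{T_p}^T.
$$
The Dirichlet condition $h(T)=0$ annihilates the boundary contribution at $t=T$, and the Robin condition $h'(T_p)=\rho(T_p)h(T_p)$ converts the contribution at $t=T_p$ into precisely $+\rho(T_p)|h(T_p)|^2$. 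Hence the right-hand side is exactly $Q_{m,T}[h]$, so every eigenfunction $h$ of \eqref{eq:efs-sl} with eigenvalue $\lambda$ satisfies $Q_{m,T}[h]=\lambda\int_{T_p}^T\rho(t)^2|h(t)|^2\,dt$.

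Next I would invoke the standard spectral theory of regular Sturm-Liouville systems. On the compact interval $[T_p,T]$ the weight $\rho(t)^2$ is smooth and bounded away from zero, so \eqref{eq:efs-sl} is a regular self-adjoint weighted eigenvalue problem; it has compact resolvent, and therefore a discrete spectrum $\lambda_1\leqslant\lambda_2\leqslant\cdots\to+\infty$ together with a complete orthonormal (in the $\rho^2$-weighted inner product) basis of eigenfunctions $\{h^{(j)}\}$. The form domain of $Q_{m,T}$ is exactly $H_{0,r}^1([T_p,T],\R^2)$: the Dirichlet condition at $T$ is essential and is built into the space, whereas the Robin condition at $T_p$ is a natural boundary condition and need not be imposed on the form domain.

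Granting this, the claimed equivalence follows formally by diagonalization. Writing $\|h\|_\rho^2=\int_{T_p}^T\rho(t)^2|h(t)|^2\,dt$ and expanding an arbitrary $h\in H_{0,r}^1$ as $h=\sum_j c_j h^{(j)}$, the identity of the first step together with the $\rho^2$-orthonormality of the $h^{(j)}$ gives $Q_{m,T}[h]=\sum_j\lambda_j|c_j|^2$. If every $\lambda_j>0$ this is strictly positive for $h\ne 0$, so $Q_{m,T}$ is positive definite; conversely, if $Q_{m,T}$ is positive definite then testing it on $h^{(j)}$ yields $0<Q_{m,T}[h^{(j)}]=\lambda_j$, so every eigenvalue is positive.

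The routine parts are the integration by parts and the final diagonalization; the only step carrying genuine analytic content is the middle one, namely the verification that the weighted problem is self-adjoint with compact resolvent and that its eigenfunctions form a complete basis of the weighted $L^2$ space with form domain $H_{0,r}^1$. On a compact interval with a smooth positive weight this is entirely classical, so I expect no real obstacle. Indeed, the statement is the exact analog of Claim~\ref{cl:fs-var}, with the left-endpoint Dirichlet condition there replaced by the Robin condition here, and the argument transfers once the boundary term is recomputed as above.
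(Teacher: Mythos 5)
Your proposal is correct and matches the paper's route: the paper offers no written proof of this claim, presenting it (like Claim~\ref{cl:fs-var}) as the standard variational characterization for a regular weighted Sturm--Liouville system, which is exactly the classical argument you spell out — integration by parts converting the Robin condition into the boundary term $\rho(T_p)|h(T_p)|^2$, followed by diagonalization in the $\rho^2$-weighted eigenbasis with the Dirichlet condition at $T$ essential and the Robin condition at $T_p$ natural. You also correctly read $|h(t)|^2$ in the paper's displayed formula for $Q_{m,T}$ as a typo for $|h'(t)|^2$, consistent with the subsequent decomposition into $Q_T^{(1)},Q_T^{(2)},\widetilde Q_{m,T}$.
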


Observe that
$$
Q_{m,T}[h]=Q_T^{(1)}[h_1]+Q_T^{(2)}[h_2]+\widetilde Q_{m,T}[h],
$$
where
\begin{gather*}
Q_T^{(1)}[h_1]=\int_{T_p}^T (h_1'(t)^2-2a(t)^2 h_1(t)^2)\,dt+\rho(T_p)h_1(T_p)^2,\quad
Q_T^{(2)}[h_2]=\int_{T_p}^T (h_2'(t)^2-2c(t)^2 h_2(t)^2)\,dt+\rho(T_p)h_2(T_p)^2,\\
\widetilde Q_{m,T}[h]=\int_{T_p}^T \bigl((b(t)h_1(t)-mh_2(t))^2+(b(t)h_2(t)-mh_1(t))^2\bigr)\,dt.
\end{gather*}
Here $Q_T^{(1)}$ and $Q_T^{(2)}$ are defined on scalar functions and $\widetilde Q_{m,T}$ is defined on vector functions.
Note that $\widetilde Q_{m,T}>0$.
Also, since $a(t)>c(t)$, we obtain $Q_T^{(2)}>Q_T^{(1)}$.
It follows that if $Q_T^{(1)}$ is positive definite on $H_{0,r}^1[T_p,T]$, then $Q_{m,T}$ is positive definite on $H_{0,r}^1([T_p,T],\R^2)$.

Now we need the following result from scalar Sturm-Liouville theory.

\begin{proposition}\label{pr:efs-oscil}
Consider a scalar Sturm-Liouville problem on $[t_0,+\infty)$
\begin{equation}\label{eq:efs-gensl}
\begin{cases}
g''(t)+q(t)g(t)=0,\\
g'(t_0)=bg(t_0),
\end{cases}
\end{equation}
where $q\colon [t_0,+\infty)\to\R$ is continuous and $b>0$. Then the following conditions are equivalent
\begin{enumerate}[label=(\roman*)]
\item{for each $T>t_0$ the quadratic form
$$
Q_T[h]=\int_{t_0}^T \bigl(h'(t)^2-q(t)h(t)\bigr)\,dt+bh(t_0)^2
$$
is positive definite on $H_{0,r}^1[t_0,T]$;
}
\item{the solution of the problem~\eqref{eq:efs-gensl} does not vanish on $[t_0,+\infty)$.}
\end{enumerate}
In particular, if
$$
\begin{cases}
g''(t)+q_1(t)g(t)=0,\\
g'(t_0)=b_1 g(t_0)
\end{cases}\quad\text{and}\quad
\begin{cases}
g''(t)+q_2(t)g(t)=0,\\
g'(t_0)=b_2 g(t_0)
\end{cases}
$$
are two scalar Sturm-Liouville problems on $[t_0,+\infty)$ such that $q_1(t)>q_2(t)$ and $b_1<b_2$ and the solution of the first problem does not vanish on $[t_0,+\infty)$, then the solution of the second problem does not vanish on $[t_0,+\infty)$ as well.
\end{proposition}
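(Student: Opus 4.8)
The statement is the classical Jacobi/Sturm characterization of positive definiteness of a one-dimensional energy form in terms of the non-vanishing of solutions of the associated linear equation, adapted to a Robin condition at the left endpoint. The plan is to prove the equivalence $(i)\Leftrightarrow(ii)$ by a substitution (Picone-type) identity, and then to obtain the ``in particular'' comparison assertion as an immediate corollary of the monotonicity of $Q_T$ in the data $q$ and $b$ together with the equivalence.

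For the implication $(ii)\Rightarrow(i)$ I would argue as follows. Assume the solution $g$ of~\eqref{eq:efs-gensl} does not vanish on $[t_0,+\infty)$; replacing $g$ by $-g$ if necessary we may take $g>0$. Given any $h\in H_{0,r}^1[t_0,T]$, set $w=h/g$, which is again an $H^1$ function with $w(T)=0$ since $h(T)=0$ and $g(T)\neq 0$. Writing $h=gw$ and using $(gg')'=g'^2+gg''=g'^2-qg^2$, one checks the pointwise identity
\[
h'^2-qh^2=\frac{d}{dt}\bigl(gg'w^2\bigr)+g^2(w')^2.
\]
Integrating over $[t_0,T]$, the boundary term at $T$ vanishes because $w(T)=0$, while at $t_0$ the boundary condition $g'(t_0)=bg(t_0)$ gives $-g(t_0)g'(t_0)w(t_0)^2=-b\,h(t_0)^2$. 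Hence
\[
Q_T[h]=\int_{t_0}^T\bigl(h'^2-qh^2\bigr)\,dt+b\,h(t_0)^2=\int_{t_0}^T g^2(w')^2\,dt\geqslant 0,
\]
with equality forcing $w$ to be constant, i.e. $h=cg$, and then $h(T)=0$ forces $c=0$. Thus $Q_T$ is positive definite for every $T>t_0$.

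For the contrapositive of $(i)\Rightarrow(ii)$, suppose $g(t_1)=0$ for some $t_1>t_0$; note $g(t_0)\neq 0$ (otherwise the boundary condition and uniqueness would give $g\equiv 0$). Take $T=t_1$ and the test function $h=g$ on $[t_0,t_1]$, which lies in $H_{0,r}^1[t_0,t_1]$ since $h(t_1)=g(t_1)=0$. Integrating by parts and using $g''=-qg$ together with $g(t_1)=0$ and $g'(t_0)=bg(t_0)$ yields $\int_{t_0}^{t_1}(g'^2-qg^2)\,dt=[gg']_{t_0}^{t_1}=-b\,g(t_0)^2$, so that $Q_{t_1}[h]=0$ with $h\not\equiv 0$; hence $Q_{t_1}$ is not positive definite and $(i)$ fails. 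This establishes $(i)\Leftrightarrow(ii)$. The comparison statement then follows at once: since $q_1>q_2$ and $b_1<b_2$ give $Q_T^{(1)}[h]\leqslant Q_T^{(2)}[h]$ for all $h$, non-vanishing of the first solution yields positive definiteness of $Q_T^{(1)}$ for all $T$ by $(ii)\Rightarrow(i)$, whence $Q_T^{(2)}[h]\geqslant Q_T^{(1)}[h]>0$ for $h\neq 0$, and $(i)\Rightarrow(ii)$ gives the non-vanishing of the second solution.

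The only delicate point is the legitimacy of the substitution $w=h/g$: one must be sure that dividing by $g$ is harmless, which is exactly guaranteed by the non-vanishing hypothesis, and that $w=h/g\in H^1$ with the correct trace at $T$. I expect this, rather than the integrations by parts, to be the step requiring the most care, since it is where the hypothesis $(ii)$ enters and where the Robin boundary term is converted into the bulk energy $\int g^2(w')^2$.
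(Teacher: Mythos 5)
Your proof is correct, but it takes a genuinely different route from the paper. The paper's proof is two lines: it introduces a third condition (iii) --- that for each $T>t_0$ all eigenvalues of the mixed Robin--Dirichlet problem $g''+qg=-\lambda g$, $g'(t_0)=bg(t_0)$, $g(T)=0$ are positive --- and then invokes the variational characterization of eigenvalues for (i)$\Leftrightarrow$(iii) and the Sturm Oscillation Theorem for (ii)$\Leftrightarrow$(iii), leaving the ``in particular'' comparison statement implicit. You instead give a self-contained elementary argument: the Picone/Jacobi substitution $h=gw$ turns $Q_T[h]$ into $\int_{t_0}^T g^2(w')^2\,dt$ (with the Robin term exactly absorbed by the boundary term at $t_0$), which proves (ii)$\Rightarrow$(i) together with an identification of the kernel; and for the converse you exhibit the explicit null vector $h=g|_{[t_0,t_1]}$ when $g(t_1)=0$, after noting $g(t_0)\neq 0$ by uniqueness. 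Your derivation of the comparison assertion from monotonicity of $Q_T$ in $(q,b)$ plus the equivalence is precisely the (unstated) argument the paper relies on. What each approach buys: the paper's is shorter but requires the reader to know the oscillation theorem in the Robin--Dirichlet setting; yours avoids citing any spectral machinery, makes the role of the Robin term transparent, and actually proves slightly more (positive semidefiniteness with equality only on multiples of $g$, hence definiteness). Your attention to the legitimacy of dividing by $g$ is warranted but unproblematic: on the compact interval $[t_0,T]$ the non-vanishing $C^2$ function $g$ is bounded away from zero, so $w=h/g\in H^1$ with the correct trace at $T$, and $gg'w^2\in W^{1,1}$ so the fundamental theorem of calculus applies to the boundary term.
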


\begin{proof}
Actually, both (i) and (ii) are equivalent to
\begin{enumerate}[label=(\roman*), start=3]
\item{\textit{for each $T>t_0$ all eigenvalues of the problem 
$$
\begin{cases}
g''(t)+q(t)g(t)=-\lambda g(t),\\
g'(t_0)=bg(t_0),\quad g(T)=0
\end{cases}
$$
on $[t_0,T]$ are positive. 
}}
\end{enumerate}
Indeed, (i) $\Leftrightarrow$ (iii) by variational characterization of eigenvalues and (ii) $\Leftrightarrow$ (iii) by the Sturm Oscillation Theorem.
\end{proof}

Using Proposition~\ref{pr:efs-oscil}, we see that the positive definiteness of $Q_T^{(1)}$ on $H_{0,r}^1[T_p,T]$ is equivalent to the following

\begin{conjecture}\label{con:efs-scal}
The solution of the following Sturm-Liouville problem
\begin{equation}\label{eq:efs-scal}
\begin{cases}
g''(t)+2a(t)^2 g(t)=0,\\
g'(T_p)=\rho(T_p)g(T_p),
\end{cases}
\end{equation}
does not vanish on $[T_p,+\infty)$.
\end{conjecture}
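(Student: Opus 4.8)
The plan is to establish the non-vanishing assertion directly, using the comparison part of Proposition~\ref{pr:efs-oscil} against an explicitly solvable Euler equation. The first step is to put the potential $2a(t)^2$ into a usable form. Using the identity $\sinh^2 t+\cosh^2 pt=\cosh((p+1)t)\,\cosh((p-1)t)$ and the addition formulas for $\cosh$, the expression for $a(t)$ in Proposition~\ref{pr:jac-simons} (with $k=p$, $l=1$) simplifies to
$$
a(t)=\frac12\left(\frac{p+1}{\cosh((p+1)t)}+\frac{p-1}{\cosh((p-1)t)}\right).
$$
Writing $x=(p-1)t$ and $b=pt$ (so that $(p+1)t=2b-x$) and multiplying by $2t$, this is exactly the reason the combination $\frac{x}{\cosh x}+\frac{2b-x}{\cosh(2b-x)}$ appears.

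Next I would compare~\eqref{eq:efs-scal} with the Euler equation $g''+\frac{2/9}{(t-s)^2}g=0$, where $s=\frac{2T_\infty}{3p}$. Its indicial equation $\beta(\beta-1)+\frac29=0$ has roots $\beta=\frac13,\frac23$, which are real and positive precisely because $\frac29<\frac14$; hence $(t-s)^{1/3}$ and $(t-s)^{2/3}$ are non-vanishing on $(s,+\infty)$ and the equation is non-oscillatory there. The comparison hypothesis $\frac{2/9}{(t-s)^2}\geqslant 2a(t)^2$ is equivalent to the pointwise bound $a(t)\leqslant\frac{1}{3(t-s)}$, and multiplying the latter by $2t$ turns it into the elementary inequality
$$
\frac{x}{\cosh x}+\frac{2b-x}{\cosh(2b-x)}<\frac{2b}{3b-2T_\infty}.
$$
Granting this bound, I would apply the second part of Proposition~\ref{pr:efs-oscil} with $q_1=\frac{2/9}{(t-s)^2}>q_2=2a^2$ and $b_2=\rho(T_p)$: choosing the Robin coefficient $b_1=\frac{1}{3(T_p-s)}$ makes $(t-s)^{1/3}$ itself the solution of the majorant problem, which never vanishes, so the solution of~\eqref{eq:efs-scal} does not vanish either, as required.

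It remains to check the two structural hypotheses, both of which turn out to be automatic. On $[T_p,+\infty)$ we have $b=pt\geqslant pT_p\geqslant T_\infty$: indeed, multiplying the defining relation $p\tanh(pT_p)=\coth T_p$ by $T_p$ gives $(pT_p)\tanh(pT_p)=T_p\coth T_p\geqslant 1$, whence $pT_p\geqslant T_\infty$ by monotonicity of $x\tanh x$. In particular $s\leqslant\frac{2T_p}{3}$, so $T_p-s\geqslant\frac{T_p}{3}>0$ and $(t-s)>0$ on the whole ray. Finally, the strict inequality $b_1<b_2$ reduces to $\frac{1}{3(T_p-s)}<\rho(T_p)$; here the orthogonal meeting at $\partial\B^4$ yields the clean identity $\rho(T_p)=\coth T_p$ (at $t=T_p$ one has $u_t=(\coth T_p)\,u$ by the definition of $T_p$, and $|u_t|=\rho$), and then $\frac{1}{3(T_p-s)}\leqslant\frac1{T_p}<\coth T_p=\rho(T_p)$.

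Consequently the entire argument rests on a single inequality, the pointwise potential bound $a(t)\leqslant\frac{1}{3(t-s)}$. This is the step I expect to be the genuine obstacle: it is a delicate two-parameter estimate that weighs the two hyperbolic-secant terms of $a$ against a first-order pole, and it does not seem to reduce to a clean closed form, which is precisely where the computer-assisted verification enters. I would also note that the constant $\frac13$ is a deliberately non-optimal but convenient choice (the sharp disconjugacy threshold would correspond to $C=\frac14$, i.e. $\frac{1}{2\sqrt2}$ in place of $\frac13$), selected so that the resulting inequality is still true and, as seen above, the boundary condition holds for free.
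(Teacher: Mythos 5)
Your proposal is correct in the same conditional sense as the paper's own argument: both reduce Conjecture~\ref{con:efs-scal} to the elementary inequality~\eqref{eq:efs-fineq}, whose verification remains numerical, and both get there by Sturm comparison (the second part of Proposition~\ref{pr:efs-oscil}) against an Euler-type majorant with the non-vanishing solution $(t-s)^{1/3}$. The difference worth recording is your choice of shift. The paper rescales to $[1,+\infty)$ and uses the majorant $\frac{2}{9}\bigl(t-\frac{2}{3}\bigr)^{-2}$, i.e. the shift $\frac{2T_p}{3}$ in the original variable; with its substitution $x=(p-1)T_pt$, $b=T_\infty t$ the two hyperbolic arguments are $(p-1)T_pt$ and $(p+1)T_pt$, which do \emph{not} sum to $2b$, so the paper must first replace $(p+1)T_p$ by $2T_\infty-(p-1)T_p$ using parts (2) and (3) of Proposition~\ref{pr:efs-ptp} (the monotonicity of $a/\cosh(at)$) before~\eqref{eq:efs-fineq} applies. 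Your shift $s=\frac{2T_\infty}{3p}$ makes the substitution $x=(p-1)t$, $b=pt$ work on the nose: $(p+1)t=2b-x$ automatically, so $2ta(t)$ is literally the left-hand side of~\eqref{eq:efs-fineq} and your pointwise bound $a(t)<\frac{1}{3(t-s)}$ is exactly~\eqref{eq:efs-fineq} restricted to the rays $x=\frac{p-1}{p}b$ with $b\geqslant pT_p>T_\infty$. This eliminates the rescaling step and the monotonicity lemma entirely — of Proposition~\ref{pr:efs-ptp} you only need $pT_p>T_\infty$, which you prove the same way the paper does — and it isolates a smaller (one-parameter family of rays) subset of the region $\{x\geqslant 0,\,b\geqslant T_\infty\}$ on which the inequality is actually needed, at no extra cost: your Robin comparison $b_1=\frac{1}{3(T_p-s)}\leqslant\frac{1}{T_p}<\coth T_p=\rho(T_p)$ checks out, and your derivation of $\rho(T_p)=\coth T_p$ from the free-boundary relation $u_t=(\coth T_p)\,u$ at $t=T_p$ is cleaner than direct computation (it is the same identity the paper uses in \S\ref{sec:efs-auxscal}). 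You also correctly identify the irreducible obstacle: the potential bound is where the computer-assisted verification enters, exactly as in the paper, so your argument has the same logical status as the paper's — a rigorous reduction of the conjecture to~\eqref{eq:efs-fineq}.
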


Conjecture~\ref{con:efs-stab} follows from Conjecture~\ref{con:efs-scal} by the observations above.

\subsection{An auxiliary scalar problem}\label{sec:efs-auxscal}

The problem~\eqref{eq:efs-scal} is equivalent to the following problem on $[1,+\infty)$
\begin{equation}\label{eq:efs-nscal}
\begin{cases}
\tilde g''(t)+T_p^2\cdot 2a(T_p t)^2\tilde g(t)=0,\\
\tilde g'(1)=T_p\rho(T_p)\tilde g(1),
\end{cases}
\end{equation}
which is easily seen from the substitution $\tilde g(t)=g(T_p t)$.
In the sequel, we work with the problem~\eqref{eq:efs-nscal}.

Consider an auxiliary problem
\begin{equation}\label{eq:efs-aux}
\begin{cases}
\tilde g''(t)+\frac{2}{9}\left(t-\frac{2}{3}\right)^{-2}\tilde g(t)=0,\\
\tilde g'(1)=\tilde g(1).
\end{cases}
\end{equation}
Note that the function $\left(t-\frac{2}{3}\right)^\frac{1}{3}$ solves this problem and does not vanish on $[1,+\infty)$.

\begin{conjecture}\label{con:efs-aux}
We have $T_p a(T_p t)<\frac{1}{3}\left(t-\frac{2}{3}\right)^{-1}$.
\end{conjecture}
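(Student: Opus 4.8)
The plan is to make the coefficient $a$ completely explicit, recast the claim in a scale-invariant form, and then reduce it to the single transcendental inequality highlighted in the discussion of Conjecture~\ref{con:efs-stab}.

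First I would compute $a$ for the parametrization~\eqref{eq:intro-param2}. By Proposition~\ref{pr:jac-simons} with $k=p$, $l=1$ (see Remark~\ref{rem:jac-params}) together with $\rho(t)^2=\frac{p^2}{r_p^2}\bigl(\sinh^2 t+\cosh^2 pt\bigr)$, the prefactor $\frac{p^2}{r_p^2\rho(t)^2}$ collapses to $\bigl(\sinh^2 t+\cosh^2 pt\bigr)^{-1}$ and one obtains
\[
a(t)=\frac{p\cosh t\cosh pt-\sinh t\sinh pt}{\sinh^2 t+\cosh^2 pt}.
\]
The decisive step is to simplify this with the product-to-sum and sum-to-product identities: the denominator equals $\cosh((p-1)t)\cosh((p+1)t)$ while the numerator equals $\tfrac{p-1}{2}\cosh((p+1)t)+\tfrac{p+1}{2}\cosh((p-1)t)$, so that
\[
a(t)=\frac{p-1}{2\cosh((p-1)t)}+\frac{p+1}{2\cosh((p+1)t)}.
\]

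Next I would eliminate the shift by $2/3$. The substitution $\tau=T_p t$ turns the claim into the scale-free inequality $a(\tau)(3\tau-2T_p)<1$ for all $\tau\geqslant T_p$. Writing $\psi(u)=u/\cosh u$ and $b=p\tau$, the closed form above gives $2\tau\,a(\tau)=\psi(b-\tau)+\psi(b+\tau)$, so the target reads $\psi(b-\tau)+\psi(b+\tau)<\frac{2\tau}{3\tau-2T_p}$. To pass to the universal constant $T_\infty$ (the unique positive root of $t\tanh t=1$), I would first show $pT_p>T_\infty$: putting $c=pT_p$, the defining relation $p\tanh(pT_p)=\coth T_p$ becomes $p\tanh c=\coth(c/p)$, and since $\coth x>1/x$ for $x>0$ this forces $p\tanh c>p/c$, i.e.\ $c\tanh c>1$, whence $c>T_\infty$ because $u\mapsto u\tanh u$ is increasing and equals $1$ at $T_\infty$. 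Therefore $T_p>T_\infty/p$, so $3\tau-2T_p<3\tau-2T_\infty/p$ and, as $a>0$, it suffices to prove $a(\tau)(3\tau-2T_\infty/p)<1$. Rewritten with $x=(p-1)\tau\geqslant 0$ and $b=p\tau\geqslant pT_p>T_\infty$, this is exactly
\[
\frac{x}{\cosh x}+\frac{2b-x}{\cosh(2b-x)}<\frac{2b}{3b-2T_\infty},\qquad x\geqslant 0,\ b\geqslant T_\infty.
\]

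The hard part is this last inequality, which I expect to be the genuine obstacle. Since $\psi$ is unimodal with its maximum precisely at $u=T_\infty$ (indeed $\psi'(u)=\operatorname{sech}(u)(1-u\tanh u)$), the left-hand side is a symmetric sum of two copies of $\psi$ centred at $b$, and its maximum in $x$ migrates from the balanced point $x=b$ (for $b$ near $T_\infty$) to a neighbourhood of $x=T_\infty$ (for large $b$); thus no single elementary bound works for all $b$. The truly delicate regime is $b\to\infty$, where the left-hand side tends to $\psi(T_\infty)=\sqrt{T_\infty^2-1}\approx 0.6627$ while the right-hand side tends to $\tfrac23$, a gap below $0.004$ --- this razor-thin margin is exactly why an honest analytic estimate seems unavailable and a rigorous numerical (interval-arithmetic) verification is required. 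Concretely I would split the range of $b$ into a compact part $[T_\infty,B_0]$, on which the interior critical point solving $\psi'(x)=\psi'(2b-x)$ is located and the inequality checked by interval arithmetic, and a tail $b>B_0$, where the crude bound $\psi(b-\tau)+\psi(b+\tau)\leqslant\psi(T_\infty)+\psi(b)$ together with $\psi(b)\to 0$ and the closed-form value $\sqrt{T_\infty^2-1}<\tfrac23$ closes the estimate.
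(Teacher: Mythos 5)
Your proposal is correct, and its skeleton coincides with the paper's: the sech-decomposition $a(t)=\tfrac{p-1}{2\cosh((p-1)t)}+\tfrac{p+1}{2\cosh((p+1)t)}$, a comparison of $pT_p$ with $T_\infty$, a reduction to the universal two-variable inequality~\eqref{eq:efs-fineq} for $x\geqslant 0$, $b\geqslant T_\infty$, and then a numerical verification of that inequality --- which is precisely where the paper itself ceases to be analytic, so your treatment is at parity with the paper's computer-assisted one. The middle step, however, is genuinely different and more economical. The paper substitutes $b=T_\infty t$, keeps the right-hand side $\tfrac{2}{3t-2}$ intact, and \emph{enlarges the left-hand side} by replacing the $(p+1)$-term using the monotonicity of $a\mapsto a/\cosh(at)$ (part~(3) of Proposition~\ref{pr:efs-ptp}), which in turn requires both $pT_p>T_\infty$ and $(p-1)T_p<T_\infty$ (parts~(1) and~(2)). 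You instead substitute $b=p\tau$, so the left-hand side is exactly $\psi((p-1)\tau)+\psi((p+1)\tau)=\psi(x)+\psi(2b-x)$ with $\psi(u)=u/\cosh u$ and no replacement is needed, and you \emph{shrink the right-hand side} using only $T_p>T_\infty/p$; parts~(2) and~(3) of Proposition~\ref{pr:efs-ptp} are thus not needed at all. The two routes land on the identical inequality~\eqref{eq:efs-fineq}, merely with different admissible pairs $(x,b)$ (the paper has $x/b=(p-1)T_p/T_\infty$, you have $x/b=(p-1)/p$), both inside the verified range. For the final inequality, your compact-plus-tail scheme --- interval arithmetic for $b\in[T_\infty,B_0]$, and for large $b$ the bound $\psi(x)+\psi(2b-x)\leqslant\psi(T_\infty)+\psi(b)$ with $\psi(T_\infty)=\sqrt{T_\infty^2-1}<\tfrac{2}{3}$ --- is sound and gives an honest analytic tail estimate, whereas the paper splits instead into two ranges of $x$ and uses a Lagrange-remainder bound plus graphical checks; both remain numerical at the core, and your observation that the margin at $b\to\infty$ is about $0.004$ correctly identifies why this cannot be brushed aside by a crude estimate.
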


Let us explain how Conjecture~\ref{con:efs-scal} (and hence Conjecture~\ref{con:efs-stab}) follows from Conjecture~\ref{con:efs-aux}.
First note that $T_p\rho(T_p)>1$.
Indeed, the function $t\coth t$ increases on $(0,+\infty)$, and hence
$$
T_p\rho(T_p)=T_p\coth T_p>\lim_{t\to 0}t\coth t=1.
$$
Since the solution of the auxiliary problem~\eqref{eq:efs-aux} does not vanish on $[1,+\infty)$, Conjecture~\ref{con:efs-scal} follows from the second part of Proposition~\ref{pr:efs-oscil} applied to~\eqref{eq:efs-nscal} and~\eqref{eq:efs-aux}.

\subsection{An elementary inequality}\label{sec:efs-elem}

Further we reduce Conjecture~\ref{con:efs-aux} to elementary inequality~\eqref{eq:efs-fineq}.
First we need the following

\begin{proposition}\label{pr:efs-ptp}
Let $T_\infty$ be the unique positive solution of the equation $t\tanh t=1$. Then
\begin{enumerate}[label=(\arabic*)]
\item{$pT_p>T_\infty$;}
\item{$(p-1)T_p<T_\infty$;}
\item{the function $\frac{a}{\cosh at}$ is a decreasing function in $a$ for $a>T_\infty$ and $t\geqslant 1$.}
\end{enumerate}
\end{proposition}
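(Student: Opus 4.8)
The plan is to handle all three parts through the sign structure of the defining function $F(t)=p\tanh(pt)-\coth t$, whose unique positive zero is $T_p$. First I would record that $F(t)\to-\infty$ as $t\to 0^+$ and $F(t)\to p-1>0$ as $t\to+\infty$; combined with the uniqueness of the root, this forces $F<0$ on $(0,T_p)$ and $F>0$ on $(T_p,+\infty)$. Consequently, to prove a bound $T_p>c$ it suffices to check $F(c)<0$, and to prove $T_p<c$ it suffices to check $F(c)>0$. I would also record the two elementary monotonicity facts that underlie everything: the map $s\mapsto s\tanh s$ is increasing and equals $1$ exactly at $T_\infty$ (so $s\tanh s>1\iff s>T_\infty$), and dually $x\coth x>1$ for all $x>0$ (equivalently $\coth x>1/x$), which follows since $x\cosh x-\sinh x$ vanishes at $0$ and has derivative $x\sinh x>0$.

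For part (1) I would apply this with $c=T_\infty/p$, so that the claim $pT_p>T_\infty$ becomes $F(T_\infty/p)<0$, i.e. $p\tanh T_\infty<\coth(T_\infty/p)$. Since $T_\infty\tanh T_\infty=1$ we have $p\tanh T_\infty=p/T_\infty$, while $\coth(T_\infty/p)>p/T_\infty$ by $\coth x>1/x$ applied at $x=T_\infty/p$; the two estimates combine to give $F(T_\infty/p)<0$, as needed.

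Part (2) is the crux and relies on a small piece of hyperbolic bookkeeping. Setting $y=T_\infty/(p-1)$, the goal $(p-1)T_p<T_\infty$ becomes $T_p<y$, i.e. $F(y)>0$. The key observation is the identity $py=y+T_\infty$ (indeed $py-y=(p-1)y=T_\infty$), so that with $u=y$ and $v=y+T_\infty=py$ the inequality $p\tanh(py)>\coth y$, after clearing the positive denominators, reads $v\sinh v\sinh u>u\cosh u\cosh v$. I would then use $\sinh u\sinh v=\tfrac12(\cosh(u+v)-\cosh T_\infty)$ and $\cosh u\cosh v=\tfrac12(\cosh(u+v)+\cosh T_\infty)$ together with $v-u=T_\infty$ to collapse the left-minus-right difference into $\tfrac12\bigl(T_\infty\cosh(u+v)-(u+v)\cosh T_\infty\bigr)$. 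Writing $w=u+v=2y+T_\infty>T_\infty$, this is positive exactly when $\cosh w/w>\cosh T_\infty/T_\infty$, and the latter holds because $\psi(w)=\cosh w/w$ has derivative of the sign of $w\tanh w-1$, hence is increasing on $(T_\infty,+\infty)$ by the first monotonicity fact.

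For part (3) I would differentiate $f(a)=a/\cosh(at)$ in $a$ with $t\ge 1$ fixed, obtaining $f'(a)=\cosh(at)^{-2}\bigl(\cosh(at)-at\sinh(at)\bigr)$, so that $f'(a)<0$ is equivalent to $(at)\tanh(at)>1$. Since $a>T_\infty$ and $t\ge 1$ give $at\ge a>T_\infty$, this follows once more from $s\tanh s>1$ for $s>T_\infty$. The only genuinely delicate step is the reduction in part (2): the whole argument hinges on spotting the identity $py=y+T_\infty$ at the test point $y=T_\infty/(p-1)$ and on the product-to-sum cancellation that reduces a two-parameter inequality to the single monotonicity statement for $\cosh w/w$; the remaining computations are routine.
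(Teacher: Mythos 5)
Your proposal is correct, and for parts (1) and (2) it takes a genuinely different route from the paper; part (3) is identical (differentiate in $a$, reduce to the sign of $at\tanh(at)-1$). The paper works directly at the point $T_p$ and transfers the conclusion through the monotonicity of $s\tanh s$: for (1) it multiplies the defining equation $p\tanh(pT_p)=\coth T_p$ by $T_p$ to get $pT_p\tanh(pT_p)=T_p\coth T_p>1$; for (2) it applies the hyperbolic subtraction formula to $\tanh\bigl((p-1)T_p\bigr)=\tanh(pT_p-T_p)$ and the defining equation to obtain $(p-1)T_p\tanh\bigl((p-1)T_p\bigr)=T_p(\coth T_p-p\tanh T_p)<1$, where the last inequality rests on the assertion (stated without proof in the paper, though true) that $t(\coth t-p\tanh t)$ is decreasing on $(0,+\infty)$ for $p\geqslant 1$. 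You instead exploit the sign structure of $F(t)=p\tanh(pt)-\coth t$ — legitimate, since the paper's definition of $T_p$ already grants uniqueness of the root — and test $F$ at the explicit points $T_\infty/p$ and $T_\infty/(p-1)$; the product-to-sum collapse in part (2), reducing everything to the monotonicity of $\cosh w/w$ on $(T_\infty,+\infty)$, replaces both the subtraction formula and the paper's unproved auxiliary monotonicity claim. What your route buys is self-containedness: every inequality you use reduces to $s\tanh s$ being increasing or to $\coth x>1/x$, both of which you justify; the cost is the need to spot the identity $py=y+T_\infty$ at the test point, which the paper's direct manipulation avoids. Both arguments are complete and correct.
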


\begin{proof}
(1) The function $t\coth t$ increases on $(0,+\infty)$. Hence,
$$
pT_p\tanh pT_p=T_p\coth T_p>\lim_{t\to 0} t\coth t=1.
$$
Since the function $t\tanh t$ is increasing on $(0,+\infty)$ and $T_\infty\tanh T_\infty=1$, we obtain that $pT_p>T_\infty$.

(2) The function $t(\coth t-p\tanh t)$ decreases on $(0,+\infty)$ for any $p\geqslant 1$. Hence,
\begin{multline*}
(p-1)T_p\tanh (p-1)T_p=
(p-1)T_p\cdot\frac{\tanh pT_p-\tanh T_p}{1-\tanh pT_p\tanh T_p}=\\
(p-1)T_p\cdot\frac{\coth T_p-p\tanh T_p}{p-1}=
T_p(\coth T_p-p\tanh T_p)<
\lim_{t\to 0} t(\coth t-p\tanh t)=1,
\end{multline*}
where in the second equality we used that $p\tanh pT_p=\coth T_p$.
Since the function $t\tanh t$ is increasing on $(0,+\infty)$ and $T_\infty\tanh T_\infty=1$, we obtain that $(p-1)T_p<T_\infty$.

(3) We have
$$
\frac{d}{da}\frac{a}{\cosh at}=\frac{\cosh at-at\sinh at}{\cosh^2 at}=\frac{1-at\tanh at}{\cosh at}.
$$
For $a>T_\infty$ and $t\geqslant 1$ we have $at>T_\infty$ and $1-at\tanh at<0$, i.e. the function $\frac{a}{\cosh at}$ is a decreasing function in $a$.
\end{proof}

\smallskip

Let us return to Conjecture~\ref{con:efs-aux}. Observe that
$$
a(t)=\frac{p\cosh t\cosh pt-\sinh t\sinh pt}{\sinh^2 t+\cosh^2 pt}=
\frac{1}{2}\left(\frac{p-1}{\cosh (p-1)t}+\frac{p+1}{\cosh (p+1)t}\right).
$$
So Conjecture~\ref{con:efs-aux} is equivalent to the inequality
\begin{equation}\label{eq:efs-prefineq}
\frac{(p-1)T_p}{\cosh (p-1)T_p t}+\frac{(p+1)T_p}{\cosh (p+1)T_p t}<\frac{2}{3t-2}.
\end{equation}
By Proposition~\ref{pr:efs-ptp} we have
$$
(p+1)T_p>2T_\infty-(p-1)T_p>T_\infty\quad\text{and}\quad
\frac{(p+1)T_p}{\cosh (p+1)T_p t}<\frac{(2T_\infty-(p-1)T_p)}{\cosh (2T_\infty-(p-1)T_p)t}.
$$
Thus the inequality~\eqref{eq:efs-prefineq} follows from the inequality
$$
\frac{(p-1)T_p}{\cosh (p-1)T_p t}+\frac{(2T_\infty-(p-1)T_p)}{\cosh (2T_\infty-(p-1)T_p)t}<
\frac{2}{3t-2}.
$$
Substituting $x=(p-1)T_p t$ and $b=T_\infty t$, we arrive at the following inequality
\begin{equation}\label{eq:efs-fineq}
\frac{x}{\cosh x}+\frac{2b-x}{\cosh(2b-x)}<
\frac{2b}{3b-2T_\infty}.
\end{equation}
Conjecture~\ref{con:efs-stab} follows from~\eqref{eq:efs-fineq} for $b\geqslant T_\infty$ and $x\geqslant 0$.

\subsection{Numerical proof of the elementary inequality}

Unfortunately, we are not able to prove~\eqref{eq:efs-fineq}.
Instead, we verify it using a huge amount of numerical computations.

First note that the left hand side of~\eqref{eq:efs-fineq} is invariant under the map $x\mapsto 2b-x$.
Hence it suffices to verify~\eqref{eq:efs-fineq} for $x\in (-\infty,b]$.
Further, put $f(x)=\frac{x}{\cosh x}$, then $f'(x)=\frac{1-x\tanh x}{\cosh x}$.
Hence the function $f(x)$ increases on $[0,T_\infty]$, decreases on $[T_\infty,+\infty)$, and attains its maximum at $x=T_\infty$.
In particular, if $x<T_\infty$, then $f(x)<f(T_\infty)$ and $f(2b-x)<f(2b-T_\infty)$.
Hence it suffices to verify~\eqref{eq:efs-fineq} for $x\in [T_\infty,b]$. Consider two cases.

\textit{Case 1:} $x\in[T_\infty,b-\frac{1}{2}]$. Numerically, we have
$$
T_\infty=1.1996\ldots\quad\text{and}\quad
f(x)\leqslant\frac{T_\infty}{\cosh T_\infty}=0.662\ldots<\frac{2}{3}.
$$
Since the function $f(2b-x)$ increases on $[T_\infty,b-\frac{1}{2}]$ we have
$$
f(x)+f(2b-x)<\frac{2}{3}+f(b+\frac{1}{2})=\frac{2}{3}+\frac{b+\frac{1}{2}}{\cosh(b+\frac{1}{2})}.
$$
Thus it suffices to show that
$$
\frac{2}{3}+\frac{b+\frac{1}{2}}{\cosh(b+\frac{1}{2})}<\frac{2b}{3b-2T_\infty}\quad\text{for}\quad b\geqslant T_\infty,
$$
which after the substitution $c=b+\frac{1}{2}$ and algebraic manipulations is equivalent to
\begin{equation}\label{eq:efs-num1}
\cosh c>\frac{9}{4T_\infty}c^2-\left(\frac{9}{8T_\infty}+\frac{3}{2}\right)c\quad\text{for}\quad c\geqslant T_\infty-\frac{1}{2}.
\end{equation}
Clearly, the inequality~\eqref{eq:efs-num1} holds for any sufficiently large $c$ (say for $c^2>\frac{54}{T}$ we have $\cosh c>\frac{c^4}{24}>\frac{9}{4T_\infty}c^2$).
On a finite interval one can check this inequality numerically (see Fig~\ref{fig:efs-num1}).
\begin{figure}[h!]
\begin{minipage}{0.4\linewidth}
\center{\includegraphics[width=1\linewidth]{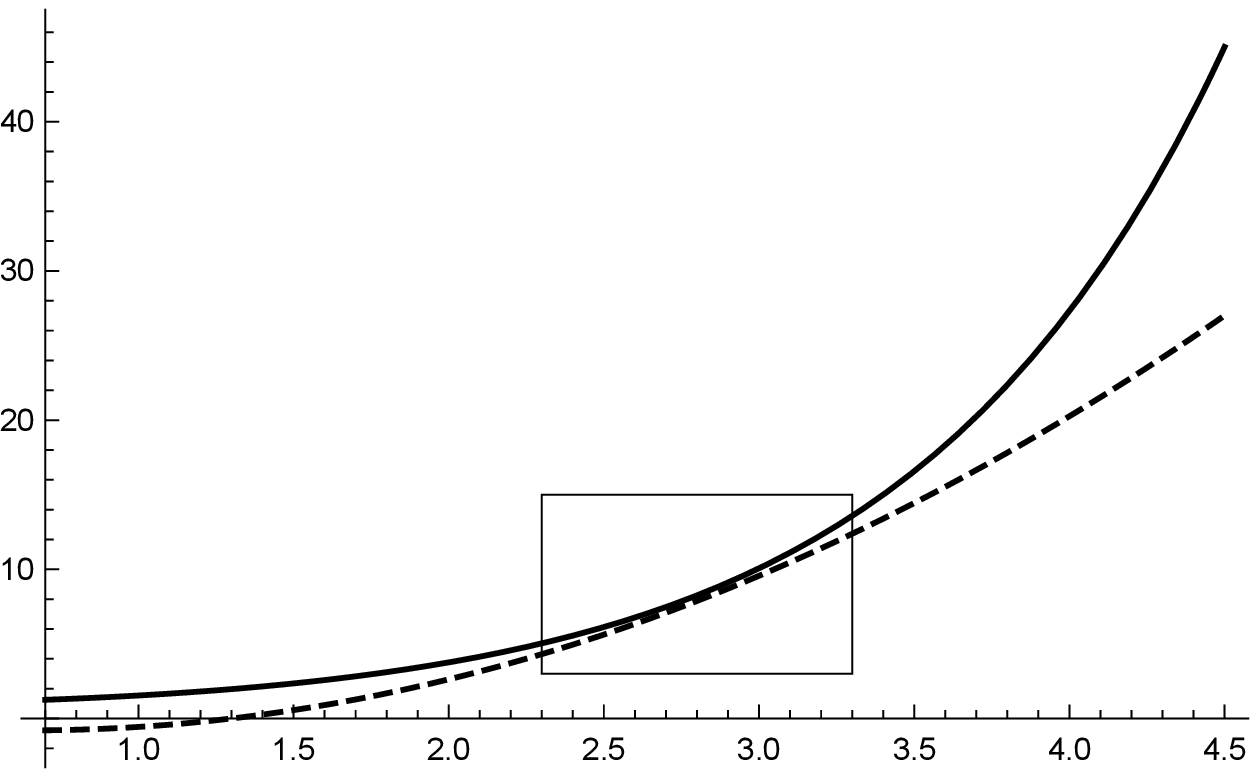}}
\end{minipage}
\hfill
\begin{minipage}{0.4\linewidth}
\center{\includegraphics[width=1\linewidth]{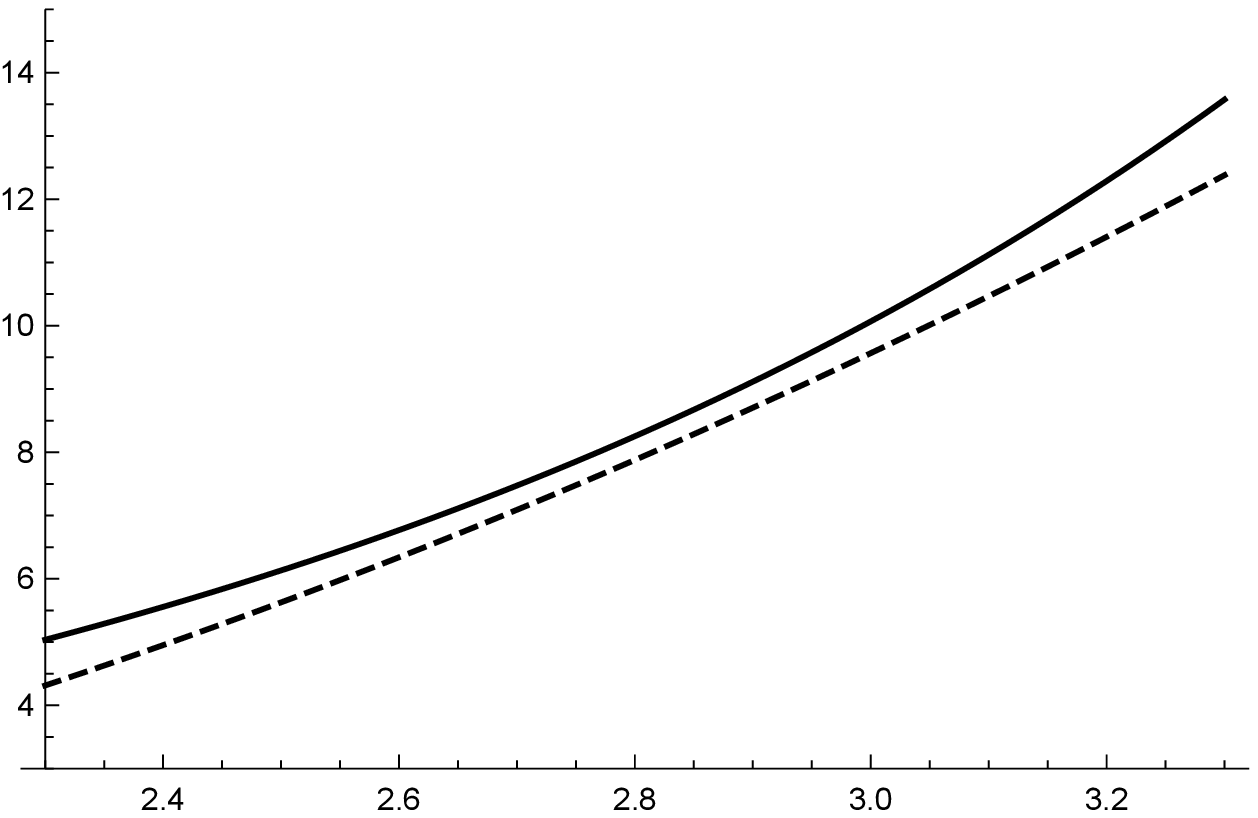}}
\end{minipage}
\caption{Left: the graphs of the functions $\cosh c$ (solid) and $\frac{9}{4T_\infty}c^2-\left(\frac{9}{8T_\infty}+\frac{3}{2}\right)c$ (dashed). Right: the part of the graph inside the box}
\label{fig:efs-num1}
\end{figure}

\textit{Case 2:} $x\in[b-\frac{1}{2},b]$. In this case we estimate the left hand side of~\eqref{eq:efs-fineq} using Lagrange's form of the remainder in Taylor's Theorem. We have (see Fig.~\ref{fig:efs-numder})
$$
f''(x)=
\frac{2x\sinh^2 x-x\cosh^2 x-2\sinh x\cosh x}{\cosh^3 x}<0.1\quad\forall x\in\R.
$$
\begin{figure}[h!]
\begin{minipage}{0.4\linewidth}
\center{\includegraphics[width=1\linewidth]{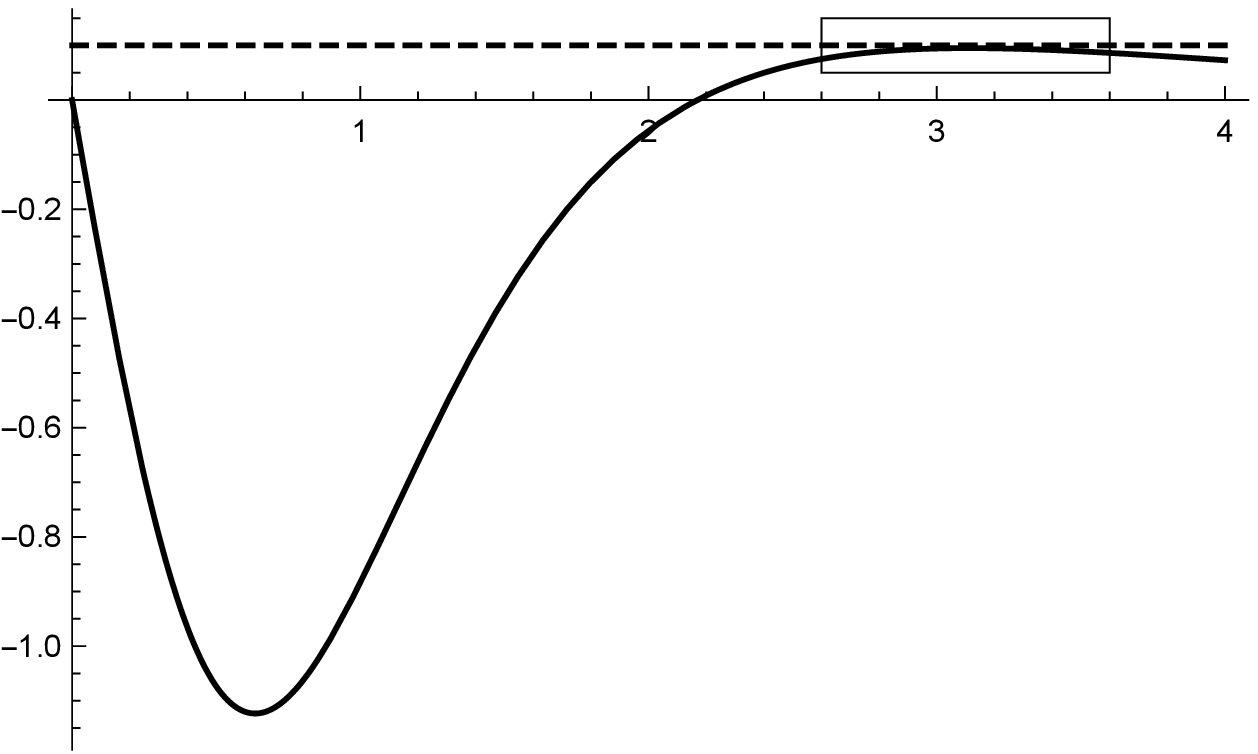}}
\end{minipage}
\hfill
\begin{minipage}{0.4\linewidth}
\center{\includegraphics[width=1\linewidth]{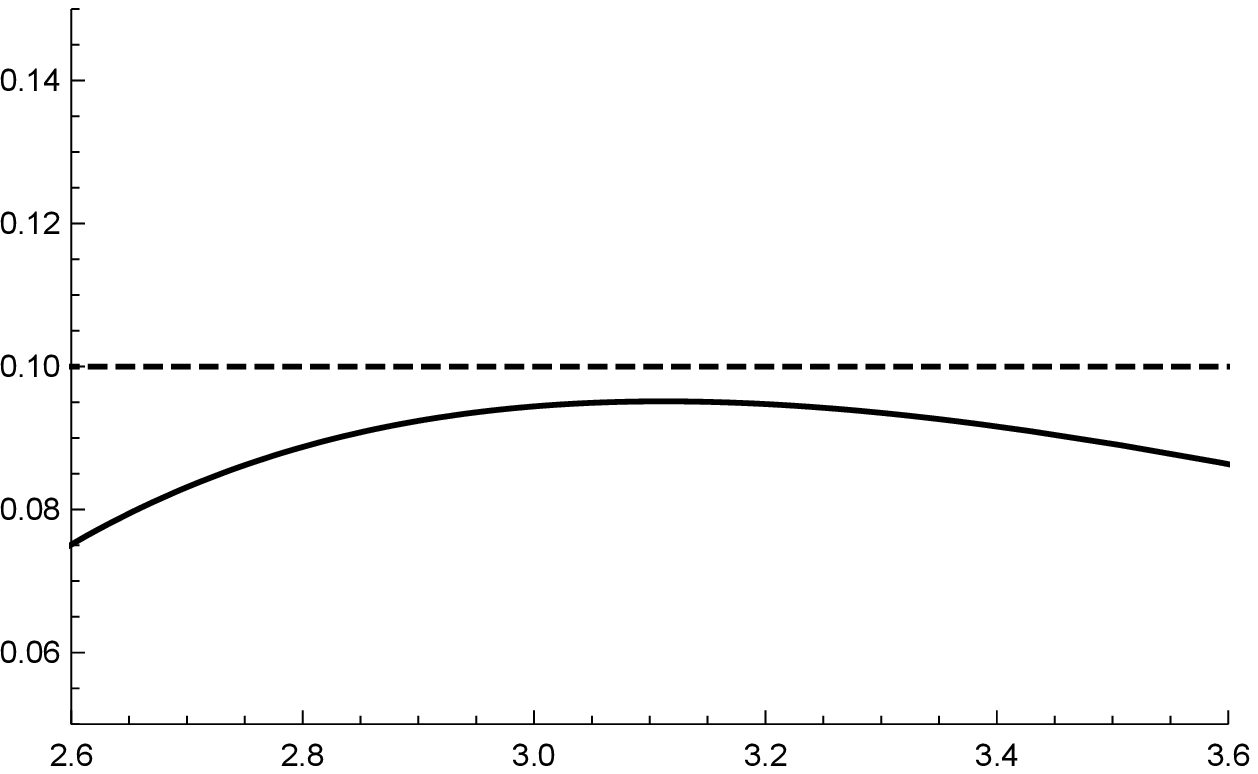}}
\end{minipage}
\caption{Left: the graph of the function $f''(x)$ (solid) and the line $y=0.1$ (dashed). Right: the part of the graph inside the box}
\label{fig:efs-numder}
\end{figure}
This can be checked numerically in a way similar to~\eqref{eq:efs-num1}. It follows that
$$
\frac{d^2}{dx^2}\bigl(f(x)+f(2b-x)\bigr)<0.2\quad\forall x\in\R.
$$
Using the Lagrange's form of the remainder, we obtain
$$
f(x)+f(2b-x)<2f(b)+\frac{0.2}{2}(x-b)^2\leqslant
\frac{2b}{\cosh b}+0.025.
$$
Thus it suffices to show that
$$
\frac{2b}{\cosh b}+0.025<\frac{2b}{3b-2T_\infty}\Leftrightarrow
\cosh b>\frac{2b(3b-2T_\infty)}{1.925b+0.05T_\infty}\quad\text{for}\quad b\geqslant T_\infty.
$$
The latter inequality can be checked numerically similarly to~\eqref{eq:efs-num1} (see Fig.~\ref{fig:efs-num2}).
\begin{figure}[h!]
\begin{minipage}{0.48\linewidth}
\center{\includegraphics[width=1\linewidth]{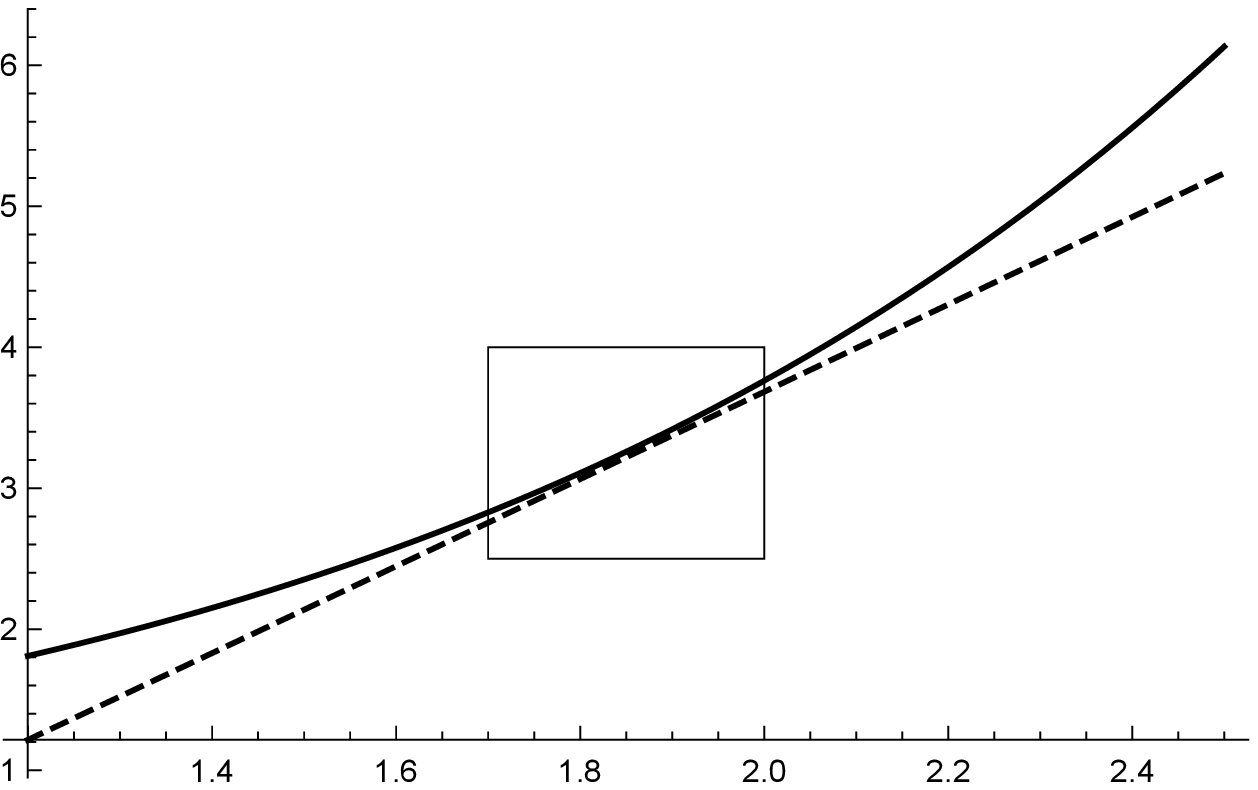}}
\end{minipage}
\hfill
\begin{minipage}{0.48\linewidth}
\center{\includegraphics[width=1\linewidth]{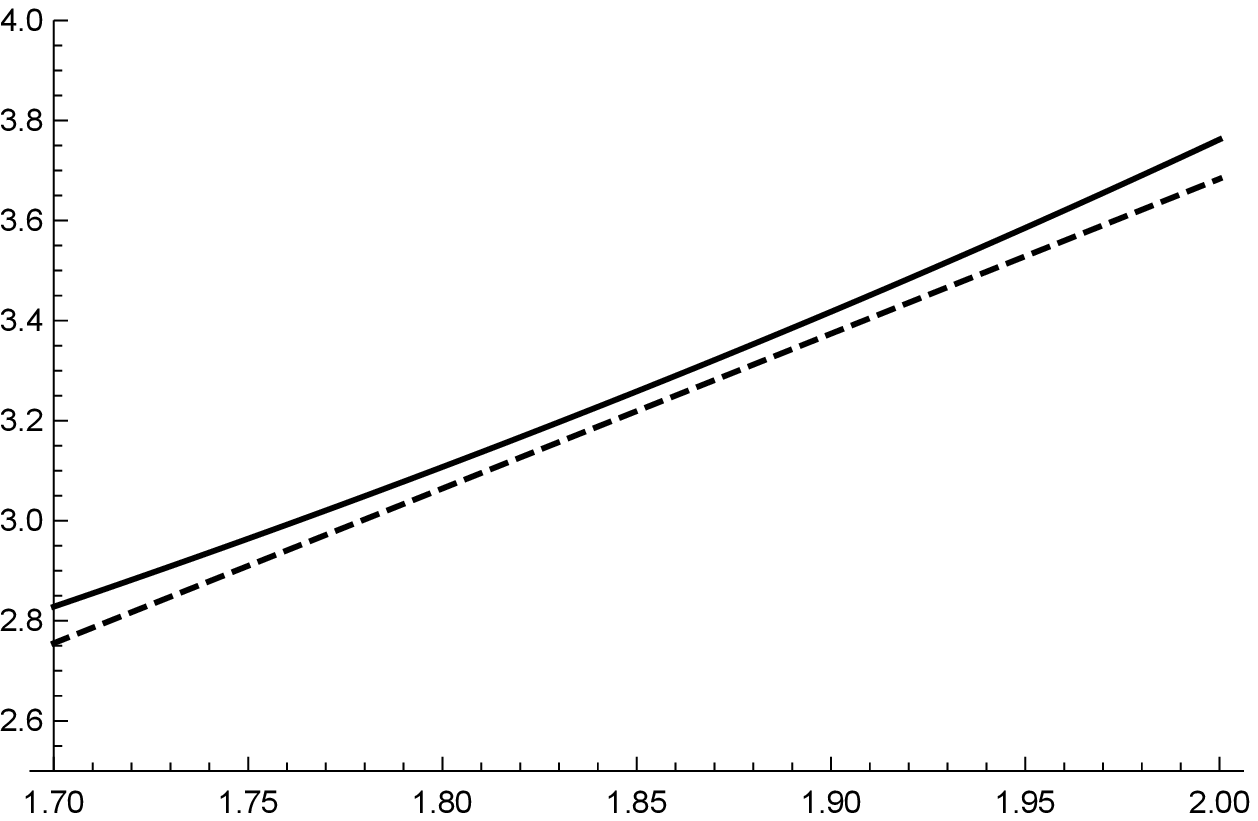}}
\end{minipage}
\caption{Left: the graphs of the functions $\cosh b$ (solid) and $\frac{2b(3b-2T_\infty)}{1.925b+0.05T_\infty}$ (dashed). Right: the part of the graph inside the box}
\label{fig:efs-num2}
\end{figure}

\section{Towards the index of $IFS_{k,l}$}\label{sec:ifs}

In this section we use parametrization~\eqref{eq:intro-param1} with the domain restricted to $[-T_{k,l},T_{k,l}]\times(\R/2\pi\Z)$.

\subsection{Rough upper and lower bounds}

It is possible to obtain a rough upper bound on the value $\Ind(IFS_{k,l})$ in the way similar to~\cite[\S2.2]{morozov2022index}.
Let us recall some definitions. Suppose that $\Sigma$ is an FBMS in $\B^n$.
Then the \emph{Steklov problem} on $\Sigma$ is the following eigenvalue problem
\begin{equation}\label{eq:ifs-steklov}
\begin{cases}
\Delta v=0&\text{ in }\Sigma,\\
\partial_\eta v=\sigma v &\text{ on }\partial\Sigma,
\end{cases}
\end{equation}
where $v\in C^\infty(\Sigma)$.
The real numbers $\sigma$ such that the Steklov problem admits non-trivial solutions are called \emph{Steklov eigenvalues} and form the \emph{Steklov spectrum}.
The corresponding solutions $v$ are called \emph{Steklov eigenfunctions}.
It is well-known that in the case of FBMS the restrictions of coordinate functions to $\Sigma$ are Steklov eigenfunctions with eigenvalue~1.
By definition, the \emph{spectral index} $\Ind_S\Sigma$ of $\Sigma$ is the number of Steklov eigenvalues of $\Sigma$ less than~1 counting with multiplicity.

\begin{proposition}[{\cite[Corollary~6.1]{medvedev2023index}}]\label{pr:ifs-indsub}
Let $\Sigma$ be an FBMS in $\B^n$. Then
$$
\Ind(\Sigma)\leqslant n\Ind_S\Sigma+\dim\mathcal M(\Sigma),
$$
where $\mathcal M(\Sigma)$ is the moduli space of conformal structures on $\Sigma$.
\end{proposition}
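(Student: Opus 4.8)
The plan is to pass from the area index to the energy index using the general two-sided bound of Lima~\cite{lima2017bounds} recalled in~\S\ref{sec:intro-ov}, namely $\Ind_E(\Sigma)\leqslant\Ind(\Sigma)\leqslant\Ind_E(\Sigma)+\dim\mathcal M(\Sigma)$. Since the right-hand inequality already contributes the $\dim\mathcal M(\Sigma)$ term, it suffices to establish the purely energetic estimate $\Ind_E(\Sigma)\leqslant n\Ind_S\Sigma$. The factor $n$ will come from decoupling the second variation of energy into its $n$ ambient coordinates, each of which is controlled by the scalar Steklov problem~\eqref{eq:ifs-steklov}. Throughout we use that $\Sigma$ is compact, so the Steklov spectrum is discrete and all indices are finite.

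First I would write an admissible vector field as $V=(V_1,\ldots,V_n)$ with $V_j\in C^\infty(\Sigma)$ and observe that, since for an FBMS in $\B^n$ one has $\delta^2 E(V)=\int_\Sigma|\nabla V|^2\,dA-\int_{\partial\Sigma}|V|^2\,dL$, the form splits as a sum over coordinates,
$$\delta^2 E(V)=\sum_{j=1}^n E_j(V_j),\qquad E_j(v)=\int_\Sigma|\nabla v|^2\,dA-\int_{\partial\Sigma}v^2\,dL.$$
The crucial point is that the index of each scalar form $E_j$ equals $\Ind_S\Sigma$. To see this I would decompose $v=v_h+v_0$, where $v_h$ is the harmonic extension of $v|_{\partial\Sigma}$ and $v_0\in H_0^1(\Sigma)$; Green's identity yields $E_j(v)=E_j(v_h)+\int_\Sigma|\nabla v_0|^2\,dA$, so $E_j$ is nonnegative on $H_0^1(\Sigma)$ and its index coincides with the index of its restriction to harmonic functions. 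On harmonic functions $E_j$ becomes the boundary quadratic form $\langle(\mathcal D-\mathrm{Id})f,f\rangle_{L^2(\partial\Sigma)}$ for the Dirichlet-to-Neumann operator $\mathcal D$, whose number of negative eigenvalues is exactly the number of Steklov eigenvalues strictly below $1$, i.e. $\Ind_S\Sigma$.

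Finally I would assemble the bound. On the full space $\Gamma(\Sigma\times\R^n)$ the form $\delta^2 E=\bigoplus_{j=1}^n E_j$ is block-diagonal, so its index is $\sum_{j=1}^n\Ind(E_j)=n\Ind_S\Sigma$. The admissible vector fields (those with $V(p)\perp u(p)$ along $\partial\Sigma$) constitute a linear subspace of $\Gamma(\Sigma\times\R^n)$, and restricting a quadratic form to a subspace cannot increase its index; hence $\Ind_E(\Sigma)\leqslant n\Ind_S\Sigma$, and combining this with Lima's inequality gives the claim. The only genuinely delicate step is the identification of the scalar index with $\Ind_S\Sigma$: one must check that the $H_0^1$ part contributes nonnegatively, so that negative-definite subspaces project injectively to the harmonic model, and that this harmonic model really reproduces the Dirichlet-to-Neumann operator, so that the count of Steklov eigenvalues below $1$ is matched exactly rather than merely bounded.
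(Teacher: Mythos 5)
Your proposal is correct and follows essentially the same route as the actual proof of this statement: the paper itself offers no independent argument (it only cites~\cite[Corollary~6.1]{medvedev2023index}), and that cited proof proceeds exactly as you do, combining Lima's bound $\Ind(\Sigma)\leqslant\Ind_E(\Sigma)+\dim\mathcal M(\Sigma)$ with the coordinatewise decoupling $\delta^2 E=\sum_j E_j$ and the identification, via harmonic extension and the Dirichlet-to-Neumann operator, of each scalar index with the number of Steklov eigenvalues below~$1$. Your handling of the delicate points (nonnegativity of the $H_0^1$ part, injectivity of the projection of negative subspaces onto the harmonic model, and passing to the subspace of admissible fields) is accurate, so the argument is complete.
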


\begin{proposition}\label{pr:ifs-indscalc}
We have
$$
\Ind_S(IFS_{k,l})=\begin{cases}
2(k+l-1),&\text{$k$ is odd,}\\
k+l-2,&\text{$k$ is even.}
\end{cases}
$$
\end{proposition}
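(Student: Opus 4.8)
The plan is to compute the Steklov spectrum of $IFS_{k,l}$ explicitly by separation of variables, exploiting the conformal structure from Proposition~\ref{pr:jac-basis}, and then to count the eigenvalues below $1$. First I would use that $z=t+i\theta$ is a conformal coordinate with conformal factor $\rho(t)^2$, so the induced metric is $\rho(t)^2(dt^2+d\theta^2)$ on the flat cylinder $[-T_{k,l},T_{k,l}]\times(\R/2\pi\Z)$; write $T=T_{k,l}$. Since the Laplace--Beltrami operator is conformally covariant in dimension two ($\rho^2\Delta=\Delta_0=\partial_t^2+\partial_\theta^2$), the harmonic functions on $IFS_{k,l}$ are exactly the flat-harmonic functions on this cylinder. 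The outward unit normal along the boundary circles $\{t=\pm T\}$ is $\eta=\pm\rho(t)^{-1}u_t$, so the Steklov condition $\partial_\eta v=\sigma v$ reads $\partial_t v=\sigma\rho(T)v$ at $t=T$ and $\partial_t v=-\sigma\rho(T)v$ at $t=-T$ (recall $\rho$ is even). The key normalization is $\rho(T)=k\tanh kT=l\coth lT$: this is precisely the defining equation of $T_{k,l}$, and it is equivalent to the statement that the coordinate functions $u_i$ (angular mode $l$ with $t$-profile $\sinh lt$ in the first two slots, angular mode $k$ with $t$-profile $\cosh kt$ in the last two) are Steklov eigenfunctions with eigenvalue $1$.

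Next I would separate variables. Writing $v=h(t)e^{im\theta}$ with $\Delta_0v=0$ forces $h$ to be a combination of $\cosh mt$ and $\sinh mt$ (and of $1,t$ when $m=0$). Splitting further into parts even and odd in $t$ decouples the two boundary conditions, and a direct substitution gives the Steklov eigenvalues
$$
\sigma_m^{\mathrm e}=\frac{m\tanh mT}{\rho(T)}\quad(\text{profile }\cosh mt),\qquad
\sigma_m^{\mathrm o}=\frac{m\coth mT}{\rho(T)}\quad(\text{profile }\sinh mt),
$$
each of multiplicity $2$ for $m\geqslant 1$ (from $\cos m\theta$ and $\sin m\theta$) and multiplicity $1$ for $m=0$ (the constant gives $\sigma=0$, the profile $t$ gives $\sigma_0^{\mathrm o}=1/(T\rho(T))$, consistent with the limits $m\to0$). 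These eigenfunctions exhaust the Steklov spectrum, since for each angular frequency the harmonic extensions form a two-dimensional space on which the Dirichlet-to-Neumann map is diagonalized by the even/odd splitting.

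To count those with $\sigma<1$ I would use that $x\mapsto x\tanh x$ and $x\mapsto x\coth x$ are increasing on $(0,+\infty)$: together with $\rho(T)=k\tanh kT=l\coth lT$ this gives $\sigma_m^{\mathrm e}<1\Leftrightarrow m<k$ and $\sigma_m^{\mathrm o}<1\Leftrightarrow m<l$. Summing multiplicities over $m\in\{0,\dots,k-1\}$ for the even profiles and over $m\in\{0,\dots,l-1\}$ for the odd ones yields $(2k-1)+(2l-1)=2(k+l-1)$ Steklov eigenvalues below $1$ on the orientable surface $\widetilde{IFS}_{k,l}$. For odd $k$ we have $IFS_{k,l}=\widetilde{IFS}_{k,l}$, which settles that case.

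For even $k$ the remaining step is to pass to the Möbius band $IFS_{k,l}$, i.e. to keep only the eigenfunctions invariant under the involution $\tau(t,\theta)=(-t,\theta+\pi)$. Since $\tau$ sends $\cosh mt\mapsto\cosh mt$, $\sinh mt\mapsto-\sinh mt$ and $(\cos m\theta,\sin m\theta)\mapsto(-1)^m(\cos m\theta,\sin m\theta)$, an even-profile eigenfunction descends iff $m$ is even and an odd-profile one descends iff $m$ is odd. Keeping the even $m$ among $\{0,\dots,k-1\}$ (namely $0,2,\dots,k-2$) contributes $1+2(k/2-1)=k-1$, and keeping the odd $m$ among $\{0,\dots,l-1\}$ (namely $1,3,\dots,l-2$, using that $l$ is odd) contributes $2\cdot\frac{l-1}{2}=l-1$; the total is $k+l-2$, as claimed. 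The main bookkeeping obstacle is exactly this last descent analysis: one must track the parities of both the $t$-profile and the angular factor, correctly treat the degenerate $m=0$ modes (where one of the two eigensections vanishes identically), and verify that the surviving multiplicities add up as stated. Everything else reduces to the monotonicity of $x\tanh x$ and $x\coth x$ and to the identity $\rho(T)=k\tanh kT=l\coth lT$.
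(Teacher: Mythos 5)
Your proposal is correct and follows essentially the same route as the paper: separation of variables for the flat Laplacian on the cylinder $[-T_{k,l},T_{k,l}]\times(\R/2\pi\Z)$, the explicit eigenvalue families $\frac{m\tanh(mT_{k,l})}{\rho(T_{k,l})}$ and $\frac{m\coth(mT_{k,l})}{\rho(T_{k,l})}$, counting via monotonicity of $x\tanh x$ and $x\coth x$ together with $\rho(T_{k,l})=k\tanh(kT_{k,l})=l\coth(lT_{k,l})$, and the same parity bookkeeping under $(t,\theta)\mapsto(-t,\theta+\pi)$ for even $k$. The only cosmetic difference is that the paper phrases the threshold as $\lambda(k)=\mu(l)=1$ and verifies $T_{k,l}\rho(T_{k,l})>1$ separately for the mode with profile $t$, which is equivalent to your normalization identity and limiting argument.
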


\begin{proof}
For $\Sigma=\widetilde{IFS}_{k,l}$ the problem~\eqref{eq:ifs-steklov} takes the form
$$
\begin{cases}
v_{tt}+v_{\theta\theta}=0&\text{ in }\Sigma,\\
v_t=\sigma\rho(T_{k,l})v,&\text{ on }\partial\Sigma.
\end{cases}
$$
It is easy to see from the standard separation of variables that this problem has the following eigenvalues and eigenfunctions
\begin{itemize}
\item{the eigenvalue $0=:\lambda(0)$ with eigenfunction $1$;}
\item{the eigenvalue $\frac{1}{T_{k,l}\rho(T_{k,l})}=:\mu(0)$ with eigenfunction $t$;}
\item{the eigenvalue $\frac{m\tanh(m T_{k,l})}{\rho(T_{k,l})}=:\lambda(m)$ with eigenfunctions $\cosh(mt)\cos(m\theta)$ and $\cosh(mt)\sin(m\theta)$ for each $m=1,2,\ldots$;}
\item{the eigenvalue $\frac{m\coth(m T_{k,l})}{\rho(T_{k,l})}=:\mu(m)$ with eigenfunctions $\sinh(mt)\cos(m\theta)$ and $\sinh(mt)\sin(m\theta)$ for each $m=1,2,\ldots$.}
\end{itemize}
Note that $T_{k,l}\rho(T_{k,l})>1$ (this is proved in the same way as the inequality $T_p\rho(T_p)>1$ in \S\ref{sec:efs-auxscal}).
Hence, $\lambda(0)=0$ and $\mu(0)<1$ both contribute~1 to $\Ind_S(\Sigma)$.
Then it is easy to check that both $\lambda(m)$ and $\mu(m)$ are increasing in $m$ and that $\lambda(k)=\mu(l)=1$.
Hence, $\lambda(m)$ contributes~2 to $\Ind_S(\widetilde{IFS}_{k,l})$ for each $m\in [1,k-1]$ and $\mu(m)$ contributes~2  to $\Ind_S(\widetilde{IFS}_{k,l})$ for each $m=[1,l-1]$.
Thus for odd $k$ we obtain
$$
\Ind_S(IFS_{k,l})=\Ind_S(\widetilde{IFS}_{k,l})=2+2(k-1)+2(l-1)=2(k+l-1).
$$
If $k$ is even then only the eigenfunctions satisfying $v(t,\theta)=v(-t,\theta+\pi)$ contribute to $\Ind_S(IFS_{k,l})$.
These are the constant function, the eigenfunctions with eigenvalue $\lambda(m)$ for each even $m\in [2,k-2]$, and the eigenfunctions with eigenvalue $\mu(m)$ for each odd $m\in [1,l-2]$. Thus,
$$
\Ind_S(IFS_{k,l})=1+2\cdot\frac{k-2}{2}+2\cdot\frac{l-1}{2}=k+l-2,
$$
which concludes the proof.
\end{proof}

\begin{proof}[Proof of Theorem~\ref{th:ifs-ub}]
Since the surface $IFS_{k,l}$ is topologically either a cylinder or a M\"obius band, we have $\dim\mathcal M(IFS_{k,l})=1$ independently of $k,l$.
The result now follows directly from Propositions~\ref{pr:ifs-indsub} and~\ref{pr:ifs-indscalc}.
\end{proof}

In the end of this section we prove Corollary~\ref{cor:ifs-lb}. As we mentioned in~\S\ref{sec:intro-ov} this corollary immedeately follows from Propostion~\ref{pr:ifs-split}.

\begin{proof}[Proof of Proposition~\ref{pr:ifs-split}]
Consider a bounded domain $\Omega\subset\Sigma$. Let $V\subset \Gamma(N\Sigma|_\Omega)$ be the maximal subspace in normal fields with support in $\Omega$ on which the second variation $\delta^2A$ of the area functional is negative definite, i.e. $\delta^2A(X)<0$ for any $X\in V$ and $\dim V=\Ind(\Omega)$. We split $\delta^2A(X)$ into two parts in the following way
\begin{gather*}
\delta^2 A(X)=\int_\Sigma (|\nabla^\bot X|^2-\langle\mathcal B(X),X\rangle)\,dA=\\
\left(\int_{I\Sigma} (|\nabla^\bot X|^2-\langle\mathcal B(X),X\rangle)\,dA-\int_{\partial I\Sigma}|X|^2\,dL\right)+\left(\int_{E\Sigma} (|\nabla^\bot X|^2-\langle\mathcal B(X),X\rangle)\,dA+\int_{\partial E\Sigma}|X|^2\,dL\right)=\\= \delta^2A_I(X|_{I\Sigma})+\delta^2A_E(X|_{E\Sigma}),
\end{gather*}
where $\delta^2A_I(X|_{I\Sigma})$ and $\delta^2A_E(X|_{E\Sigma})$ are the second variations of the area functional for $I\Sigma$ and $E\Sigma$ respectively computed on the restrictions of the field $X$ on $I\Sigma$ and $E\Sigma$. This formula is true since $\Sigma=I\Sigma\cup E\Sigma$ and $
\int_{\partial I\Sigma}|X|^2\,dL=\int_{\partial E\Sigma}|X|^2\,dL$. Let $U\subset V$ be a subspace of the vector fields whose restriction on $E\Sigma$ yields $\delta^2A_E(X|_{E\Sigma})$ negative. It is clear that $\dim U \leqslant \Ind(E\Sigma\cap \Omega)$. Since any quadratic form diagonalizes over a finite dimensional vector space, one can assume that $V=U\oplus U'$. Take any vector field $X \in U'$. Then $\delta^2A(X)<0$ and $\delta^2A_E(X|_{E\Sigma})\geqslant 0$. Thus $\delta^2A_I(X|_{I\Sigma})$ is necessarily negative. Hence,
$$
    \Ind(I\Sigma\cap \Omega)\geqslant \dim U' \geqslant \Ind(\Omega)-\Ind(E\Sigma\cap \Omega),
$$
which yields
$$
   \Ind(\Omega) \leqslant \Ind(I\Sigma\cap \Omega)+\Ind(E\Sigma\cap \Omega)\leqslant \Ind(I\Sigma)+\Ind(E\Sigma).
$$
Taking the supremum over all bounded domains $\Omega\subset \Sigma$ from both sides of this inequality completes the proof.
\end{proof}

An application of this proposition to $FS_{k,l}$ immediately implies Corollary~\ref{cor:ifs-lb} since by Theorem~\ref{th:fs-ind}
$$
\Ind(FS_{k,l})=\begin{cases}
2k-1,&\text{$k$ is odd,}\\
k-1,&\text{$k$ is even,}
\end{cases}
$$
and by Conjecture~\ref{con:efs-stab} $\Ind(EFS_{k,l})=0.$

\subsection{Numerical experiments}

As in~\S\ref{sec:fs} it is more convenient to work with $\widetilde{IFS}_{k,l}$ instead of $IFS_{k,l}$.
Recall from~\S\ref{sec:def} that one can define $\Ind(\widetilde{IFS}_{k,l})$ as the number of negative eigenvalues of the following problem
\begin{equation}\label{eq:ifs-eigen}
\begin{cases}
LX=-\lambda X&\text{ on }\widetilde{IFS}_{k,l},\\
\partial_\eta X-X=0&\text{ on }\partial\widetilde{IFS}_{k,l},
\end{cases}
\end{equation}
where $X\in\Gamma(N(\widetilde{IFS}_{k,l}))$.
The following proposition is similar to Propositions~\ref{pr:fs-sep} and~\ref{pr:efs-sep} and is proved in the same manner.

\begin{proposition}
For each $m=0,1,2,\ldots$ consider the following matrix Sturm-Liouville weighted eigenvalue problem with Robin boundary conditions
\begin{equation}\label{eq:ifs-sl}
\begin{cases}
-h''(t)+A(m,t)h(t)=\lambda\rho(t)^2 h(t),\\
h'(T_{k,l})=\rho(T_{k,l})h(T_{k,l}),\quad
h'(-T_{k,l})=-\rho(-T_{k,l})h(-T_{k,l}),
\end{cases}
\end{equation}
where $h(t)=\left(\begin{smallmatrix}h_1(t)\\ h_2(t)\end{smallmatrix}\right)$ is a vector function and $A(m,t)$ as in~\eqref{eq:fs-pot}.
Then the conclusions 1)---3) of Proposition~\ref{pr:fs-sep} hold with~\eqref{eq:ifs-eigen} and~\eqref{eq:ifs-sl} instead of~\eqref{eq:fs-eigen} and~\eqref{eq:fs-sl} respectively and $N(IFS_{k,l})$ instead of $N(FS_{k,l})|_{\Omega_T}$.
\end{proposition}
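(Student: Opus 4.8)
The plan is to follow the proof of Proposition~\ref{pr:fs-sep} almost verbatim, borrowing the boundary treatment from Proposition~\ref{pr:efs-sep}. Since the Jacobi operator $L$ commutes with $\partial_\theta$ (as established in the proof of Proposition~\ref{pr:fs-sep}, part~1)), the eigenspace of~\eqref{eq:ifs-eigen} for a fixed eigenvalue $\lambda$ admits a basis of separated sections with angular dependence $\cos m\theta,\sin m\theta$ of the form stated. Substituting such a section into $LX=-\lambda X$ and invoking the explicit form of $L$ from Proposition~\ref{pr:jac-jac} reduces the interior equation to the matrix ODE $-h''+A(m,t)h=\lambda\rho(t)^2 h$, exactly as in the two earlier propositions.

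The only genuinely new point is the translation of the boundary conditions. Since $IFS_{k,l}$ is an FBMS in $\B^4$, its two boundary circles sit at $t=\pm T_{k,l}$, and the outward unit conormal is $\eta=u_t/\rho(t)=e_2$ at $t=T_{k,l}$ and $\eta=-u_t/\rho(t)=-e_2$ at $t=-T_{k,l}$. For a separated section $X=\left[\begin{smallmatrix} f_1\\ f_2 \end{smallmatrix}\right]$, the vanishing relations $\nabla^\bot_{e_2}n_1=\nabla^\bot_{e_2}n_2=0$ from~\eqref{eq:jac-e2n} give $\nabla^\bot_{e_2}X=\frac{1}{\rho(t)}\left[\begin{smallmatrix} \partial_t f_1\\ \partial_t f_2 \end{smallmatrix}\right]$, so the normal derivative acts only on the radial part. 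Inserting this into the Robin condition $\partial_\eta X-X=0$ at each boundary component then yields precisely $h'(T_{k,l})=\rho(T_{k,l})h(T_{k,l})$ and $h'(-T_{k,l})=-\rho(-T_{k,l})h(-T_{k,l})$, matching~\eqref{eq:ifs-sl}. This establishes the analog of conclusion~1).

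Conclusions~2) and~3) transfer with only a minor additional check. For~2), the matrix operator $-d^2/dt^2+A(m,t)$ is invariant under $\left(\begin{smallmatrix} h_1(t)\\ h_2(t) \end{smallmatrix}\right)\mapsto\left(\begin{smallmatrix} -h_1(-t)\\ h_2(-t) \end{smallmatrix}\right)$ because $a(t),c(t)$ are even and $b(t)$ is odd, exactly as before; one verifies in addition that this involution carries the Robin condition at $t=T_{k,l}$ to the one at $t=-T_{k,l}$ and vice versa, using that $\rho$ is even. Hence the whole boundary value problem~\eqref{eq:ifs-sl} is invariant and admits a joint eigenbasis with one radial component even and the other odd. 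Conclusion~3) is a purely local statement about the parities of $n_1,n_2$ under $(t,\theta)\mapsto(-t,\theta+\pi)$ for even $k$, so it is identical to part~3) of Proposition~\ref{pr:fs-sep}, now with $N(IFS_{k,l})$ in place of $N(FS_{k,l})|_{\Omega_T}$.

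I do not expect a serious obstacle here: the substance is already contained in the cited propositions, and the only fresh computation—reducing the orthogonal-intersection Robin condition to its radial form—is short, relying solely on $\eta=\pm e_2$ together with~\eqref{eq:jac-e2n}. The only care required is tracking the signs at the two boundary components and confirming that the parity involution exchanges them.
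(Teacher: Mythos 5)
Your proposal is correct and takes essentially the same approach as the paper, which disposes of this proposition in one line by saying it ``is proved in the same manner'' as Propositions~\ref{pr:fs-sep} and~\ref{pr:efs-sep}. The details you supply---separation of variables from $L$ commuting with $\partial_\theta$, the reduction of the Robin condition at $t=\pm T_{k,l}$ via $\eta=\pm e_2$ together with $\nabla^\bot_{e_2}n_\alpha=0$, and the check that the parity involution exchanges the two boundary conditions---are exactly the steps the paper leaves implicit.
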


Denote the $i$-th eigenvalue of the problem~\eqref{eq:ifs-sl} by $\lambda_i(m)$ (where $i\geqslant 1$).
Then it is natural to arrange these eigenvalues in a table that contains $\lambda_i(m)$ at the intersection of the $m$-th row and the $i$-th column (see Table~\ref{tab:ifs-eigenval} for the case $k=7,l=3$).

\begin{table}[h!]
$$
\begin{array}{cccccc}
  & 1 & 2 & 3 & 4 & 5 \\
0 & -5.443 & -2.415 & -1.512 & \mathbf{0.000} & 10.780 \\
1 & -5.393 & -2.441 & -1.387 & 0.055 & 10.850 \\
2 & -5.242 & -2.473 & -1.060 & 0.220 & 11.060 \\
3 & -4.995 & -2.453 & -0.589 & 0.497 & 11.410 \\
4 & -4.654 & -2.358 & \mathbf{0.000} & 0.887 & 11.910 \\
5 & -4.226 & -2.179 & 0.694 & 1.390 & 12.550 \\
6 & -3.717 & -1.914 & 1.488 & 2.010 & 13.350 \\
7 & -3.132 & -1.562 & 2.376 & 2.744 & 14.300 \\
8 & -2.477 & -1.125 & 3.355 & 3.593 & 15.400 \\
9 & -1.757 & -0.603 & 4.419 & 4.553 & 16.670 \\
10 & -0.976 & \mathbf{0.000} & 5.567 & 5.620 & 18.110 \\
11 & -0.136 & 0.684 & 6.788 & 6.793 & 19.710 \\
12 & 0.763 & 1.447 & 8.051 & 8.095 & 21.500 \\
\end{array}
$$
\caption{some eigenvalues $\lambda_i(m)$ computed numerically for $k=7,l=3$. The bold zero eigenvalues come from the rotational Killing fields.}
\label{tab:ifs-eigenval}
\end{table}

Some features of this table can be predicted. First since $\lambda_i(m)$ is nondecreasing in $i$, each row of the table is a nondecreasing sequence.
Secondly considering the rotational Killing fields in $\E^4$ and using Claim~\ref{cl:def-jacfields} we obtain by a direct computation that the vector functions
$$
\frac{1}{\rho(t)}\begin{pmatrix}0\\ k\sinh lt\cosh kt\end{pmatrix},\,
\frac{1}{\rho(t)}\begin{pmatrix}k\sinh lt\cosh lt+l\sinh kt\cosh kt\\ -l\cosh^2 kt+k\sinh^2 lt\end{pmatrix},\,
\frac{1}{\rho(t)}\begin{pmatrix}k\sinh lt\cosh lt+l\sinh kt\cosh kt\\ l\cosh^2 kt+k\sinh^2 lt\end{pmatrix}
$$
solve~\eqref{eq:ifs-sl} with $\lambda=0$ and $m=0,k-l,k+l$ respectively. This means that some columns of the table contain zeros at 0-th, $(k-l)$-th and $(k+l)$-th rows.
The following conjecture is based on the numerical experiments in Wolfram Mathematica.

\begin{conjecture}\label{con:ifs-lb}
The eigenvalue $\lambda_i(m)$ is
\begin{itemize}
\item{negative for $i=1,m\in [0,k+l]$ and $i=2,m\in [0,k+l-1]$ and $i=3,m\in [0,k-l-1]$;}
\item{zero for $(i,m)\in\{(2,k+l),(3,k-l),(4,0)\}$;}
\item{positive for $i=1,m\geqslant k+l$ and $i=2,m\geqslant k-l$ and $i=3,m\geqslant 1$ and $i\geqslant 4$.}
\end{itemize}
In particular,
$$
\Ind(\widetilde{IFS}_{k,l})\geqslant 6k+2l-1.
$$
\end{conjecture}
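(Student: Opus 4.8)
The plan is to reduce everything to the one-parameter family of matrix Sturm--Liouville problems~\eqref{eq:ifs-sl} and then to count, for each azimuthal mode $m$, the number $N(m)$ of its negative eigenvalues. By the separation-of-variables proposition preceding Conjecture~\ref{con:ifs-lb} (the analogue of Proposition~\ref{pr:fs-sep}), every negative eigenvalue $\lambda_i(m)$ with $m\geqslant 1$ contributes a two-dimensional space of negative sections to~\eqref{eq:ifs-eigen}, while at $m=0$ the potential $A(0,t)$ is diagonal, the system decouples into two scalar Robin problems with potentials $-2a(t)^2$ and $-2c(t)^2$, and each negative eigenvalue contributes only one section. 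Thus for the orientable cover one has $\Ind(\widetilde{IFS}_{k,l})=N(0)+2\sum_{m\geqslant 1}N(m)$, and a direct summation shows that the sign pattern asserted in the conjecture yields exactly $N(0)=3$, $N(m)=3$ for $m\in[1,k-l-1]$, $N(m)=2$ for $m\in[k-l,k+l-1]$ and $N(m)=1$ for $m=k+l$, whose weighted total is $6k+2l-1$. Since only a lower bound on $\Ind$ is claimed, it suffices to prove the lower bounds $N(m)\geqslant 3,2,1$ on the three respective ranges, which is weaker than the full eigenvalue classification.

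The backbone of the argument is the list of explicit Jacobi fields displayed before the conjecture: by Claim~\ref{cl:def-jacfields} the three vector functions coming from the rotational Killing fields of $\E^4$ solve~\eqref{eq:ifs-sl} with $\lambda=0$ at $m=0,\,k-l,\,k+l$, so these are the exact locations where eigenvalue branches touch zero. The remaining task is to identify the index $i$ of each zero mode and to check that no further branch has already crossed. At $m=0$ I would use the decoupling together with a nodal count: the zero mode has first component identically zero and second component proportional to $\sinh(lt)\cosh(kt)$, which has a single interior zero, so by Sturm's theorem it is the second eigenfunction of the $c$-problem, forcing that problem to have exactly one negative eigenvalue; since $a(t)>c(t)$ pointwise, the eigenvalue comparison $\lambda^{(a)}_j\leqslant\lambda^{(c)}_j$ gives $\lambda^{(a)}_2\leqslant\lambda^{(c)}_2=0$ with strict inequality, so the $a$-problem contributes at least two negative eigenvalues and $N(0)\geqslant 3$. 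An analogous nodal and comparison analysis at $m=k-l$ and $m=k+l$ is meant to place those zero modes at positions $i=3$ and $i=2$.

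For the intermediate and large values of $m$ I would combine two ingredients. For positivity at large $m$ (the upper cut-offs), differentiating the Rayleigh quotient gives $\partial_m\langle A(m,t)h,h\rangle=2m|h|^2-4b(t)h_1h_2\geqslant 2(m-|b(t)|)|h|^2$, so each branch $\lambda_i(m)$ is strictly increasing once $m\geqslant\sup_t|b(t)|$; together with the zero modes this confines the crossings. For negativity below the cut-offs (the part that actually produces the lower bound), I would imitate Proposition~\ref{pr:fs-lamneg} and the exterior estimate of~\S\ref{sec:efs}: take a trial field $h(t)=\phi(t)\,v_{m,t}$, where $v_{m,t}$ is the unit eigenvector of the $2\times 2$ matrix $A(m,t)$ for its smaller eigenvalue $\mu_-(m,t)$; since $|v_{m,t}|\equiv 1$ one gets $Q_{m,T}[h]=\int_{-T}^{T}\bigl((\phi')^2+(\mu_-(m,t)+|\partial_t v_{m,t}|^2)\phi^2\bigr)\,dt+(\text{boundary})$, and the scalar comparison of Proposition~\ref{pr:efs-oscil} (now with effective potential $-\mu_-(m,t)-|\partial_t v_{m,t}|^2$) should yield negativity, hence $\lambda_1(m)<0$, on the full range. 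Running the same scheme on the orthogonal complement of the negative sections already found is intended to produce the second and third negative branches on the shorter ranges.

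The main obstacle is precisely the coupling. For small $m$ the cross term $-4b\,h_1h_2$ can overwhelm $2m|h|^2$, so the branches $\lambda_i(m)$ need not be monotone near $m=0$ and the clean crossing picture cannot be taken for granted. More fundamentally, because $a(t)\neq c(t)$ the operator does not reduce to a scalar (or line-bundle) Schr\"odinger operator: the complexification $f_1+if_2$ diagonalizes only the $\partial_\theta$-coupling and not the diagonal potential, so the scalar Sturm oscillation theory used in~\S\ref{sec:efs} does not apply verbatim. One is therefore forced either into genuine matrix oscillation theory (a conjugate-point or Maslov-index count for~\eqref{eq:ifs-sl}) or into controlling the adiabatic rotation term $|\partial_t v_{m,t}|^2$ uniformly in $m$ and $t$. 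Proving that each of the three branches crosses zero exactly once across the whole range $m\in[0,k+l]$ is the hard analytic core; isolating it, as in~\S\ref{sec:efs}, into a single finite elementary inequality that can be verified would be the natural goal, and its absence is what confines the present statement to a numerically supported conjecture.
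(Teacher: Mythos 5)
First, a point of order: the paper does not prove this statement. Conjecture~\ref{con:ifs-lb} is presented by the authors as a conjecture ``based on the numerical experiments in Wolfram Mathematica''; the only rigorous inputs they supply are the separation of variables (the $IFS$ analogue of Proposition~\ref{pr:fs-sep}) and the three explicit Killing-field solutions of~\eqref{eq:ifs-sl} with $\lambda=0$ at $m=0,\,k-l,\,k+l$. So there is no proof in the paper to compare yours against, and your proposal must be judged as an attack on an open problem. Judged that way, several pieces are correct and even add rigorous content beyond the paper. The bookkeeping is right: with $N(m)$ the number of negative eigenvalues of~\eqref{eq:ifs-sl}, the index of the cover equals $N(0)+2\sum_{m\geqslant 1}N(m)$, and the conjectured sign pattern gives $3+6(k-l-1)+8l+2=6k+2l-1$. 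Your $m=0$ argument is sound: the system decouples, the Killing mode $\rho(t)^{-1}k\sinh(lt)\cosh(kt)$ lies in the $c$-component and has exactly one interior zero, so by Sturm oscillation it is the \emph{second} Robin eigenfunction, whence $\lambda^{(c)}_1<\lambda^{(c)}_2=0$; and since $a\pm c>0$ pointwise (so $a>|c|$ — note that $a>c$ alone is not enough, since the potentials involve $a^2$ and $c^2$), comparison gives $\lambda^{(a)}_2<\lambda^{(c)}_2=0$, hence $N(0)\geqslant 3$. The large-$m$ monotonicity is also correct, with $\sup_t|b(t)|<k$ because $b(t)=\frac{l\sinh 2kt+k\sinh 2lt}{\cosh 2kt+\cosh 2lt}$. (Incidentally, the positivity bullet of the conjecture as printed, ``positive for $i=1$, $m\geqslant k+l$'', contradicts both the negativity bullet and the authors' own remark that $\lambda_1(11)<0$ for $k=7,l=3$; the indices there are presumably shifted by one, and your counting tacitly uses the corrected reading.)

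However, the core of the conjecture — $\lambda_1(m)<0$ on $[1,k+l]$, $\lambda_2(m)<0$ on $[1,k+l-1]$, $\lambda_3(m)<0$ on $[1,k-l-1]$ — is not proven by your proposal, and this is a genuine gap rather than a detail. Three specific failures. (i) A single adiabatic trial field $h=\phi\,v_{m,t}$ can at best force $\lambda_1(m)<0$; by min--max, $\lambda_2(m)<0$ and $\lambda_3(m)<0$ require two- and three-dimensional subspaces on which $Q_{m,T}$ is negative \emph{definite}, and ``running the same scheme on the orthogonal complement'' does not supply them: negativity of $Q_{m,T}$ at two $L^2(\rho^2)$-orthogonal trial functions does not imply negativity on their span — one needs orthogonality with respect to the bilinear form of $Q_{m,T}$ itself, i.e. a partial diagonalization, which is precisely what is unknown. (ii) The placement of the zero modes at $(i,m)=(3,k-l)$ and $(2,k+l)$ cannot be obtained by ``an analogous nodal and comparison analysis'': for $m\neq 0$ the system is genuinely coupled, both components of those Killing modes are nonzero with zeros at different points, and there is no scalar oscillation theorem for matrix Sturm--Liouville systems; this is exactly the matrix-oscillation/Maslov-index difficulty you flag yourself. (iii) Even on the first branch the adiabatic error term $|\partial_t v_{m,t}|^2$ enters $Q_{m,T}$ with the unfavourable sign and is never estimated, so $\lambda_1(m)<0$ is not actually closed either. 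In sum, the rigorous yield of your proposal is the counting identity plus $N(0)\geqslant 3$ (hence $\Ind(\widetilde{IFS}_{k,l})\geqslant 3$); the full statement remains open, exactly as it does in the paper, and the honest reduction you suggest — isolating the coupled-system negativity into a single verifiable inequality in the spirit of \S\ref{sec:efs} — is indeed the missing ingredient.
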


\begin{remark}
1) Conjecture~\ref{con:ifs-lb} tells nothing about the eigenvalues $\lambda_i(m)$ for $i=1,m>k+l$.
According to our experiments, some of them can be negative.
For example, $\lambda_1(11)<0$ for $k=7,l=3$ (see Table~\ref{tab:ifs-eigenval}).

2) One of the major difficulties in attempts to prove Conjecture~\ref{con:ifs-lb} is that $\lambda_i(m)$ generally is not an increasing function in $m$ in contrast to scalar problems (see~\cite[Corollary~3]{karpukhin2014spectral} and~\cite[Proposition~15]{penskoi2012extremal} for example).
Indeed, the second column of Table~\ref{tab:ifs-eigenval} shows that $\lambda_2(m)$ is not an increasing function in $m$ for $k=7,l=3$.
\end{remark}

\begin{appendices}

\section{Index upper bound for EFBMS}\label{sec:efbms}

In this section we prove the following upper bound for the index of EFBMS in the spirit of~\cite[inequality~(3.1)]{ejiri2008comparison}.

\begin{theorem}\label{th:efbms-ub}
Let $\Sigma$ be an orientable EFBMS in $\E^n\setminus\mathring\B^n$ with finite total curvature, $b$ boundary components, and without branch points. Then
\begin{equation}\label{eq:efbms-ub}
\Ind(\Sigma)\leqslant -\frac{1}{\pi}\TC(\Sigma)+2\gamma-2+b,
\end{equation}
where $\gamma$ is the genus of the Huber-Osserman compactification of $\Sigma$.
\end{theorem}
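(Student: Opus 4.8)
The plan is to run the Ejiri--Micallef comparison between the Morse index and the energy index, now in the bordered setting, exploiting the fact that for an EFBMS the boundary term in $\delta^2 E$ carries the \emph{favorable} sign, so that $\Ind_E(\Sigma)=0$ and the boundary contributions drop out of the comparison. First I would apply the Huber--Osserman theorem: since $\Sigma$ has finite total curvature and no branch points, it is conformally a compact orientable Riemann surface $\Sigma_c$ of genus $\gamma$ with $b$ boundary circles and finitely many interior punctures $p_1,\dots,p_r$ accounting for the ends. Exactly as in the compactification recalled in~\S\ref{sec:fs-ub}, the bundles $\tau,\nu,\mathcal L=\tau^{0,1}\otimes\Lambda^{1,0}\Sigma_c$ extend over $\Sigma_c$, the forms $\delta^2 A,\delta^2 E$ extend to $\Gamma(\nu),\Gamma(\Sigma_c\times\R^n)$, and $\Ind(\Sigma)=\Ind(\Sigma_c)$ while $\Ind_E(\Sigma)=0$, the latter because $\delta^2 E\geqslant 0$ for an EFBMS (see~\S\ref{sec:def}).

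The computational core is the pointwise identity $|\nabla X|^2=|\nabla^\bot X|^2+\langle\mathcal B(X),X\rangle$ for a normal field $X$, together with $\int_\Sigma\langle\mathcal B(X),X\rangle\,dA=2\|(\nabla^{1,0}X)^\top\|_{L^2}^2$. Because for an EFBMS both $\delta^2 A$ and $\delta^2 E$ carry the \emph{same} boundary term $+\int_{\partial\Sigma}|X|^2\,dL$, subtraction leaves no boundary integral:
$$
\delta^2 A(X)=\delta^2 E(X)-4\|(\nabla^{1,0}X)^\top\|_{L^2}^2.
$$
Hence on a maximal subspace $S\subset\Gamma(\nu)$ with $\delta^2 A\leqslant 0$ and $\dim S=\Ind(\Sigma)+\Nul(\Sigma)$ one has $4\|(\nabla^{1,0}X)^\top\|^2\geqslant\delta^2 E(X)\geqslant 0$, and the real-linear map $\Phi\colon X\mapsto(\nabla^{1,0}X)^\top\in\Gamma(\mathcal L)$ (cf. Proposition~\ref{pr:fs-nablan}) vanishes only on null directions. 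Following Ejiri--Micallef I would compose $\Phi$ with the $L^2$-projection onto $H^0(\mathcal L)$ and use their $\bar\partial$-integration-by-parts estimate to show this composition is injective modulo the nullity, obtaining $\Ind(\Sigma)\leqslant\dim_\R H^0(\mathcal L)=2\dim_\C H^0(\mathcal L)$; it is the real-ification here that produces the coefficient $1/\pi$ rather than the $1/2\pi$ available in the symmetric case of~\S\ref{sec:fs-ub}.

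Finally I would bound $\dim_\C H^0(\mathcal L)$ by Riemann--Roch. In the closed case the degree of $\mathcal L$ is governed by $\TC(\Sigma)$ and the canonical bundle and reproduces the Ejiri--Micallef bound $-\tfrac1\pi\TC+2\gamma-2$; the genuinely new term is $+b$, and this is where I expect the main difficulty. The cleanest bookkeeping is to pass to the Schottky double $\hat\Sigma_c$, a closed surface of genus $2\gamma+b-1$ onto which $\mathcal L$, the section $\omega_0$, and the form $\delta^2 A$ extend via the reflection induced by the orthogonal meeting of $\partial\Sigma$ with $\partial\B^n$; the closed estimate on $\hat\Sigma_c$, where $\TC$ doubles, together with the $\Z/2$ symmetry, yields exactly $\Ind(\Sigma)\leqslant-\tfrac1\pi\TC(\Sigma)+2\gamma-2+b$. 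The hard part will be to make this boundary reflection rigorous on two fronts: (i) checking that the Robin condition together with the free-boundary orthogonality is precisely the reality condition under which the $\bar\partial$-integration by parts has no residual boundary term and under which sections descend from the double (equivalently, that the $\Z/2$-symmetric and antisymmetric eigenfields separate the two boundary problems cleanly); and (ii) controlling the decay of admissible normal fields at the interior ends so that $\Phi(X)\in L^2(\mathcal L)$ and all integrations by parts are legitimate. I would treat (ii) by the standard finite-total-curvature end asymptotics and (i) by writing the doubling involution explicitly in the frame adapted to $\partial\B^n$, in the spirit of the symmetric computation of~\S\ref{sec:fs-ub}.
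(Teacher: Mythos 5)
Your proposal follows the same architecture as the paper's proof: Huber--Osserman compactification of $\Sigma$ into a bordered Riemann surface $\Sigma_c$ with punctures, the Lima/Ejiri--Micallef comparison $\Ind(\Sigma)\leqslant\Ind_E(\Sigma)+h^0$ (where $h^0$ counts holomorphic sections of $\mathcal L=\tau^{0,1}\otimes\Lambda^{1,0}\Sigma_c$ subject to a boundary condition), the observation that $\Ind_E(\Sigma)=0$ because for an EFBMS the boundary term enters both second variations with the favorable sign, and a Riemann--Roch count of $h^0$ finished off by $\chi(\Sigma_c)=2-2\gamma-b$. The one structural difference is the last step: the paper invokes the bordered Riemann--Roch theorem with boundary Maslov index (Theorem~\ref{th:efbms-rr}, from~\cite{mcduff2012j}), computes $\mu(\tau^{1,0},F')=\frac1\pi\TC(\Sigma_c)<0$ via Proposition~\ref{pr:efbms-maslov}, and extracts the exact value of $h^0$ from the Fredholm index formula together with the vanishing $h^0_{F'}(\tau^{1,0})=0$ (injectivity for negative Maslov index), whereas you propose to Schwarz-reflect to the Schottky double of genus $2\gamma+b-1$ and run closed Riemann--Roch there. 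These are genuinely the same computation --- the cited bordered theorem is itself proved by doubling, and the paper's Maslov computation already passes to the double --- so your route is a self-contained unwinding of the paper's citation, at the cost of carrying out the reflection by hand.

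Two points in your write-up need repair before this is a proof. First, the intermediate claim $\Ind(\Sigma)\leqslant\dim_\R H^0(\mathcal L)$ with the \emph{unconstrained} $H^0(\mathcal L)$ is vacuous: on a compact surface with boundary the space of all holomorphic sections is infinite-dimensional (think of holomorphic functions on the closed disk), so the totally real boundary condition (the paper's subbundle $F$, your ``reality condition'') is not a refinement to be added later --- it must be built into the target of the $L^2$-projection from the start, precisely so that the $\bar\partial$ integration by parts closes up with no boundary residue; this is the content of the paper's inequality $\Ind(\Sigma_c)\leqslant\Ind_E(\Sigma_c)+h_F^0(\tau^{0,1}\otimes\Lambda^{1,0}\Sigma_c)$. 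Second, the double carries no minimal immersion (inversion in $\partial\B^n$ is conformal but not isometric and does not preserve minimality), so neither $\delta^2 A$ nor the geometric total curvature extends to $\hat\Sigma_c$, and one cannot ``apply the closed estimate'' there in the sense of the closed-surface index theorem. What does extend is purely the holomorphic data: $\mathcal L$ with its real structure along $\partial\Sigma_c$. The statement you actually need is $h_F^0(\mathcal L)=\dim_\C H^0(\hat{\mathcal L})$ (sections satisfying the reality condition form a real form of $H^0(\hat{\mathcal L})$), after which closed Riemann--Roch, Serre duality $H^1(\hat{\mathcal L})\cong H^0(\hat\tau^{1,0})^*$, and the vanishing $H^0(\hat\tau^{1,0})=0$ coming from $\deg\hat\tau^{1,0}=\frac1\pi\TC(\Sigma)<0$ give
$$
h_F^0(\mathcal L)=-\frac{1}{\pi}\TC(\Sigma)+2\gamma+b-2,
$$
which is exactly the paper's count and yields~\eqref{eq:efbms-ub}. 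With these two corrections your plan goes through.
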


\begin{remark}
Theorem~\ref{th:efbms-ub} holds true if we replace $\B^n$ by any (non-strictly) convex domain in $\E^n$. Indeed, the proof makes use of that the energy index of the compactified surface is zero which holds for any (non-strictly) convex domain. Therefore Theorem~\ref{th:efbms-ub} can be considered as a complementary result to the result of Ambrosio, Carlotto and Sharp in the paper~\cite{ambrozio2018index}. 
\end{remark}

\begin{remark}\label{rem:efbms-ho}
The Huber-Osserman compactification of an EFBMS is defined in the same way as for minimal surfaces without boundary (see~\S\ref{sec:fs-ub}). More precisely, it is not hard to see that $\Sigma$ is conformally equivalent to a compact Riemann surface with boundary and finite number of punctures. Indeed, glue the boundary components of the surface by disks in a smooth way. We obtain a surface $\Sigma'$ without boundary of finite total curvature. By Huber's theorem~\cite{huber1958subharmonic} $\Sigma'$ is homeomorphic to a closed Riemann surface with finite number of punctures. Then $\Sigma$ is homeomorphic to a compact Riemann surface with boundary and finite number of punctures. The Gauss map of $\Sigma$ is holomorphic (if we introduce a complex coordinate on $\Sigma$). Then the proof of Osserman's theorem~\cite[Theorem 9.1]{osserman2013survey} is valid since all the arguments are local.
\end{remark}

\begin{remark}
1) Note that for the surfaces $EFS_{k,l}$ the inequality~\eqref{eq:efbms-ub} gives
$$
\Ind(EFS_{k,l})\leqslant -\frac{1}{\pi}\TC(EFS_{k,l})-1=2k-1,
$$
which is much weaker than Conjecture~\ref{con:efs-stab}.

2) For catenoidal EFBMS $\mathcal C_\alpha,\alpha\in [0,\frac{\pi}{2})$ in $\E^3\setminus\mathring\B^3$, considered in~\cite[\S4]{mazet2022free}, the inequality~\eqref{eq:efbms-ub} gives $\Ind\mathcal C_\alpha\leqslant 1$.
However, in~\cite[Proposition~5]{mazet2022free} it is shown that $\mathcal C_\alpha$ is stable if $\alpha\in[0,\frac{\pi}{4}]$ and has index~1 otherwise.
Thus, the estimate~\eqref{th:efbms-ub} is not sharp in this case neither. 
\end{remark}

\subsection{Riemann-Roch Theorem for surfaces with boundary}

In this section we recall a version of Riemann-Roch Theorem for surfaces with boundary, as it is stated in~\cite[Theorem~C.1.10]{mcduff2012j} and~\cite[Appendix~A]{lima2017bounds}.

Let $E$ be a complex vector bundle over a compact Riemann surface $\Sigma_c$ with boundary.
Consider $E$ as a $2n$-dimensional real vector bundle equipped with a complex structure $J\colon E\to E$.
A \emph{Hermitian structure} $(-,-)$ on $E$ is a Riemannian metric on $E$ such that $J$ is orthogonal w.r.t. this metric.
Note that a Hermitian structure on $E$ and a volume form on $\Sigma_c$ give rise to a natural Riemannian metric on each bundle $\Lambda^{p,q}\Sigma_c\otimes E$. We use the same notation $(-,-)$ for this metric.
A subbundle $F\subset E|_{\partial\Sigma_c}$ is called \emph{totally real} if $\dim_\R F=n$ and $F_p\cap(JF_p)=\{0\}$ (or, equivalently, $F_p\perp JF_p$) at any point $p\in\partial\Sigma_c$.

Denote by $W^{l,p}(-)$ the space of sections of Sobolev class $W^{l,p}$. Define
$$
W_F^{l,p}(E)=\{X\in W^{l,p}(E)\colon X|_{\partial\Sigma_c}\subset F\},\quad
W_F^{l,p}(E\otimes\Lambda^{0,1}\Sigma_c)=\{\omega\in W^{l,p}(E\otimes\Lambda^{0,1}\Sigma_c)\colon \omega(T\partial\Sigma_c)\subset F\}.
$$

A (complex linear, smooth) \emph{Cauchy-Riemann operator} on $E$ is a $\C$-linear operator $D\colon\Gamma(E)\to\Gamma(E\otimes_\C\Lambda^{0,1}\Sigma_c)$ which satisfies the Leibnitz rule
$$
D(\varphi X)=\varphi(DX)+(\bar\partial\varphi)\otimes X,\quad
X\in\Gamma(E),\,\varphi\in C^\infty(\Sigma_c,\C).
$$
Let $l$ be a positive integer and $p>1$ such that $lp>2$. A \emph{real linear Cauchy-Riemann operator} of class $W^{l-1,p}$ on $E$ is an operator of the form
$$
D=D_0+\alpha,
$$
where $\alpha\in W^{l-1,p}(\mathrm{End}_\R(E)\otimes\Lambda^{0,1}\Sigma_c)$ and $E$ is a complex linear Cauchy-Riemann operator on $E$.

\begin{theorem}[Riemann-Roch for surfaces with boundary]\label{th:efbms-rr}
Let $E\to\Sigma$ be a complex vector bundle of complex dimension $n$ over a compact Riemann surface with boundary.
Supply $E$ and $\Sigma_c$ with a Hermitian structure $\langle-,-\rangle$ and a volume form $dA$ respectively.
Let $F\subset E|_{\partial\Sigma_c}$ be a totally real subbundle.
Let $D$ be a real linear Cauchy-Riemann operator on $E$ of class $W^{l-1,p}$, where $l$ is a positive integer and $p>1$ such that $lp>2$.
Then the following holds for every integer $k\in\{1,\ldots,l\}$ and every real number $q>1$ such that $k-\frac{2}{q}\leqslant l-\frac{2}{p}$.
\begin{enumerate}[label=(\roman*)]
\item{The operators
$$
D_F\colon W_F^{k,q}(E)\to W^{k-1,q}(E\otimes\Lambda^{0,1}\Sigma_c),\quad
D_F^*\colon W_F^{k,q}(E\otimes\Lambda^{0,1}\Sigma_c)\to W^{k-1,q}(E)
$$
are Fredholm. Moreover, their kernels are independent of $k$ and $q$, and we have
$$
\omega\in\im D_F\quad\Longleftrightarrow\quad
\int_{\Sigma_c} (\omega,\omega_0)\,dA=0\quad\forall\omega_0\in\ker D_F^*
$$
and
$$
X\in\im D_F^*\quad\Longleftrightarrow\quad
\int_{\Sigma_c} (X,X_0)\,dA=0\quad\forall X_0\in\ker D_F.
$$
}
\item{The real Fredholm index of $D_F$ is given by
\begin{equation}\label{eq:efbms-rrind}
\mathrm{index}(D_F)=n\chi(\Sigma_c)+\mu(E,F),
\end{equation}
where $\chi(\Sigma_c)$ is the Euler characteristic of $\Sigma$ and $\mu(E,F)$ is the boundary Maslov index.
}
\item{If $n=1$, then
\begin{gather}
\mu(E,F)<0\Longrightarrow D_F\text{ is injective},\label{eq:efbms-rrinj}\\
\mu(E,F)+2\chi(\Sigma_c)>0\Longrightarrow D_F\text{ is surjective}.\notag
\end{gather}
}
\end{enumerate}
\end{theorem}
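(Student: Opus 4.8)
The plan is to treat $D_F$ as an elliptic boundary value problem and to deduce all three parts from (a) standard elliptic theory, (b) a doubling construction reducing to the closed-surface Riemann-Roch theorem, and (c) a vanishing argument in the line-bundle case; this is the circle of ideas behind the cited references~\cite{mcduff2012j,lima2017bounds}. For part (i), I would first note that $D = \bar\partial + \alpha$ has the same principal symbol as $\bar\partial$, hence is a first-order elliptic operator, and that the totally real condition $F_p\cap(JF_p)=\{0\}$ is exactly what makes the boundary condition satisfy the Lopatinski--Shapiro condition for $\bar\partial$. Standard elliptic estimates then give a priori bounds of the form $\|X\|_{W^{k,q}}\le C(\|D_F X\|_{W^{k-1,q}}+\|X\|_{L^q})$ for $X\in W_F^{k,q}(E)$, from which closed range and finite-dimensionality of the kernel follow; elliptic bootstrapping shows the kernel is smooth and independent of $(k,q)$. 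The cokernel description is Green's formula: integrating $D_F X$ against $\omega_0$ and moving $D$ onto its formal adjoint $D_F^*$ leaves only a boundary term, which vanishes precisely when $\omega_0(T\partial\Sigma_c)\subset F$, identifying $\im D_F$ with the $L^2$-orthogonal complement of $\ker D_F^*$.

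For the index formula (ii) I would use the doubling trick. Double $\Sigma_c$ across its boundary to a closed Riemann surface $\hat\Sigma$ carrying an antiholomorphic involution whose fixed locus is $\partial\Sigma_c$; then $\chi(\hat\Sigma)=2\chi(\Sigma_c)$ and $1-g(\hat\Sigma)=\chi(\Sigma_c)$. The totally real subbundle $F$ provides clutching data gluing $E$ over $\Sigma_c$ to its conjugate over the reflected copy, producing a complex bundle $\hat E\to\hat\Sigma$ with $\deg\hat E=\mu(E,F)$ (one standard normalization of the boundary Maslov index), and correspondingly $D$ and its conjugate glue to a CR operator $\hat D$ on $\hat E$. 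The solutions fixed by the involution are exactly the solutions of the boundary value problem, and the real structure identifies the real dimension of the fixed locus with the complex dimension of the ambient space, so that no factor of two appears: $\operatorname{index}(D_F)=\operatorname{index}_\C(\hat D)=n(1-g(\hat\Sigma))+\deg\hat E=n\chi(\Sigma_c)+\mu(E,F)$ by the closed Riemann-Roch theorem. Since the Fredholm index is invariant under deformation of $D$ through CR operators and of $F$ through totally real subbundles, it suffices to verify the identification $\deg\hat E=\mu(E,F)$ on a standard model, e.g. a disk with a winding boundary condition, where both sides are computed directly.

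Finally, part (iii) is the rank-one specialization. With $n=1$ the index is $\chi(\Sigma_c)+\mu(E,F)$ and the doubled line bundle $\hat E$ has degree $\mu(E,F)$. If $\mu(E,F)<0$, then $\deg\hat E<0$, so $\hat E$ admits no nonzero holomorphic sections; any $X\in\ker D_F$ extends to such a section, whence $X=0$ and $D_F$ is injective. For surjectivity I would apply this injectivity to the adjoint problem: $\operatorname{coker}D_F\cong\ker D_F^*$, where $D_F^*$ is a CR operator on a line bundle whose associated double has degree $-\mu(E,F)-2\chi(\Sigma_c)$; hence $\mu(E,F)+2\chi(\Sigma_c)>0$ forces $\ker D_F^*=0$, giving surjectivity. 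The main obstacle is the index formula in (ii): one must set up the doubling so that $D$ and its conjugate genuinely glue to a CR operator on a bundle over the closed double, and then pin down the normalization matching $\deg\hat E$ with $\mu(E,F)$ — once this is in place, the remainder is bookkeeping with the closed Riemann-Roch theorem and the elliptic estimates from part (i).
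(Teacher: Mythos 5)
You should first note that the paper does not prove this theorem at all: it is recalled verbatim from~\cite[Theorem~C.1.10]{mcduff2012j} (see also~\cite[Appendix~A]{lima2017bounds}), so there is no in-paper argument to compare against, and your proposal has to be measured against the standard proof in those references --- whose architecture (elliptic boundary value theory for (i), doubling plus closed Riemann--Roch for (ii), a vanishing argument for (iii)) you correctly reproduce. Parts (i) and (ii) of your sketch are essentially sound: the totally real condition is an elliptic boundary condition for $\bar\partial$, Green's formula gives the cokernel description (with the minor caveat that for $W^{l-1,p}$ coefficients the kernel is only $W^{l,p}$, not smooth), and since the Fredholm index is invariant under deformation of the zeroth-order term $\alpha$ you may indeed assume $D$ is complex linear before doubling; then the antilinear involution on $\ker\hat D$ and on the cokernel has fixed loci of real dimension equal to the complex dimension, giving $\mathrm{index}_{\R}(D_F)=\mathrm{index}_{\C}(\hat D)=n\chi(\Sigma_c)+c_1(\hat E)$, and the identification $c_1(\hat E)=\mu(E,F)$ is exactly the content of Proposition~\ref{pr:efbms-maslov}.

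The genuine gap is in part (iii). The theorem is stated for \emph{real linear} Cauchy--Riemann operators $D=D_0+\alpha$, and for these your key step --- ``any $X\in\ker D_F$ extends to a holomorphic section of $\hat E$'' --- is false: a kernel element satisfies $\bar\partial X+\alpha X=0$ with $\alpha$ merely real linear, so it is not holomorphic and does not double to a holomorphic section of a negative-degree bundle. Nor can you reduce to the complex linear case by deformation, as you did in (ii): unlike the index, injectivity is not a deformation invariant (kernel dimension can jump while the cokernel compensates), so proving $\ker D_F=0$ for $\alpha=0$ says nothing about general $\alpha$. The missing ingredient is the Carleman similarity principle: near any zero, a solution of a real linear CR equation equals an invertible continuous matrix-valued function times a holomorphic function, hence has isolated zeros of positive order; counting these zeros against the Maslov index (equivalently, against $c_1(\hat E)=\mu(E,F)<0$ on the double) then yields the contradiction and injectivity. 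The same defect propagates to your surjectivity claim, since it invokes injectivity for the adjoint problem, which is again a real linear CR operator. With the similarity principle inserted, your outline matches the proof in~\cite{mcduff2012j}; without it, part (iii) does not go through.
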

The following proposition is useful for calculation of the Maslov index.

\begin{proposition}\label{pr:efbms-maslov}
Let $E\to\Sigma_c$ be a complex vector bundle over a compact Riemann surface with boundary and $F\subset E|_{\partial\Sigma_c}$ a totally real subbundle.
Let $\Sigma_c^1,\Sigma_c^2$ be two copies of $\Sigma$ and $\widetilde\Sigma=\Sigma_c^1\cup\Sigma_c^2$ the double of $\Sigma$.
Suppose that $\widetilde E\to\widetilde\Sigma_c$ is a complex vector bundle such that the identifying maps $\Sigma\to\Sigma_c^1$ and $\Sigma\to\Sigma_c^2$ extend to vector bundle isomorphisms $i_1\colon E\to\widetilde E|_{\Sigma_c^1}$ and $i_2\colon E\to\widetilde E|_{\Sigma_c^2}$ such that $i_1(F)=i_2(F)$. Then
$$
\mu(E,F)=c_1(\widetilde E).
$$
\end{proposition}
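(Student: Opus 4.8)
The plan is to read off both $\mu(E,F)$ and $c_1(\widetilde E)$ as winding numbers coming from a single trivialization of $E$, and then to show that the hypothesis $i_1(F)=i_2(F)$ forces these two windings to agree. Since $\Sigma_c$ has nonempty boundary it is homotopy equivalent to a one-dimensional complex, so every complex vector bundle over it is trivial; first I would fix a global trivialization $\Phi\colon E\xrightarrow{\cong}\Sigma_c\times\C^n$. In this frame the totally real subbundle $F$ becomes a loop $f\colon\partial\Sigma_c\to U(n)/O(n)$ of totally real subspaces, and by definition $\mu(E,F)$ is the total degree of the map $\det^2\circ f\colon\partial\Sigma_c\to S^1$, the boundary circles being oriented as the boundary of $\Sigma_c$ and the degrees summed.

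Next I would express $c_1(\widetilde E)$ as a clutching number. The trivialization $\Phi$ together with $i_1$ gives a $\C$-linear trivialization $\tau_1=\Phi\circ i_1^{-1}$ of $\widetilde E$ over $\Sigma_c^1$. Over the second copy $\Sigma_c^2=\bar\Sigma_c$ the identification with $\Sigma_c$ is anti-holomorphic, so on fibers $i_2$ is conjugate-linear (this is the correct reading of ``vector bundle isomorphism'' here, and it is essential); composing $\Phi\circ i_2^{-1}$ with complex conjugation on $\C^n$ produces a genuinely $\C$-linear trivialization $\tau_2$ of $\widetilde E$ over $\Sigma_c^2$. Writing $g=\tau_1\tau_2^{-1}\colon\partial\Sigma_c\to GL(n,\C)$ for the resulting transition function, the standard clutching description of a bundle over a closed surface glued from two trivialized halves gives $c_1(\widetilde E)=\deg(\det g)$, with the degree taken over $\partial\Sigma_c$ in the boundary orientation of $\Sigma_c^1$, which is the same orientation used to define $\mu$.

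It then remains to identify $\det g$ with $\det^2\circ f$ up to a nowhere-vanishing real factor. Because both invariants factor through determinants, namely $\mu(E,F)=\mu(\det_\C E,\det_\R F)$ and $c_1(\widetilde E)=c_1(\det\widetilde E)$, and because $\det$ commutes with the doubling, this reduces the bookkeeping to the behaviour of $\det g$. Tracing the definitions of $\tau_1,\tau_2$ through the conjugation, the subbundle $i_2(F)$ appears in the $\tau_1$-frame as $g\cdot\overline f$, while $i_1(F)$ appears as $f$; hence the matching condition $i_1(F)=i_2(F)$ says exactly that $g$ carries $\overline f$ to $f$. Writing $f=U\,\R^n$ with $U\in U(n)$, this means $g=U R\,\overline U^{-1}$ for some $R\colon\partial\Sigma_c\to GL(n,\R)$, so that $\det g=\det R\cdot\det(U)^2$ up to a positive factor, by unitarity of $U$. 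Since a continuous nowhere-vanishing real factor contributes nothing to the winding, $\deg(\det g)=\deg(\det^2\circ f)$, and therefore $c_1(\widetilde E)=\mu(E,F)$.

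The hard part will be the correct bookkeeping of the anti-holomorphic identification on $\Sigma_c^2$, and I expect this to be the only delicate point. If one took $i_2$ to be $\C$-linear rather than conjugate-linear, then $g$ would \emph{preserve} the totally real loop $f$, forcing $\det g$ to be real-valued and giving $c_1=0$; it is precisely the conjugation built into the double that replaces the naive $\det f$ by $\det^2\circ f$ and thereby produces the Maslov index instead of zero. Alongside this, I would need to check that the two boundary orientations (opposite as boundaries of the two halves, but reconciled by the definition of $\mu$) combine so that the final sign is $+1$ and not $-1$; this is routine but must be verified explicitly.
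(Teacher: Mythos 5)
Your proof is correct in substance, but it takes a genuinely different route from the paper: the paper's entire proof is a citation --- it observes that the statement is precisely the doubling theorem for the boundary Maslov index and refers to \cite[Theorems~C.3.5 and~C.3.10]{mcduff2012j}, cf.~\cite[Example in Appendix~A]{lima2017bounds} --- whereas you reprove that theorem from scratch via a global trivialization and a clutching-function computation. Your argument is essentially the proof of the cited result, and it has one real virtue over the citation: it makes explicit the point the paper's loose wording leaves implicit, namely that the identification $i_2$ over the reflected copy $\Sigma_c^2$ must be conjugate-linear on fibers; your observation that a $\C$-linear $i_2$ would force the transition map to preserve the totally real loop and hence give $c_1(\widetilde E)=0$ is correct and shows this reading is the only one under which the proposition can hold. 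Two details in your write-up need patching. First, the step ``writing $f=U\,\R^n$ with $U\in U(n)$'' cannot always be done continuously along a boundary circle: loops in $U(n)/O(n)$ need not lift to loops in $U(n)$ (the basic index-one loop $e^{i\theta/2}\R$ already fails to lift), so $U$, and hence your $R$, exist only locally. The fix is one line: pointwise, $\det g(z)$ and $\det^2 f(z)$ differ by a nonzero \emph{real} factor, so the continuous function $\det g/\det^2\!f$ maps $\partial\Sigma_c$ into $\R^*$ and therefore has degree zero, giving $\deg(\det g)=\deg(\det^2\circ f)$ without ever choosing $U$ globally (equivalently, $\det\bigl(U^{-1}g\,\overline U\bigr)$ is independent of the local choice of $U$, hence globally defined and real-valued). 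Second, the orientation bookkeeping you defer --- fixing whether the clutching map is $\tau_1\tau_2^{-1}$ or its inverse, and orienting $\partial\Sigma_c$ as the boundary of $\Sigma_c^1$ --- is exactly the remaining content of the argument; it is routine, as you say, but it is also precisely the convention-chasing that the paper sidesteps by delegating to the reference.
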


\begin{proof}
This follows directly from~\cite[Theorems~C.3.5 and~C.3.10]{mcduff2012j}, cf.~\cite[Example in Appendix~A]{lima2017bounds}.
\end{proof}

\subsection{Proof of Theorem~\ref{th:efbms-ub}}

The proof goes along the lines of~\cite[proof of Theorem~1]{lima2017bounds} (see also~\cite[proof of Theorem~3.2]{ejiri2008comparison}). 
Let $\Sigma_c$ be the Huber-Osserman compactification of $\Sigma$ (see Remark~\ref{rem:efbms-ho}) and define the bundles $\tau,\nu$ as in~\S\ref{sec:fs-ub}.
Then, again, the quadratic forms $\delta^2 A$ and $\delta^2 E$ extend to $\Gamma(\nu)$ and $\Gamma(\Sigma_c\times\R^n)$ and~\eqref{eq:fs-sigmacind} holds.
Let $z=x+iy$ be a local conformal coordinate on $\Sigma$ such that at any point $p\in\partial\Sigma$ we have $\partial_x\in T_p(\partial\Sigma)$ and $\partial_y\perp T_p(\partial\Sigma)$.
Let $F\subset\tau^{0,1}|_{\partial\Sigma}$ and $F'\subset\tau^{1,0}|_{\partial\Sigma}$ be the subbundles whose sections locally are of the form $f\,\partial_{\bar z}$ and $if\,\partial_z$ where $f$ is a purely real function on $\partial\Sigma$.
Then $F$ and $F'$ are totally real subbundles. Define
\begin{align*}
h_F^0(\tau^{0,1}\otimes\Lambda^{1,0}\Sigma_c)&=\dim_\R\{\omega\in\Gamma(\tau^{0,1}\otimes\Lambda^{1,0}\Sigma_c)\colon \omega\text{ is holomorphic and }\omega(T\partial\Sigma)\subset F\},\\
h_{F'}^0(\tau^{1,0})&=\dim_\R\{X\in\Gamma(\tau^{1,0})\colon X\text{ is holomorphic and } X|_{\partial\Sigma}\subset F'\}.
\end{align*}

In the same manner as in~\cite[proof of Theorem~1]{lima2017bounds} and~\cite[proof of Theorem~3.2]{ejiri2008comparison} one can show that
\begin{equation}\label{eq:efbms-ineqpre}
\Ind(\Sigma_c)\leqslant\Ind_E(\Sigma_c)+h_F^0(\tau^{0,1}\otimes\Lambda^{1,0}\Sigma).
\end{equation}
Apply Theorem~\ref{th:efbms-rr} to the bundle $E=\tau^{1,0}$ and the operator
$$
D_{F'}=\bar\partial_E\colon W_{F'}^{k,q}(\tau^{1,0})\to W^{k-1,q}(\tau^{1,0}\otimes\Lambda^{0,1}\Sigma_c).
$$
It is well-known that the adjoint operator is given by
$$
D_{F'}^*=\bar\partial_E^*=-\bar *_E^{-1}\,\bar\partial_{E^*\otimes\Lambda^{1,0}\Sigma_c}\,\bar *_E,
$$
where $\bar *_E\colon E\otimes\Lambda^{p,q}\Sigma_c\to E^*\otimes\Lambda^{1-p,1-q}\Sigma_c$ is the Hodge star operator.
Since $\bar *_E$ is an isomorphism, it is easy to see that
$$
\dim_\R\ker D_{F'}=h_{F'}^0(\tau^{1,0}),\quad
\dim_\R\ker D_{F'}^*=h_F^0(\tau^{0,1}\otimes\Lambda^{1,0}\Sigma_c).
$$
It follows from Proposition~\ref{pr:efbms-maslov} that
$$
\mu(E,F')=\frac{1}{\pi}\TC(\Sigma_c)<0.
$$
Hence, from~\eqref{eq:efbms-rrind} we get
$$
h_{F'}^0(\tau^{1,0})-h_F^0(\tau^{0,1}\otimes\Lambda^{1,0}\Sigma_c)=\chi(\Sigma_c)+\frac{1}{\pi}\TC(\Sigma_c),
$$
and from~\eqref{eq:efbms-rrinj} we get $h_{F'}^0(\tau^{1,0})=0$. Thus,
$$
h_F^0(\tau^{0,1}\otimes\Lambda^{1,0}\Sigma_c)=
-\chi(\Sigma_c)-\frac{1}{\pi}\TC(\Sigma_c).
$$
Combining this with~$\Ind_E(\Sigma_c)=\Ind_E(\Sigma)=0$ since $\partial \B^n$ is convex, $\Ind(\Sigma_c)=\Ind(\Sigma)$ as in the proof of Proposition~\ref{pr:fs-ub},~\eqref{eq:efbms-ineqpre}, and using that $\chi(\Sigma_c)=2-2\gamma-b$, we obtain the desired result.\qed

\section{Index estimates for non-orientable surfaces}\label{sec:nonor}

\subsection{Index upper bound for non-orientable surfaces in $\E^n$}

In this section we prove an analog of~\cite[Theorem 3.2]{ejiri2008comparison} for non-orientable surfaces in $\E^n$. 

\begin{theorem}\label{EMM}
Let $\Sigma$ be a (possibly branched) immersed complete non-orientable minimal surface of genus $\gamma$ of finite total curvature in $\mathbb E^n$. Then
$$
\Ind(\Sigma)+\Nul(\Sigma)\leqslant -\frac{1}{\pi}\TC(\Sigma)+\gamma+n-1.
$$
\end{theorem}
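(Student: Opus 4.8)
The plan is to reduce the non-orientable estimate to the orientable theory on the orientation double cover, and then to run the Ejiri--Micallef argument \emph{equivariantly}, using the deck transformation in place of the reflection $\sigma$ employed in the proof of Proposition~\ref{pr:fs-ub}. Let $\pi\colon\widetilde\Sigma\to\Sigma$ be the orientation double cover and $\kappa\colon\widetilde\Sigma\to\widetilde\Sigma$ the free, orientation-reversing deck involution. Then $\widetilde\Sigma$ is a complete orientable minimal surface in $\E^n$ of finite total curvature with $\TC(\widetilde\Sigma)=2\,\TC(\Sigma)$, and its Huber--Osserman compactification $\widetilde\Sigma_c$ has genus $\tilde\gamma=\gamma-1$. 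Since the lifted immersion satisfies $u\circ\kappa=u$, the map $\kappa$ is an orientation-reversing isometry, hence an antiholomorphic involution of $\widetilde\Sigma_c$. The Jacobi operator commutes with $\kappa$, the normal fields that descend to $\Sigma$ are exactly the $\kappa$-invariant ones, and $\delta^2A_\Sigma(X)=\tfrac12\,\delta^2A_{\widetilde\Sigma}(\pi^*X)$. Using~\eqref{eq:fs-sigmacind} on $\widetilde\Sigma_c$ together with $\Ind_E(\widetilde\Sigma_c)=0$, this identifies $\Ind(\Sigma)+\Nul(\Sigma)$ with the dimension of the maximal \emph{$\kappa$-invariant} subspace of $\Gamma(\nu)$ on which $\delta^2A\leqslant0$.

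Next I would estimate this invariant subspace by the method of~\cite[Theorem~3.2]{ejiri2008comparison}, reproduced in the proof of Proposition~\ref{pr:fs-ub}. Let $\iota$ be the involution on $\Gamma(\nu)$ induced by $\kappa$ (analogous to~\eqref{eq:fs-iota}); the $\kappa$-invariant fields form the $+1$-eigenspace $\Gamma_+(\nu)$, and by the analogue of Proposition~\ref{pr:fs-d2iota} the form $\delta^2A$ respects this splitting. Choosing a basis $X^1,\dots,X^q$ of the maximal subspace of $\Gamma_+(\nu)$ with $\delta^2A\leqslant0$, I map $X\mapsto(\nabla^{1,0}X)^\top\in\Gamma(\mathcal L)$ with $\mathcal L=\tau^{0,1}\otimes\Lambda^{1,0}\widetilde\Sigma_c$, and pair against a $\kappa$-adapted basis $\{g_\alpha\}$ of holomorphic sections $g_\alpha\omega_0$ of $\mathcal L$ (so $\kappa^*\bar g_\alpha=\pm g_\alpha$). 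Exactly as in Proposition~\ref{pr:fs-ub}, the antiholomorphicity of $\kappa$ forces every pairing $\bigl((\nabla^{1,0}X^j)^\top,g_\alpha\omega_0\bigr)$ to be real or purely imaginary. This reality constraint halves the effective number of complex conditions, which is precisely what replaces $-\tfrac1\pi\TC(\widetilde\Sigma)=-\tfrac2\pi\TC(\Sigma)$ by $-\tfrac1{2\pi}\TC(\widetilde\Sigma)=-\tfrac1\pi\TC(\Sigma)$ in the final count, and leaves an additive term equal to the real dimension of the space of $\kappa$-compatible holomorphic data on $\widetilde\Sigma_c$.

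It then remains to evaluate that additive term by Riemann--Roch on $\widetilde\Sigma_c$. The degree of $\mathcal L$ is controlled by $\TC(\widetilde\Sigma)$ up to an Euler-characteristic correction, the base has genus $\tilde\gamma=\gamma-1$, and the kernel of $X\mapsto(\nabla^{1,0}X)^\top$ consists of the holomorphic sections of the normal bundle, whose $\kappa$-invariant part has dimension governed by the codimension $n-2$. Feeding the genus contribution from $\Lambda^{1,0}\widetilde\Sigma_c$ (of degree $2\tilde\gamma-2$) and this kernel into the halved count, and re-expressing everything through $\tilde\gamma=\gamma-1$, should collect the constant into $\gamma+n-1$. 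Branch points only enlarge the degree with the correct sign in the Osserman compactification (see Remark~\ref{rem:efbms-ho} for the analogous point), so they cause no loss.

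The main obstacle is this last bookkeeping. One must (i) identify precisely which holomorphic bundle on $\widetilde\Sigma_c$ controls the $\kappa$-invariant negative-plus-null space in codimension $n-2$, (ii) verify that the bundle is non-special (its degree exceeds $2\tilde\gamma-2$, which should follow from the Chern--Osserman inequality) so that Riemann--Roch yields a clean upper bound with no stray $h^1$-term, and (iii) check that the antiholomorphic involution cuts each complex dimension down to exactly one real dimension without gaining or losing a direction --- the delicate interplay of the reality condition with Serre duality. Getting the constant to be \emph{exactly} $\gamma+n-1$, rather than merely of this order, is where the careful work lies; the reduction to the double cover and the halving of the curvature term are, by contrast, routine given Proposition~\ref{pr:fs-ub}.
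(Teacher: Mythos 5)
Your skeleton --- pass to the orientation double cover, run the Ejiri--Micallef argument equivariantly with respect to the deck involution, and finish with Riemann--Roch --- is exactly the paper's strategy, and the reality-of-pairings mechanism you import from Proposition~\ref{pr:fs-ub} could in principle be made to work: since $\kappa$ is an antiholomorphic isometry, pairings of $\kappa$-invariant normal fields with a $\kappa$-adapted basis of holomorphic sections are indeed real or purely imaginary, which cuts the number of real orthogonality conditions from $2\dim_\C H^0(\tau^{0,1}\otimes\Lambda^{1,0}\widetilde\Sigma_c)$ down to $\dim_\C H^0(\tau^{0,1}\otimes\Lambda^{1,0}\widetilde\Sigma_c)$, achieving the same halving that the paper gets by restricting to $\iota$-invariant holomorphic sections.

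However, two of your concrete claims are wrong, and the steps you defer as ``bookkeeping'' are precisely where the proof lives. First, the genus: in this theorem $\gamma$ is the genus of the orientable double cover (this is the paper's convention --- it is why the appendix records $\gamma=0$ for $\mathbb{RP}^2$ and $\gamma=1$ for the Klein bottle), so $\widetilde\Sigma_c$ has genus $\gamma$, not $\gamma-1$. Your assertions that $\tilde\gamma=\gamma-1$ and that the count collects into $\gamma+n-1$ are mutually inconsistent: Riemann--Roch on a genus-$\tilde\gamma$ curve gives the obstruction space complex dimension $-\frac{1}{2\pi}\TC(\widetilde\Sigma)+\tilde\gamma-1=-\frac{1}{\pi}\TC(\Sigma)+\tilde\gamma-1$, so with your convention you would land on the constant $\gamma+n-2$, not $\gamma+n-1$. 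Second, the additive term $n$ does not come from ``holomorphic sections of the normal bundle'' with ``dimension governed by the codimension $n-2$'': in the Ejiri--Micallef scheme the extra term is the energy index plus energy nullity of the compactified cover, where $\Ind_E=0$ because $\delta^2 E\geqslant 0$, and $\Nul_E\leqslant n$ because the $\delta^2E$-null fields are precisely the constant vector fields in $\E^n$; your item (i) is therefore aimed at the wrong object. Finally, the dimension count you postpone in items (ii)--(iii) is the actual content of the paper's argument: it proves that the invariant and anti-invariant parts of $H^0(\tau^{0,1}\otimes\Lambda^{1,0}\widetilde\Sigma_c)$ have equal real dimensions by applying the Lefschetz fixed point theorem to the fixed-point-free involution together with Serre duality, computes their sum by Riemann--Roch, and eliminates the potential $h^1$-term by observing that $\deg\tau^{1,0}=\frac{1}{2\pi}\TC(\widetilde\Sigma)<0$ forces $H^0(\tau^{1,0})=0$. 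None of these three computations appears in your proposal, so as it stands it is an outline of the correct strategy with the decisive steps missing and the two miscounts above.
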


\begin{proof}
Throughout the proof we use the notation of~\S\ref{sec:fs-ub}. Pass to the orientable two-sheeted cover $\widetilde\Sigma$ of $\Sigma$. Let $\widetilde\Sigma_c$ be the Huber-Osserman compactification of $\widetilde\Sigma$ and define the bundles $\tau,\nu$ over $\widetilde\Sigma$ as in~\S\ref{sec:fs-ub}. If $\widetilde\Sigma$ has branch points, then the bundle $\tau$ is twisted at the branch points on amount equal to the order of branching. Then, again, the quadratic forms $\delta^2 A$ and $\delta^2 E$ extend to $\Gamma(\nu)$ and $\Gamma(\widetilde\Sigma_c\times\R^n)$ and~\eqref{eq:fs-sigmacind} holds. Let $\iota$ be the involution changing the orientation of $\widetilde\Sigma$. Consider the space of $\iota-$invariant holomorphic sections of the bundle $\tau^{0,1}\otimes\Lambda^{1,0}\widetilde\Sigma$ that we denote by $H^0_\iota(\tau^{0,1}\otimes\Lambda^{1,0}\widetilde\Sigma_c)$. As in the proof of \cite[Theorem 1.1]{ejiri2008comparison} one can show that
$$
\Ind(\Sigma)+\Nul(\Sigma)=\Ind(\Sigma_c)+\Nul(\Sigma_c) \leqslant \Ind_E(\Sigma_c)+\Nul_E(\Sigma_c)+h^0_\iota,
$$
where $h^0_\iota=\dim_{\mathbb R} H^0_\iota(\tau^{0,1}\otimes\Lambda^{1,0}\widetilde\Sigma_c)$. One also obviously gets that
$$
\Ind_E(\Sigma_c)\leqslant \Ind_E(\widetilde\Sigma_c)=\Ind_E(\widetilde\Sigma)=0.
$$
Hence $\Ind_E(\Sigma)=0$. One also has that $\Nul_E(\Sigma)\leqslant \Nul_E(\widetilde\Sigma)=n$ (see \S\ref{sec:def}). Hence,
\begin{gather}\label{main}
\Ind(\Sigma)+\Nul(\Sigma) \leqslant h^0_\iota+n.
\end{gather}
Now the question is to estimate $h^0_\iota$. Notice that any section $s\in H^0(\tau^{0,1}\otimes\Lambda^{1,0}\widetilde\Sigma_c)$ can be decomposed into the sum of the $\iota-$invariant and the $\iota-$antiinvariant parts in the following way
$$
s(x)=\frac{s(x)+s(\iota(x))}{2}+\frac{s(x)-s(\iota(x))}{2},~x\in \widetilde\Sigma_c.
$$
Let us denote the space of $\iota-$antiinvariant sections of $H^0(\tau^{0,1}\otimes\Lambda^{1,0}\widetilde\Sigma_c)$ as $H^0_a(\tau^{0,1}\otimes\Lambda^{1,0}\widetilde\Sigma_c)$. Then one has
$$
H^0(\tau^{0,1}\otimes\Lambda^{1,0}\widetilde\Sigma_c)=H^0_\iota(\tau^{0,1}\otimes\Lambda^{1,0}\widetilde\Sigma_c)\oplus H^0_a(\tau^{0,1}\otimes\Lambda^{1,0}\widetilde\Sigma_c).
$$
Let $\dim_{\mathbb R} H^0(\tau^{0,1}\otimes\Lambda^{1,0}\widetilde\Sigma_c)=2h^0, \dim_{\mathbb R}H^0_a(\tau^{0,1}\otimes\Lambda^{1,0}\widetilde\Sigma_c)=h^0_a$. Then we have
$$
2h^0=h^0_\iota+h^0_a.
$$
By the Riemann-Roch Theorem one has
$$
\dim_{\mathbb C}H^0(\tau^{1,0})-\frac{1}{2}(h^0_\iota+h^0_a)=\deg(\tau^{1,0})+1-\gamma.
$$
Obviously, $\dim_{\mathbb C}H^0(T^{1,0}\widetilde\Sigma)=0$ since $c_1(T^{1,0}\widetilde\Sigma)=\frac{1}{2\pi}\TC(\widetilde\Sigma)<0$.  Then
\begin{gather*}
\frac{1}{2}(h^0_\iota+h^0_a)=-\frac{1}{2\pi}\TC(\widetilde\Sigma)+\gamma-1.
\end{gather*}
Therefore, we have 
\begin{gather}\label{sum}
h^0_\iota+h^0_a=-\frac{1}{\pi}\TC(\widetilde\Sigma)+2\gamma-2.
\end{gather}
In order to get the second equality on $h^0_\iota$ and $h^0_a$ we use the Lefschetz Fixed Point Theorem. Since the involution $\iota\colon \widetilde\Sigma_c \to \widetilde\Sigma_c$ has no fixed points then the Lefschetz Fixed Point Theorem implies
\begin{gather}\label{Lefschetz}
0=\tr \iota^*|_{H^0(\widetilde\Sigma_c,\mathcal T^{0,1}\otimes\Omega^{1,0}\widetilde\Sigma_c)}-\tr \iota^*|_{H^1(\widetilde\Sigma_c,\mathcal T^{0,1}\otimes\Omega^{1,0}\widetilde\Sigma_c)},
\end{gather}
where $\iota^*|_{H^q(\widetilde\Sigma_c,\mathcal T^{0,1}\otimes\Omega^{1,0}\widetilde\Sigma_c)}$ is the induced action of $\iota$ on the sheaf cohomology group $H^q(\widetilde\Sigma_c,\mathcal T^{0,1}\otimes\Omega^{1,0}\widetilde\Sigma_c)$. Choosing the basis in $H^0(\widetilde\Sigma_c,\mathcal T^{0,1}\otimes\Omega^{1,0}\widetilde\Sigma_c)=H^0(\tau^{0,1}\otimes\Lambda^{1,0}\widetilde\Sigma_c)$ consisting of $h^0_\iota$ $\iota-$invariant sections and $h^0_a$ $\iota-$antiinvariant sections one may easily see that 
$$
\tr \iota^*|_{H^0(\widetilde\Sigma_c,\mathcal T^{0,1}\otimes\Omega^{1,0}\widetilde\Sigma_c)}=h^0_\iota-h^0_a.
$$
Indeed, the $\iota-$invariant sections are those who have eigenvalue $+1$ with respect to the action $\iota^*$ and the $\iota-$antiinvariant sections are those who have eigenvalue $-1$ with respect to the action $\iota^*$. Further, by the Serre duality one has 
$$
H^1(\widetilde\Sigma_c,\mathcal T^{0,1}\otimes\Omega^{1,0}\widetilde\Sigma_c)\cong H^0(\widetilde\Sigma_c, \mathcal T^{0,1})^*. 
$$
Further we see that $H^0(\widetilde\Sigma_c,\mathcal T^{0,1})=H^0(\tau^{0,1})=0$ as we have already discussed. Hence, $H^1(\widetilde\Sigma_c,\mathcal T^{0,1}\otimes\Omega^{1,0}\widetilde\Sigma_c)=0$ and 
$$
\tr \iota^*|_{H^1(\widetilde\Sigma_c,\mathcal T^{0,1}\otimes\Omega^{1,0}\widetilde\Sigma_c)}=0.
$$
Then \eqref{Lefschetz} takes the form
\begin{gather}\label{Lefschetz1}
h^0_\iota-h^0_a=0.
\end{gather}
Equalities \eqref{Lefschetz1} and \eqref{sum} then imply
$$
h^0_\iota=-\frac{1}{2\pi}\TC(\widetilde\Sigma)+\gamma-1.
$$
Substituting the latter into \eqref{main} we get
$$
\Ind(\Sigma)+\Nul(\Sigma) \leqslant -\frac{1}{2\pi}\TC(\widetilde\Sigma)+\gamma-1+n=-\frac{1}{\pi}\TC(\Sigma)+\gamma+n-1.
$$
\end{proof}

\subsection{Ejiri-Micallef type inequalities for non-orientable surfaces} In this section we state a result analogous to the Ejiri-Micallef inequalities~\cite[Theorem 1.1]{ejiri2008comparison} for non-orientable surfaces. This result was used in~\cite[Section 6]{karpukhin2021stability} in the case when $\Sigma$ is the Klein bottle $\tilde\tau_{3,1}$ (the bipolar Lawson surface) in the round sphere $\mathbb S^4$.

\begin{theorem}
Let $\Sigma$ be a (possibly branched) immersed closed  non-orientable minimal surface of genus $\gamma$ in a Riemannian manifold $(M,g)$. Then
$$
\Ind_E(\Sigma) \leqslant \Ind(\Sigma) \leqslant \Ind_E(\Sigma)+r, 
$$
where, if $b$ is the number of branch points of the immersion counted with multiplicity, then
$$
r=\begin{cases} 3\gamma-3-b, &\text{if $b\leqslant \gamma-2$},\\
                          2\gamma-1-b, &\text{if $\gamma-1\leqslant b \leqslant 2\gamma-2$},\\
                          0, &\text{if $b \geqslant 2\gamma-1$}. 
                          \end{cases}
$$
Particularly, if $\gamma=0$, i.e. $\Sigma$ is $\mathbb RP^2$ then $r=0$. If $\gamma=1$, i.e. $\Sigma$ is $\mathbb{KL}$ then $r=1$ if $b=0$ and $r=0$ if $b>0$.
\end{theorem}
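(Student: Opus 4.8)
The plan is to follow the proof of Theorem~\ref{EMM} almost verbatim, the two new features being that the ambient manifold is arbitrary (so $\Ind_E(\Sigma)$ need no longer vanish and must be carried along) and that the immersion may branch (so the degrees below acquire branch-divisor corrections). First I would pass to the orientable two-sheeted cover $\pi\colon\widetilde\Sigma\to\Sigma$, a closed orientable surface of genus $\gamma$ carrying the fixed-point-free, orientation-reversing deck involution $\iota$. The $b$ branch points of $u$ (counted with multiplicity) pull back to an $\iota$-invariant branch divisor $D$ on $\widetilde\Sigma$ of degree $2b$, the two sheets over each branch point of $\Sigma$ being interchanged by $\iota$. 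Defining $\tau,\nu$ and $\mathcal L=\tau^{0,1}\otimes\Lambda^{1,0}\widetilde\Sigma$ as in \S\ref{sec:fs-ub}, the branching twists the tangent bundle so that $\tau^{1,0}\cong T^{1,0}\widetilde\Sigma\otimes\mathcal O(D)$; hence $\deg\tau^{1,0}=2-2\gamma+2b$ and, since $\mathcal L\cong(\tau^{1,0})^{*}\otimes K$ with $K$ the canonical bundle of $\widetilde\Sigma$, one has $\deg\mathcal L=4\gamma-4-2b$.

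Next I would reproduce the comparison of $\delta^2A$ and $\delta^2E$ from \cite[Theorem~1.1]{ejiri2008comparison}, restricted throughout to the $\iota$-invariant part of $\Gamma(\nu)$, exactly as the forms are restricted in the proof of Theorem~\ref{EMM}. This is legitimate because $\iota$ commutes with $L$ and preserves both quadratic forms, and because $\iota$-invariant normal fields on $\widetilde\Sigma$ are precisely the normal fields on $\Sigma$. The lower bound $\Ind_E(\Sigma)\leqslant\Ind(\Sigma)$ is the easy direction: the normal projection of an $\iota$-invariant admissible energy-decreasing variation is an area-decreasing normal variation. For the upper bound the Ejiri--Micallef argument yields
$$
\Ind(\Sigma)\leqslant\Ind_E(\Sigma)+h^0_\iota,\qquad h^0_\iota:=\dim_{\R}H^0_\iota(\mathcal L),
$$
where $H^0_\iota(\mathcal L)$ is the space of $\iota$-invariant holomorphic sections of $\mathcal L$; as $\iota$ acts antiholomorphically and freely, $h^0_\iota$ equals $\dim_{\C}H^0(\mathcal L)$.

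It remains to bound $h^0_\iota$ by $r$, and here the three cases appear according to the position of $\deg\tau^{1,0}=2-2\gamma+2b$ relative to the critical values $0$ and $\deg K=2\gamma-2$, i.e. according to whether $b\leqslant\gamma-2$, $\gamma-1\leqslant b\leqslant 2\gamma-2$, or $b\geqslant2\gamma-1$. When $b\geqslant 2\gamma-1$ one has $\deg\mathcal L=4\gamma-4-2b<0$, so $H^0(\mathcal L)=0$ and $r=0$. When $b\leqslant\gamma-2$ one has $\deg\tau^{1,0}<0$, so $H^0(\tau^{1,0})=0$ and, by Serre duality, $H^1(\mathcal L)\cong H^0(\tau^{1,0})^{*}=0$; then Riemann--Roch gives $\dim_\C H^0(\mathcal L)=3\gamma-3-2b\leqslant 3\gamma-3-b$. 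In the intermediate range $\tau^{1,0}$ is effective and \emph{special}, and I would invoke Clifford's theorem $\dim_\C H^0(\tau^{1,0})\leqslant 1+\tfrac12\deg\tau^{1,0}$ together with the Serre duality $\dim_\C H^0(\mathcal L)=\dim_\C H^1(\tau^{1,0})$ and Riemann--Roch for $\tau^{1,0}$ to obtain $\dim_\C H^0(\mathcal L)\leqslant 2\gamma-1-b$, which is exactly the bound claimed in this regime; the non-effective sub-case is disposed of directly by Riemann--Roch.

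The main obstacle is the careful bookkeeping in the two non-vanishing regimes. First, one must justify the identification $h^0_\iota=\dim_\C H^0(\mathcal L)$ and the precise form of the comparison for a \emph{non-orientable} surface in a general manifold: this is where the antiholomorphic, fixed-point-free involution enters, and where the Lefschetz fixed point theorem used in Theorem~\ref{EMM} must be adapted to the branched, $\iota$-equivariant setting in order to control the splitting of $H^0(\mathcal L)$ and of $H^0(\tau^{1,0})$ into invariant and anti-invariant parts. Second, the application of Clifford's theorem requires that $\tau^{1,0}$ be effective and special in the middle range, which has to be checked, and one must confirm that the boundary degrees $0$ and $2\gamma-2$ really produce the transitions at $b=\gamma-1$ and $b=2\gamma-1$. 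Finally, I would treat the degenerate low-genus cases $\gamma\in\{0,1\}$ (where $\Sigma$ is $\mathbb{RP}^2$ or the Klein bottle) directly and verify $r\geqslant0$ throughout, completing the estimate $\Ind(\Sigma)\leqslant\Ind_E(\Sigma)+r$.
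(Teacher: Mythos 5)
Your proposal follows essentially the same route as the paper's proof: pass to the orientable double cover, run the Ejiri--Micallef comparison of $\delta^2 A$ and $\delta^2 E$ on the $\iota$-invariant part of $\Gamma(\nu)$ to obtain $\Ind(\Sigma)\leqslant\Ind_E(\Sigma)+h^0_\iota$, identify $h^0_\iota$ with $\dim_{\C}H^0(\mathcal L)$, and then bound that dimension by Riemann--Roch, Serre duality and Clifford's theorem in the same three ranges of $b$ (your degree bookkeeping $\deg\tau^{1,0}=2-2\gamma+2b$, $\deg\mathcal L=4\gamma-4-2b$ matches the paper's $\tilde b=2b$ computation, including the sharper value $3\gamma-3-2b$ in the first range). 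The only cosmetic deviations are that you justify $h^0_\iota=\dim_{\C}H^0(\mathcal L)$ directly from the conjugate-linearity of the induced involution, where the paper invokes the Lefschetz fixed point theorem together with Serre duality, and that you apply Clifford to $\tau^{1,0}$ instead of to $\mathcal L$; these are equivalent under Serre duality.
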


\begin{proof}
The proof is analogous to the proof of Theorem~\ref{EMM}. First we show that 
$$
\Ind(\Sigma) \leqslant \Ind_E\Sigma+h^0_\iota,
$$
where $h^0_\iota=\dim_{\mathbb R} H^0_\iota(\tau^{0,1}\otimes\Lambda^{1,0}\widetilde\Sigma)$ is the dimension of the space of the $\iota-$invariant sections of the bundle $\tau^{1,0}\otimes\Lambda^{1,0}\widetilde\Sigma$, $\tau$ is the tangent bundle over $\widetilde\Sigma$ twisted at the branch points, and $\widetilde\Sigma$ is the orientable cover of $\Sigma$. The lower bound $\Ind_E(\Sigma) \leqslant \Ind(\Sigma)$ is trivial. Next, using the Riemann-Roch Theorem, the Lefschetz Fixed Point Theorem, and the Serre duality we show that $h^0_\iota=h^0$, where $2h^0=\dim_{\mathbb R} H^0(\Sigma,\tau^{0,1}\otimes\Lambda^{1,0}\widetilde\Sigma)$. As it was computed in the proof of~\cite[Theorem 1.1]{ejiri2008comparison} by the Riemann-Roch Theorem
$$
h^0=3\gamma-3-\tilde b+\dim_{\mathbb C} H^0(\tau^{0,1}),
$$
where $\tilde b=2b$ is the number of branched points on $\widetilde\Sigma$. If $\tilde b \leqslant 2\gamma-3$ then $c_1(\tau^{0,1})<0$ hence $\dim_{\mathbb C} H^0(\tau^{0,1})=0$ and $h^0=3\gamma-3-\tilde b$. If $\tilde b\geqslant 4\gamma-3$ then $c_1(\tau^{0,1}\otimes\Lambda^{1,0}\widetilde\Sigma)<0$ hence $h^0=0$. If $2\gamma-2 \leqslant \tilde b\leqslant 4\gamma-4$ then $0\leqslant c_1(\tau^{0,1}\otimes\Lambda^{1,0}\widetilde\Sigma)\leqslant 2\gamma-2$ and by Clifford's theorem $h^0\leqslant \left[\frac{4\gamma-2-\tilde b}{2}\right]$.
\end{proof}

\end{appendices}

\bibliographystyle{alpha}
\bibliography{mybib}
\end{document}